\DeclareSymbolFont{cyrletters}{OT2}{wncyr}{m}{n}
\DeclareMathSymbol{\Sha}{\mathalpha}{cyrletters}{"58}
\font\teneufm=eufm10 \font\seveneufm=eufm7
\font\fiveeufm=eufm5
\let\goth\mathfrak
\def\cA{\mathcal A}
\def\cB{\mathcal B}
\def\cF{\mathcal F}
\def\cQ{\mathcal Q}
\def\GG{\mathbb{G}}
\def\NN{\mathbb{N}}
\def\FF{\mathbb{F}}
\def\gG{\goth G}
\def\gY{\goth Y}
\def\gX{\goth X}
\def\gZ{\goth Z}
\def\1{\mbox{\bf 1}}
\def\ug{\underline{g}}
\def\uG{\underline{G}}
\newcommand{\Mat}{{\operatorname{M}}}
\DeclareMathOperator{\supp}{supp} 
 \DeclareMathOperator{\Hom}{Hom}
\DeclareMathOperator{\Aut}{Aut}
\DeclareMathOperator{\GL}{\rm GL}
\DeclareMathOperator{\SL}{\rm SL}
\newcommand{\incl}[1][r]
{\ar@<-0.2pc>@{^(-}[#1] \ar@<+0.2pc>@{-}[#1]}
\newcommand{\ru}{{\mathcal R}_{u,k}} 
\newcommand{\ra}{{\mathcal R}_k} 
\newcommand{\rs}{{\mathcal R}_{s,k}} 
\newcommand{\rus}{{\mathcal R}_{us, k}} 
\newtheorem{theorem}{Theorem}[subsection]
\newtheorem{stheorem}{Theorem}[section]
\newtheorem{sclaim}[stheorem]{Claim}
\newtheorem{scorollary}[stheorem]{Corollary}
\newtheorem{slemma}[stheorem]{Lemma}
\newtheorem{sproposition}[stheorem]{Proposition}
\newtheorem{sremark}[stheorem]{Remark}
\newtheorem{sremarks}[stheorem]{Remarks}
\newtheorem{sexample}[stheorem]{Example}
\newtheorem{sexamples}[stheorem]{Examples}
\newtheorem{sdefinition}[stheorem]{Definition}
\theoremstyle{definition}
\newtheorem{remark}[theorem]{Remark}
\newtheorem{remarks}[theorem]{Remarks}
\numberwithin{equation}{section}
\newcounter{nc}
\renewcommand{\thenc}{{\rm(\roman{nc})}}
\newenvironment{romlist}%
{\begin{list}{\thenc}{
\usecounter{nc} 
\parsep=0pt
\setlength  \labelwidth{\leftmargin}
\addtolength\labelwidth{-\labelsep}
}
}{\end{list}}
\newcounter{nnc}
\renewcommand{\thennc}{{\rm(\alph{nnc})}}
{\begin{list}{\thennc}{
\usecounter{nnc}
\parsep=0pt
\setlength  \labelwidth{\leftmargin}
\addtolength\labelwidth{-\labelsep}
}
}{\end{list}} 
\newcommand{\pauseromlist}%
{\global\edef\savecount{\arabic{nc}}\end{romlist}}
\newcommand{\finpauseromlist}%
{\begin{romlist}\setcounter{nc}{\savecount}}
\newcounter{ctnum}
\renewcommand{\thectnum}{\textup{(\arabic{ctnum})}}
\newenvironment{numlist}%
{\begin{list}{\thectnum}{
\usecounter{ctnum} 
\parsep=0pt
\leftmargin=0pt%
\setlength{\itemindent}{\labelwidth}%
\addtolength{\itemindent}{\labelsep}%
}
}{\end{list}}
\def\NN{\mathbb{N}}
\def\ZZ{\mathbb{Z}}
\def\PP{\mathbb{P}}
\def\gE{\mathfrak{E}}
\def\gF{\mathfrak{F}}
\def\gG{\mathfrak{G}}
\def\gQ{\mathfrak{Q}}
\def\RR{\mathbb{R}}
\def\QQ{\mathbb{Q}}
\def\lra{\longrightarrow}
\def\ol{\overline}
\def\mod{\text{\rm mod}}
\def\2int{\mathop{2\int}\nolimits}
\def\Spec{\mathop{\rm Spec}\nolimits}
\def\Hom{\mathop{\rm Hom}\nolimits}
\def\Inf{\mathop{\rm Inf}\nolimits}
\def\mod{\mathop{\rm mod}\nolimits}
\def\supp{\mathop{\rm supp}\nolimits}
\def\Aut{\text{\rm{Aut}}}
\def\resp.{\mathop{\rm resp.}\nolimits}
\def\limproj{\mathop{\oalign{lim\cr
\hidewidth$\longleftarrow$\hidewidth\cr}}}
\def\limind{\mathop{\oalign{lim\cr
\hidewidth$\longrightarrow$\hidewidth\cr}}}
\def\Res{\mathop{\rm Res}\nolimits}
\def\res{\mathop{\rm res}\nolimits}
\def\lgr{\longrightarrow}
\font\math=cmmi10
\def\varpi{\hbox{\math\char'44}}
\def\simlgr{\buildrel\sim\over\lgr}
\def\pa{\S\kern.15em }
\def\un{\uppercase\expandafter{\romannumeral 1}}
\def\deux{\uppercase\expandafter{\romannumeral 2}}
\def\trois{\uppercase\expandafter{\romannumeral 3}}
\def\quatre{\uppercase\expandafter{\romannumeral 4}}
\def\cinq{\uppercase\expandafter{\romannumeral 5}}
\def\six{\uppercase\expandafter{\romannumeral 6}}
\def\hfl#1#2#3{\smash{\mathop{\hbox to#3{\rightarrowfill}}\limits
^{\scriptstyle#1}_{\scriptstyle#2}}}
\def\gfl#1#2#3{\smash{\mathop{\hbox to#3{\leftarrowfill}}\limits
^{\scriptstyle#1}_{\scriptstyle#2}}}
\title[Residues]{Residues on Affine Grassmannians }
\author{M. Florence}\address{Institut de 
Math\'ematiques de Jussieu, Universit\'e Paris 6, place Jussieu, 
75005 Paris, France}
\email{mathieu.florence@imj-prg.fr}
\author{P. Gille}\address{UMR 5208
Institut Camille Jordan - Universit\'e Claude Bernard Lyon 1
43 boulevard du 11 novembre 1918,
69622 Villeurbanne cedex - France 
}
\thanks{The authors are  supported  by the project ANR Geolie, ANR-15-CE
40-0012, (The French National Research Agency).
}
\email{gille@math.univ-lyon1.fr}
\date{\today}
\begin{document}

\maketitle

 \begin{abstract} Given a linear group $G$ over a field $k$, we define a notion of 
 index and residue of an element $g \in G\bigl(k(\!(t)\!) \bigr)$.
 The index $r(g)$ is a rational number  and the residue a group homomorphism
 $\res(g): \GG_a {\enskip or \enskip} \GG_m \to G$.
 This provides an alternative proof of  Gabber's theorem
 stating that $G$ has no subgroups isomorphic to $\GG_a$ or
 $\GG_m$ iff $G(k[[t]])= G(k(\!(t)\!))$.
 In the case of a reductive group, we offer an explicit connection 
 with the theory of affine grassmannians. 
 
\smallskip

\noindent {\em Keywords:} Group schemes, residues, affine grassmannians.  \\

\noindent {\em MSC 2000: 14M15, 20G35} 
\end{abstract}

{\small \tableofcontents }

\bigskip

\section{Introduction}

Let $k$ be a field and let $G$ be a linear algebraic $k$--group.
Our goal is to  associate in a quite elementary way 
to each element $g \in G(k(\!(t)\!)) \setminus G(k[[t]])$
an index $r(g) \in \QQ_{\geq 0}$ and a non-trivial  homomorphism
$\res(g): \GG_a \to G$ (resp.\,  a non-trivial homomorphism
$\res(g): \GG_m \to G$) if $r(g)>0$ (resp.\,  $r(g)=0$).
If $k$ is of characteristic zero (resp.\ $p>0$), 
we show that the index $r(g)$ is integral (resp.\ belongs to 
$\ZZ_{(p)}$), see Corollary  \ref{cor_main}.

This construction works actually over any ring $A$ 
with a closed subgroup scheme of $\SL_N$.
This permits to recover results by Gabber \cite{O} on the characterization of 
wound $k$--groups (i.e. $G$ does not contain $\GG_a$ or $\GG_m$)
and   also provides an  extension in the group scheme setting.
 This extends also to points $x\in X(k(\!(t)\!)) \setminus X(k[[t]])$ for $X$ a $G$--torsor;
 this is a key ingredient in the proof of the following result.

\begin{stheorem} (see Theorem \ref{thm_main2})
Let $X$ be a $G$--torsor such that $X(k(\!(t)\!)) \not = \emptyset$.
Then $X(k) \not=\emptyset$.
\end{stheorem}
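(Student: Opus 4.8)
The plan is to reduce the statement to an integrality question over the complete discrete valuation ring $k[[t]]$ and then run a descent on the index. First I would observe that it suffices to prove $X(k[[t]]) \neq \emptyset$: the reduction map $k[[t]] \to k$, $t \mapsto 0$, induces a specialization $X(k[[t]]) \to X(k)$ (precompose a morphism $\Spec k[[t]] \to X$ over $k$ with the closed point $\Spec k \hookrightarrow \Spec k[[t]]$), so any integral point yields a rational point. Base-changing $X$ along $k \hookrightarrow k[[t]]$, the problem becomes: the $G$-torsor $X_{k[[t]]}$ over $k[[t]]$ has a $k(\!(t)\!)$-point by hypothesis (its generic fibre is trivial), and we want a section over $k[[t]]$.

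Next I would use that $G(k(\!(t)\!))$ acts simply transitively on $X(k(\!(t)\!))$, together with the residue machinery extended to torsor points (as announced just before the statement). Fix $x \in X(k(\!(t)\!))$. If $x \in X(k[[t]])$ we are done by the first step. Otherwise $x$ has an index $r(x) \in \QQ_{\geq 0}$ and a nontrivial residue $\res(x)$, a homomorphism $\GG_a \to G$ when $r(x)>0$ and $\GG_m \to G$ when $r(x)=0$. The point is that $\res(x)$ records the leading singularity of $x$ at $t=0$: feeding a suitable power of $t$ into this one-parameter subgroup produces an element $h \in G(k(\!(t)\!))$ whose leading term matches that of $x$, so that the translate $x' = h^{-1}\cdot x \in X(k(\!(t)\!))$ has strictly smaller index (or becomes integral). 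Concretely, in the $\GG_m$-case one translates by $\res(x)(t)^{-1}$ to clear the pole; in the $\GG_a$-case one translates by $\res(x)(c\,t^{-m})^{-1}$ with $m$ dictated by $r(x)$.

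Because the index is bounded below by $0$ and takes values in a discrete subset of $\QQ_{\geq 0}$ — by the integrality statement (Corollary \ref{cor_main}): in $\ZZ_{\geq 0}$ in characteristic $0$, and with denominators controlled by the fixed embedding $G \hookrightarrow \SL_N$ in characteristic $p$ — there exists a point $x$ of minimal index. If such $x$ were not integral, the previous step would produce a point of strictly smaller index, a contradiction. Hence $x \in X(k[[t]])$, and specializing $t \mapsto 0$ gives the desired point of $X(k)$.

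The main obstacle is the descent step: one must prove that the residue genuinely captures the leading term of $x$, so that the translate $x'$ has strictly smaller index, and one must do this uniformly across the $\GG_a$ (unipotent, $r(x)>0$) and $\GG_m$ (semisimple, $r(x)=0$) cases. This is precisely where the explicit construction of $\res$, its equivariance under the simply transitive $G(k(\!(t)\!))$-action, and the discreteness needed for termination (extracted from the $\SL_N$-embedding) all enter. The remaining point requiring care is transporting index and residue from the group $G$ to the torsor $X$ so that they are well defined and equivariant on torsor points; this should follow the group case closely once a trivialization of $X_{k(\!(t)\!)}$ is fixed.
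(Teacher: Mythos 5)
Your strategy---pick a point of minimal index and use the residue to produce a translate of strictly smaller index---is exactly the argument the paper runs for Theorem \ref{thm_commutative}, and there it is only carried out for $G$ \emph{commutative} over a $\QQ$-algebra. For general $G$ the descent step has a genuine gap. If $x'=x\cdot h$ with $h\in G(K)$, then $\sigma_r(x')=x'\,h^{-1}g_r(x)\sigma_r(h)$, so the new cocycle is $h^{-1}g_r(x)\sigma_r(h)$; only when $G$ is commutative can you rewrite this as $g_r(x)\cdot\bigl(h^{-1}\sigma_r(h)\bigr)$ and cancel leading terms by choosing $h$ from the residue. Without commutativity the residue of $x$ and the coboundary of $h$ do not interact in the way your ``match the leading singularity'' step requires, and the paper's own Example \ref{example_GL2}(c) exhibits a $g$ with $r(g)=0$ for which the translate $g\,\res(g)(t^{-1})$ has \emph{strictly larger} index (one must translate on the other side, which is not available for a torsor point, where $G$ acts on a fixed side). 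Termination is also problematic: in characteristic $p$ the index lies in $\ZZ[\frac1p]$ (Corollary \ref{cor_main}(2) only controls $p^s r(g)$ for an $s$ depending on $g$), which is not discrete in $\QQ_{\geq0}$, so a point of minimal index need not exist.

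The paper's actual proof takes a structural route instead (Proposition \ref{prop_parabolic} and the d\'evissage of Lemma \ref{lem_proper}): reduce to the affine case, quotient by the split solvable radical $R$ (which has trivial $H^1$), and show that $E/R$ is $k$-pseudo-complete by fibering it over the pseudo-complete variety $E/P$ for a minimal pseudo-parabolic $P$, the fibre group $P/R$ being $k$-wound. The residue machinery enters only through the wound case (Corollary \ref{cor_wound_tors}), where a single non-integral point already contradicts woundness and no induction on the index is needed. If you want to salvage your approach, you would at least need to restrict to commutative $G$ in characteristic $0$, or supply a new argument for the one-step decrease of the index and for termination in the general case.
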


For reductive groups, this statement is due 
to Bruhat-Tits (see \cite[I.1.2.1]{Gi} or  \cite[prop. 5.5]{Gu}).
The generalization of  that  statement over a ring
is known for $\GL_n$ and for tori according to recent results
by Bouthier-\v{C}esnavi\v{c}ius \cite[2.1.17, 3.1.7]{BC};
we generalize it as well  for wound closed subgroup schemes of $\SL_N$ 
(Cor. \ref{cor_wound_tors}) and for  $G$ commutative under
further assumptions (Th. \ref{thm_commutative}).
It is an open question beyond those cases.

Already over a field  it is an open question whether the statement does generalize
to homogeneous spaces; this is the case in characteristic $0$ according
to results by  the first author \cite{F}.

If $G$ is split reductive, we relate our construction of index and 
residue to the affine grassmannians $\cQ_G$.
The index provides a refinement of the stratification 
of the affine grasmannian $\cQ_G$ of $G$ (\S \ref{subsec_ind}).
In particular we show that an element \break $g \in G(k(\!(t)\!)) \setminus G(k[[t]])$
has index $0$ if and only if $g=g_1 \mu(t) g_2$ where
$g_1 \in G(k)$, $g_2 \in G(k[[t]])$ for some homomorphism
$\mu: \GG_m \to G$, see Proposition \ref{prop_fixed}.

\medskip

It is a pleasure to thank Ofer Gabber and Laurent Moret-Bailly for useful conversations.
We thank  also Simon Riche and  Xinwen Zhu for 
their expertise on affine grassmannians and Alexis Bouthier
for his reading.
Finallly we would like to thank the reviewers for their helpful comments.

\section{Indices}

\subsection{Notations and conventions}
If $r \in \QQ^\times$, the notation $r=m/n$ means
that $(m,n)=1$ with $n \geq 1$.
This extends to $0=0/1$.

For each ring $A$ (commutative, unital), we denote by $A^u=A[u]$
the ring of $A$-polynomials in the indeterminate $u$.
We denote by $A[[t]]$ the ring of power series
and define $A(\!(t)\!)=A[[t]][x]/(1-tx)$.
For each non-negative integer $n \geq 1$,
we define  $A[[t^{1/n}]]= A[[t]][y]/(y^n-t)$ and 
$A(\!(t^{1/n})\!)= A[[t]][x,y]/(y^n-t, 1-xy)$.
We have natural maps $A[[t^{1/n}]] \to A[[t^{1/mn}]]$
and  $A(\!(t^{1/n})\!) \to A(\!(t^{1/mn})\!)$ for $m \geq 1$.

If $r =m/n \in \QQ_{\geq 0}$, we put 
 $A_r= A^u[[t^{1/n}]]$ and
$\cA_r=A^u(\!(t^{1/n})\!)$.
We have a specialization homomorphism $j: A_r \to A^u$.

For each $r=m/n \in \QQ_{\geq 0}$, the assignment $t \to t(1+u t^r)$
defines ring homomorphisms $\sigma_r: A^u[[t]] \to A_r$; 
if $r>0$, its extends to $\sigma_r: A^u(\!(t)\!) \to \cA_r$.

Inverting $\lambda:= 1+u$, 
we come now to analogues $A^{u,+}= A[u][z]/ (1- (1+u)z))= A[\lambda, \lambda^{-1}]$.
 We have the variants $A^+_r = A^{u,+}[[t^{1/n}]]$;
$\cA^{u,+}_r=A^{u,+}(\!(t^{1/n})\!)$, $\sigma_r:  A^{u,+}(\!(t)\!) \to \cA^+_r$,
$t \mapsto  t(1+u t^r)$,  and  the specialization
$j^+: A^{u,+}_r \to A^{u,+}$ for all $r \in \QQ_{\geq 0}$.

\subsection{The ramification index}

Let $A$ be a ring and  let $G$ be an affine 
$A$--group scheme equipped with a closed embedding $\rho: G \to \SL_{N,A}$.

\begin{sproposition}\label{prop_index}
Let $g \in G\bigl(A(\!(t)\!)\bigr) \setminus G\bigl(A[[t]]\bigr)$. 
\begin{numlist}
 \item The set 
$$
\Sigma(g)= \Bigl\{ r \in \QQ_{>0} \, \mid g^{-1} \sigma_r(g) \in G(A_r) \Bigr\}
$$
is non-empty and let $r(g)$ be its lower bound in $\RR$.
Then $r(g) \in \QQ_{\geq 0}$ and \break
${\Sigma(g)= \QQ_{>0} \cap [r(g), + \infty[}$.

\item Assume that $r(g)>0$. Then  
 $j\bigl(g^{-1} \,  \sigma_{r(g)}(g)\bigr)$ belongs
to $G(A^{u}) \setminus G(A)$.

\item Assume that $r(g)=0$. Then 
$g^{-1} \sigma_0(g)  \in G(A^{u,+}[[t]])$ and 
 $j\bigl( g^{-1} \,  \sigma_0(g) \bigr)$ belongs
to $G(A^{u,+}) \setminus G(A)$.

\item Assume that $r(g)=m/n>0$. 
Let $\sigma: A[[t]] \to A_{r(g)}$ be a homomorphism such that 
$\sigma(t)=t(1 +  ut^{r} +  P_2(u) t^{m_2/n}+ \dots)$ 
with $m<m_2 <m_3< \dots < \dots$ and $P_i(u)\in A^u$ for $i \geq 2$.
Then $g^{-1} \sigma(g)$ belongs to  
$G(A_{r(g)})$ and $j\bigl( g^{-1} \sigma(g)\bigr)= 
j\bigl( g^{-1} \sigma_r(g)\bigr)$.

\end{numlist}
\end{sproposition}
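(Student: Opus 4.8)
The plan is to transport everything into $\SL_N$ through the closed embedding $\rho$, writing $M:=\rho(g)\in\SL_N\bigl(A(\!(t)\!)\bigr)$. Because $\det M=1$ the inverse $M^{-1}$ is the adjugate of $M$, so $g\in G\bigl(A[[t]]\bigr)$ if and only if $M\in\Mat_N\bigl(A[[t]]\bigr)$ (closedness of $G$ lets one test its defining equations over the subring $A[[t]]\subset A(\!(t)\!)$); thus the hypothesis $g\notin G\bigl(A[[t]]\bigr)$ just says $\ord_t(M)<0$. The engine of the whole argument is a Hasse--Schmidt (divided power) expansion, valid over any $A$ and in any characteristic: for a substitution $\sigma(t)=t+\delta$ with $\ord_t(\delta)>1$ one has, entrywise,
\[
\sigma(M)=\sum_{k\ge 0}D^{(k)}M\cdot\delta^{k},\qquad g^{-1}\sigma(g)=I+\sum_{k\ge 1}B_k\,\delta^{k},\quad B_k:=M^{-1}\,D^{(k)}M,
\]
where $D^{(k)}t^{i}=\binom{i}{k}t^{i-k}$ and $\ord_t\bigl(D^{(k)}M\bigr)\ge\ord_t(M)-k$. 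For $\sigma=\sigma_r$ with $r>0$ we have $\delta=ut^{1+r}$, so
\[
g^{-1}\sigma_r(g)=I+\sum_{k\ge 1}B_k\,u^{k}t^{(1+r)k}.
\]
Since the $k$-th term is the only one carrying the power $u^{k}$, there is no cancellation between different $k$, and hence this element lies in $G(A_r)$ \emph{if and only if} $\ord_t(B_k)+(1+r)k\ge 0$ for every $k\ge 1$, i.e. $r\ge\frac{-\ord_t(B_k)}{k}-1$.

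For part (1) I would set $\rho_0:=\sup_{k\ge 1}\bigl(\frac{-\ord_t(B_k)}{k}-1\bigr)$. Subadditivity of $\ord_t$ applied to $B_k=M^{-1}D^{(k)}M$ gives $\ord_t(B_k)\ge\ord_t(M^{-1})+\ord_t(M)-k=-c-k$ with $c:=-\ord_t(M^{-1})-\ord_t(M)\ge 0$, whence $\frac{-\ord_t(B_k)}{k}-1\le\frac{c}{k}\to 0$. Consequently $\rho_0\le c<\infty$, the supremum is either non-positive or attained at one of the finitely many $k$ for which the term is positive, and $\rho_0>-\infty$ unless all $B_k$ vanish (which would force $M$ constant, contradicting $\ord_t(M)<0$). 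The criterion above now reads $\Sigma(g)=\QQ_{>0}\cap[\rho_0,\infty)$; in particular $\Sigma(g)\supseteq\QQ_{>0}\cap[c,\infty)\ne\emptyset$, and $r(g)=\max(\rho_0,0)\in\QQ_{\ge 0}$ with the stated half-line description.

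Parts (2) and (4) then follow from the same expansion by applying the specialization $j$ (that is, $t\mapsto 0$). In part (2), $r(g)=\rho_0>0$ and $j$ annihilates every summand of positive $t$-order, retaining exactly the $k$ that attain $\rho_0$, so $j\bigl(g^{-1}\sigma_{r(g)}(g)\bigr)=I+\sum_{k:\,\ord_t(B_k)+(1+r)k=0}(B_k)_{\ord_t(B_k)}\,u^{k}$, a nontrivial element carrying a genuine power $u^{k_0}$ with $k_0\ge 1$; it lies in $G(A^u)$ and, being non-constant in $u$, not in $G(A)$. Part (4) is the robustness of this under higher-order perturbation: if $\delta=ut^{1+r}+P_2(u)t^{1+m_2/n}+\cdots$ then $\delta^{k}=u^{k}t^{(1+r)k}+(\text{strictly higher }t\text{-order})$, so $g^{-1}\sigma(g)$ differs from $g^{-1}\sigma_r(g)$ only by terms of strictly positive $t^{1/n}$-order; these are integral (so $g^{-1}\sigma(g)\in G(A_{r(g)})$) and are killed by $j$, giving $j\bigl(g^{-1}\sigma(g)\bigr)=j\bigl(g^{-1}\sigma_r(g)\bigr)$.

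The delicate case, and the step I expect to be the main obstacle, is part (3), where $r(g)=0$, i.e. $\rho_0\le 0$. Here $\sigma_0(t)=(1+u)t=\lambda t$, and because $\sigma_0(t^{-1})=\lambda^{-1}t^{-1}$ forces one to invert $\lambda$, the natural coefficient ring becomes $A^{u,+}=A[\lambda,\lambda^{-1}]$ --- this is exactly why the $\lambda$-invertible variant is introduced. The inequality $\ord_t(B_k)\ge-k$ for all $k$ (equivalent to $\rho_0\le 0$) shows that $g^{-1}\sigma_0(g)=I+\sum_{k\ge 1}B_k(ut)^{k}$ is integral over $A^{u,+}[[t]]$. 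What remains --- and is genuinely harder than the previous parts --- is to prove that a \emph{nonzero} pole forces $\rho_0=0$ to be \emph{attained}, so that the residue is non-trivial. I would argue on leading coefficients: with $i_0:=\ord_t(M)<0$ and $M_{i_0}\ne 0$, the leading term of $D^{(k)}M$ is $\binom{i_0}{k}M_{i_0}t^{i_0-k}$, so $\ord_t(B_k)=-k$ as soon as $\binom{i_0}{k}M_{i_0}\ne 0$; since $(1+t)^{i_0}\ne 1$ in $A[[t]]$ for $i_0<0$ in every characteristic, some binomial $\binom{i_0}{k}$ with $k\ge 1$ is a nonzero integer. Making this cancellation-of-leading-terms argument robust over an arbitrary ring $A$, where a priori $\binom{i_0}{k}$ might annihilate $M_{i_0}$, is the technical heart of (3); granting it, $j\bigl(g^{-1}\sigma_0(g)\bigr)=I+\sum_{k:\,\ord_t(B_k)=-k}(B_k)_{-k}\,u^{k}$ lies in $G(A^{u,+})\setminus G(A)$, completing the proof.
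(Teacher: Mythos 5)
Your treatment of parts (1), (2) and (4) is sound and is essentially the paper's own argument in different notation: the Hasse--Schmidt expansion $g^{-1}\sigma_r(g)=I+\sum_{k\ge 1}B_k\,u^k t^{(1+r)k}$ is the same bookkeeping as the expansion \eqref{eq_coeff} in the coefficients $c^{a,b}_{i,j}$ (your $k$ is the paper's $b$, and $\ord_t(B_k)$ records the minimal $a-Nd$), the observation that distinct $k$ carry distinct powers of $u$ and hence cannot cancel is used identically, and your bound $\rho_0\le c$ plays the role of the paper's inequality $f_g(r)\ge r+f_g(0)$. (One small caveat even here: your claim that ``all $B_k=0$ forces $M$ constant'' needs an argument over an arbitrary ring, since for a negative exponent $i$ the relations $\binom{i}{k}a_i=0$ only give $a_i=0$ because $\gcd_k\binom{i}{k}=1$; the paper sidesteps this by working with $\ug=t^dg$, whose exponents are non-negative, and with $\det\ug=t^{Nd}$.)

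The genuine gap is part (3), and it is not just the technicality you flag: the leading-coefficient strategy you sketch cannot work as stated. What must be produced is a nonzero coefficient $[t^{-k}]B_k$, and under the hypothesis $r(g)=0$ (so $\ord_t(B_k)\ge -k$ for all $k$) this is the \emph{leading} coefficient of $B_k$ only when $\ord_t(B_k)$ equals $-k$ exactly --- which is what you are trying to prove. Your computation implicitly assumes $\ord_t(M^{-1})+\ord_t(M)=0$, which fails in general (already $M=\mathrm{diag}(t^{-1},t)$ gives $-2$), and even where the orders do add up, the product of the leading matrices $M^{-1}_{j_0}\cdot\binom{i_0}{k}M_{i_0}$ can vanish by matrix cancellation while both factors are nonzero; so exhibiting a nonvanishing binomial $\binom{i_0}{k}$ does not exhibit a nonzero $[t^{-k}]B_k$. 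The paper closes this case by an entirely different device: assuming the residue is constant, i.e.\ $j\bigl(g^{-1}\sigma_0(g)\bigr)=M\in\SL_N(A)$, it writes $g^{-1}g(\lambda t)=M+\lambda^{-d}\bigl(tM_1+t^2M_2+\cdots\bigr)$ and applies the ring homomorphism $\sum_i P_i(\lambda)t^i\mapsto\sum_i P_i(t)t^{di}$ (morally $\lambda\mapsto t$, $t\mapsto t^d$), which yields $g(t^{d+1})=g(t^d)Q$ with $Q\in\Mat_N(A[[t]])$; surmultiplicativity of the gauge then forces $-d(d+1)\ge -d^2$, a contradiction. You should either import this specialization trick or find a genuinely different completion; as written, part (3) is unproved.
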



\begin{proof}
(1) Clearly the statement reduces to the case of
$\SL_N$. 
Our assumption implies that 
$g=t^{-d} \ug$ with $d \geq 1$ and $\ug \in \Mat_N(A[[t]])
\setminus t \Mat_N(A[[t]])$. 
The number $-d$ is
called the gauge in $t$ of the matrix $g \in \Mat_N( A(\!(t)\!))$
and is denoted\footnote{The gauge is not multiplicative 
but is surmultiplicative, i.e. $V_t(g_1 g_2) \geq V_t(g_1)+ V_t(g_2)$.} by $V_t(g)$.
It follows that $\det(\ug)=t^{Nd}$.
For $r \in \QQ_{>0}$,  we have

\begin{equation}\label{formula0}
g^{-1} \sigma_r(g)= \frac{t^d}{t^d(1+u t^r)^d} \,\,  \ug^{-1} \,  \sigma_r(\ug)
= (1+u t^r)^{-d} \, \,  \ug^{-1} \,  \sigma_r(\ug).
\end{equation} 
We write  $\ug= \bigl(P_{i,j}\bigr)_{i,j=1,..,N}$ with $P_{i,j} \in A[[t]]$ 
and denote by $\Delta_{i,j} \in A[[t]]$  the minor of index $(i,j)$ of $\ug$.
We have $\ug^{-1}= \Bigl( t^{-Nd} \, \Delta_{i,j} \Bigr)_{i,j=1,..,N}$ so that the  
$(i,j)$--coefficient $C_{i,j,r}$ of $\ug^{-1} \,  \sigma_r(\ug)$
is
\begin{equation}\label{formula1}
C_{i,j,r}= t^{-Nd} \, \sum\limits_{k=1}^N
\Delta_{i,k}(t) \,  P_{k,j}(t (1+u t^r))\in A_r. 
\end{equation}
When $u=0$, $C_{i,j,r}$ specializes on  $\delta_{i,j}$ so that 
\begin{equation}\label{formula2}
C_{i,j,r} = \delta_{i,j} \, + \, t^{-Nd} \, \sum\limits_{k=1}^N \Delta_{i,k}(t) \,  
\bigl( P_{k,j}(t (1+u t^r)) - P_{k,j}(t) \bigr).
\end{equation}

\noindent We consider the identity

\begin{equation}\label{eq_coeff_epsilon}
\sum\limits_{k=1}^N \Delta_{i,k}(t) \,  
\bigl( P_{k,j}(t (1+ \epsilon)) - P_{k,j}(t) \bigr)
= \sum\limits_{a \geq 0, \\ \, b \geq 1} \, 
c^{a,b}_{i,j} \, t^{a} \,  \epsilon^{b}
\end{equation}
with $c^{a,b}_{i,j} \in A$.
Taking $\epsilon= u t^r$, we get

\begin{equation}\label{eq_coeff}
C_{i,j,r} = \delta_{i,j} \, + \,  t^{-Nd} \, \sum\limits_{a \geq 0, \\ \, b \geq 1} \, 
c^{a,b}_{i,j} \, t^{a+ r b} \,  u^{b}
\end{equation}
We consider the sets $\supp(i,j)= \bigl\{  (a,b) 
\, \mid \,  c^{a,b}_{i,j} \not =0  \, \}$
and $\supp(g)=\bigcup\limits_{(i,j)} \supp(i,j)$.

\begin{sclaim} \label{claim_supp} $\supp(g) \not = \emptyset$.
\end{sclaim}

If $\supp(g) = \emptyset$, then  $\ug=\sigma_r(\ug)$ and all coefficients
of $\ug$ belong to $A$ which contradicts the fact $\det(\ug)=t^{Nd}$.
The Claim is established and enables us to define the function 
$$
f_g(r)= \Inf\Bigl\{ -Nd +a+r b \,  \mid \, (a,b) \in  \supp(g)  \Bigr\}.
$$
We have $f_g(r) \geq r+ f_g(0)$ so that $f$ admits  positive  values
and $\Sigma(g)$ is not empty. Since $1+u t^r \in A_r^\times$, 
the set  $\Sigma(g)$ consists in the positive rational numbers $r$
such that $\ug^{-1} \,  \sigma_r(\ug)$ belongs to $\GL_N(A_r)$.
We get that
$$
\Sigma(g)= \bigl\{ r \in \QQ_{>0} \, \mid \, f_g(r) \geq 0 \bigr\}.
$$
By definition of $r(g)$, we have
$$
r(g)= \Inf\bigl\{ r \in \QQ_{> 0} \, \mid \, f_g(r) \geq 0 \bigr\} 
\in \RR_{\geq 0}.
$$
If $f_g(0) \geq 0$, then $r(g)=0$.
If $f_g(0) <0$, then there exists $a,b$ such that ${-Nd +a+r(g) b=0}$ 
whence $r(g) \in \QQ_{>0}$.
In both cases, we have  $\Sigma(g)= \QQ_{>0} \cap [r(g), + \infty[$.

\smallskip
 
\noindent (2) Along the proof of (1),
we have seen that then there exists $a,b$ such that ${-Nd +a+r b=0}$ and  $c^{a,b}_{i,j} \not =0$. 
Formula \eqref{eq_coeff} shows that $j\bigl(g^{-1} \, 
\sigma_r(g)\bigr) \not \in \SL_N(A)$.

\smallskip
 
\noindent (3) Once again, it is enough to consider the case of  $\SL_{N,A}$.
We recall the notation $A_1=A^u[[t]]$ and $\cA_1= \cA^u(\!(t)\!)$.
If $r(g)=0$, we have $a-Nd \geq 0$ for 
each $a$ occurring in  formula \eqref{eq_coeff} (more precisely such that $c^{a,b}_{i,j} \not =0$
for some $b$).
The point is that the computation of (1) works also for $r=0$.
It follows that 
$$
\ug^{-1} \sigma_0(\ug) - I_N \in u \Mat_N(A_1)
$$
so that $\ug^{-1} \sigma_0(\ug)  \in  \Mat_N\bigl(A_1 \bigr)$.
Taking into account the identity \eqref{eq_coeff} for $r=0$, we get 
\begin{equation}\label{eq_coeff0}
g^{-1} \sigma_0(g) = \lambda^{-d}  \ug^{-1} \sigma_0(\ug)
= \lambda^{-d} [I_N + u M_0(u) + t M_1(u)+ \dots]   \in \Mat_N\bigl( A[\lambda,\lambda^{-1}][[t]] \bigr) .
\end{equation}
with $M_i(u) \in \Mat_N(A[u])$ for $i=1,2,..$.
We have $j\bigl(g^{-1} \,  \sigma_0(g)\bigr)= \lambda^{-d}  \bigl( I_N + u M_0(u)\bigr)$. 
Assume that $j\bigl(g^{-1} \,  \sigma_0(g)\bigr)=M \in \SL_N(A)$. 
The formula \eqref{eq_coeff0} above reads
\begin{equation}\label{eq_coeff2}
g^{-1} \sigma_0(g)  = M
+ \lambda^{-d} \, \Bigl( t M_1(u)+ t^2 M_2(u)  + \dots \Bigr) \,  \in
\, \Mat_N\bigl( A[\lambda,\lambda^{-1}][[t]] \bigr) .
\end{equation} 
We consider the subring $B= \Bigr(A[\lambda][[t]]\Bigr)[\lambda^{-1}]$
of $A[\lambda, \lambda^{-1}][[t]]$ and its analogue \break $\cB= \Bigl(A[\lambda](\!(t)\!)\Bigr)[\lambda^{-1}]
\subset A[\lambda, \lambda^{-1}](\!(t)\!)$. 
The map $\sigma_0: A(\!(t)\!) \to A[\lambda, \lambda^{-1}](\!(t)\!)$,
$t \mapsto \lambda t$, factorizes through $\cB$.
It follows that  the equation \eqref{eq_coeff2} holds in $\Mat_N(\cB)$, i.e.
\begin{equation}\label{eq_coeff3}
g^{-1} g(\lambda t) = M
+ \lambda^{-d} [ t M_1(u)+ t^2 M_2(u)  + \dots]  
\in \Mat_N\bigl( \cB \bigr) .
\end{equation}
In other words we have
\begin{equation}\label{eq_coeff4}
g^{-1} g(\lambda t)  
= M+ \lambda^{-d} [ t M_1(\lambda-1)+ t^2 M_2(\lambda-1)  + \dots] 
\in \Mat_N\bigl( \cB \bigr) .
\end{equation}
The homomorphism $A[\lambda](\!(t)\!) \to A(\!(t)\!)$,
$\sum_{i \geq -L} P_i(\lambda) t^i \mapsto \sum_{i \geq -L} P_i(t) t^{d i} $
extends uniquely to a homomorphism  $\varphi: \cB \to A(\!(t)\!)$.
Specializing the equation \eqref{eq_coeff3} by $\varphi$  
yields that  $g^{-1}(t^d) g\bigl( t^{d+1}  \bigr)  \in 
 \Mat_N\bigl( A[[t]]\bigr)$ hence $$
g( t^{1+d} ) =  g(t^{d}) Q \hbox{\enskip with \enskip} Q \in \Mat_N\bigl(A[[t]]\bigr).
$$
It follows that $V_t( g( t^{1+d} ) )=  V_t( g(t^{d}) Q) \geq  V_t( g(t^{d})) +V_t(Q)
\geq V_t( g(t^{d}))$, so that 
$-d(d+1)  \geq - d^2$, this is a contradiction. 
We conclude that 
$j\bigl(g^{-1} \,  \sigma_0(g)\bigr) \not \in \SL_N(A)$. 

\smallskip
 
\noindent (4) We write $r=r(g)>0$ for short and continue to work with $\SL_{N,A}$.
We denote by $(\widetilde C_{i,j})$
the entries of $\ug^{-1} \, \sigma(\ug)$. 
Taking $\epsilon=u t^r+P_2(u) t^{m_2/r}+ 
\dots$, we have

\begin{eqnarray}\label{eq_coeff_bis} \nonumber 
\widetilde C_{i,j}  &=& \delta_{i,j} \, + \,   
t^{-Nd} \, \sum\limits_{a \geq 0,\, b \geq 1} \, 
c^{a,b}_{i,j} \, t^{a} \,  \bigl(u t^r + P_2(u) t^{m_2/n} + \dots \bigr)^b \\ \nonumber 
&=&  C_{i,j,r} \, + \,
t^{-Nd}\sum\limits_{a \geq 0,  \, b \geq 1} \, 
c^{a,b}_{i,j} \, t^{a} \, \bigl( u^b t^{b r} 
+ b u^{b-1} P_2(u) t^{\frac{(b-1)m+m_2}{n}} + \mbox{upper terms} \bigr).
\end{eqnarray} 
For each $(a,b) \in \supp(i,j)$, we have $-Nd+ a+ b r \geq 0$, so that
$-Nd+ a+ \frac{(b-1)m+m_2}{n} > 0$. 
Since $\ug^{-1} \, \sigma_r(\ug)$ belongs to $G(A_r)$, it follows that  
$\ug^{-1} \, \sigma(\ug)$ belongs to $G(A_r)$ 
and so does $g^{-1} \, \sigma(g)$. Furthermore the above computation
shows that $\ug^{-1} \, \sigma(\ug)= \ug^{-1} \, \sigma_r(\ug)$ 
in $\Mat_N(A_r)$ modulo $t^{\frac{m_2-m}{n}}$. Thus
$j\bigl( g^{-1} \sigma(g)\bigr)= 
j\bigl( g^{-1} \sigma_r(g)\bigr)$.
\end{proof}

\begin{sremark}\label{rem_F(u)}{\rm 
If $A$ is a field, by inspection of the proof, we see
that $$
r(g)= \Inf\Bigl\{ r \in \QQ_{>0} \, \mid g^{-1} \sigma_r(g) \in
G\bigl( A(u)[[t]] \bigr) \,\Bigr\}.
$$
 }
\end{sremark}

For later use, we record the following consequence of the proof
of Proposition \ref{prop_index}.

\begin{slemma}\label{lem_record} The assumptions are those 
of Proposition \ref{prop_index}. 
Let $M$ be a positive integer.

\begin{numlist}
 \item  Assume that  $r= r(g)=\frac{m}{n}$.
Then we have $$
 g^{-1} \, \sigma_s(g) \in \ker\Bigl( G\bigl(A^u[[t^{1/n}]]) \to 
 G\bigl(A^u[t^{1/n}]/t^{\frac{M}{n}}) \Bigr)
 $$
 for all $s=\frac{u}{n}$ with $u \geq m+M$.
 
 \item If $r(g)=0$ we have $$
 g^{-1} \sigma_s(g) \in \ker\Bigl( G\bigl(A^u[[t]]) \to 
 G\bigl(A^u[t]/t^M) \Bigr)
 $$
 for each  integers   $s \geq M$.
 \end{numlist}
\end{slemma}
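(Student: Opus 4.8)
The plan is to read off both statements from formula~\eqref{eq_coeff} and the support bookkeeping already set up in the proof of Proposition~\ref{prop_index}; no new idea is needed beyond a single exponent estimate. As in that proof, it suffices to treat $G=\SL_{N,A}$: since $\rho$ is a closed immersion, $G(R)\to\SL_N(R)$ is injective compatibly with the reduction maps modulo $t^{M/n}$ (resp.\ $t^M$), so an element of $G(\,\cdot\,)$ reduces to the identity over $G$ precisely when it does over $\SL_N$. Writing $g=t^{-d}\ug$ as before, for $s\in\QQ_{>0}$ one has $g^{-1}\sigma_s(g)=(1+ut^s)^{-d}\,\ug^{-1}\sigma_s(\ug)$, and the $(i,j)$-entry of $\ug^{-1}\sigma_s(\ug)$ is
\[
C_{i,j,s}=\delta_{i,j}+t^{-Nd}\sum_{(a,b)\in\supp(i,j)}c^{a,b}_{i,j}\,t^{a+sb}\,u^{b}.
\]

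For part (1), write $r=r(g)=m/n$. Then $r\in\Sigma(g)$, so $-Nd+a+rb\ge 0$ for every $(a,b)\in\supp(g)$. For $s=\ell/n$ with $\ell\ge m+M$ I would estimate each monomial exponent as
\[
-Nd+a+sb=(-Nd+a+rb)+(s-r)b\ \ge\ (s-r)b\ \ge\ \tfrac{M}{n},
\]
using $s-r=(\ell-m)/n\ge M/n$ and $b\ge 1$. Hence $\ug^{-1}\sigma_s(\ug)\equiv I_N \pmod{t^{M/n}}$; and since $\ell\ge M$ gives $(1+ut^s)^{-d}\equiv 1\pmod{t^{M/n}}$, the product reduces to $I_N$ modulo $t^{M/n}$. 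Finally $s\ge r(g)$ puts $s\in\Sigma(g)$, so by Proposition~\ref{prop_index}(1) the element $g^{-1}\sigma_s(g)$ lies in $G(A_s)\subseteq G(A^u[[t^{1/n}]])$, hence in the claimed kernel.

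Part (2) is the identical computation at $r(g)=0$. Here $f_g(0)\ge 0$ forces $-Nd+a\ge 0$ for every $(a,b)\in\supp(g)$, so for an integer $s\ge M$ each exponent satisfies $-Nd+a+sb\ge sb\ge s\ge M$, giving $\ug^{-1}\sigma_s(\ug)\equiv I_N\pmod{t^M}$; and $(1+ut^s)^{-d}\equiv 1\pmod{t^M}$ since $s\ge M$. As $s$ is a positive integer, $A_s=A^u[[t]]$ and $g^{-1}\sigma_s(g)\in G(A^u[[t]])$, so it lands in $\ker\!\big(G(A^u[[t]])\to G(A^u[t]/t^M)\big)$.

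I expect no genuine obstacle: the whole argument is the bound $-Nd+a+sb\ge M/n$ (resp.\ $\ge M$), which simply separates the index contribution $-Nd+a+rb\ge0$ (resp.\ $\ge0$) from the excess $(s-r)b$ (resp.\ $sb$). The only points deserving a word are that the scalar prefactor $(1+ut^s)^{-d}$ also becomes $1$ in the relevant truncation, and that membership in, and reduction of, the kernel over $G$ are governed by the same over $\SL_N$ through the closed immersion $\rho$; both are immediate.
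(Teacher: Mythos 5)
Your proof is correct and follows essentially the same route as the paper's: both split the exponent as $(-Nd+a+rb)+(s-r)b$ (resp.\ $(-Nd+a)+sb$), use the nonnegativity of the first summand coming from $r\in\Sigma(g)$ (resp.\ $f_g(0)\ge 0$), and bound the excess by $M/n$ (resp.\ $M$). Your explicit handling of the prefactor $(1+ut^s)^{-d}$ is a point the paper leaves implicit, but it is the same argument.
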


\begin{proof}
 (1)   The $(i,j)$--coefficient of $g^{-1} \sigma_s(g)$
 reads 
 
\begin{equation}\label{eq_coeff_record}
D_{i,j,s} = (1+ut^s)^{-d} \, \Bigl( \delta_{i,j} \, + \,  
t^{-Nd} \, \sum\limits_{a \geq 0, \\ \, b \geq 1} \, 
c^{a,b}_{i,j} \, t^{a+ s b} \,  u^{b} \Bigr).
\end{equation}
 We write $s-r= \frac{v}{n}$ with $v \geq M$ and get 
 
 \begin{equation}\label{eq_coeff_record2}
D_{i,j,s} = (1+ u t^s)^{-d} \, \Bigl( \delta_{i,j} \, + \,  
 \sum\limits_{a \geq 0, \\ \, b \geq 1} \, 
c^{a,b}_{i,j} \, t^{-Nd + a+ r b} \,  t^{\frac{v b}{n}} \,  u^{b} \Bigr).
\end{equation}
 For each  non zero $c^{a,b}_{i,j}$, we have $-Nd + a+ r b \geq 0$ 
 so that $D_{i,j,s} - \delta_{i,j} \in t^{\frac{v}{n}} A^u[[t^{\frac{1}{n}}]]
 \subseteq t^{\frac{M}{n}} A^u[[t^{\frac{1}{n}}]]$.

 \smallskip
 
 \noindent (2) Let $(s,M)$  be a couple of integers satisfying  $s \geq M \geq 1$. the 
 $(i,j)$--coefficient of $g^{-1} \sigma_s(g)$
 reads 
 
 \begin{equation}\label{eq_coeff_record3}
D_{i,j,s} = (1+ u t^s)^{-d} \, \Bigl( \delta_{i,j} \, + \,  
 \sum\limits_{a \geq 0, \\ \, b \geq 1} \, 
c^{a,b}_{i,j} \, t^{-Nd + a} \,  t^{b s} \,  u^{b} \Bigr).
\end{equation}
 For each  non zero $c^{a,b}_{i,j}$, we have $-Nd + a \geq 0$ 
 so that $D_{i,j,s} - \delta_{i,j} \in t^{s} A^u[[t]] \subseteq t^{M} A^u[[t]]$.
 \end{proof}

\begin{sdefinition} {\rm Let $g \in G\bigl(A(\!(t)\!) \bigr)$.
If  $g \not \in G\bigl(A[[t]] \bigr)$, we define the {\it ramification index}
$r(g)$ as in Proposition 
\ref{prop_index}.
If  $g \in G\bigl(A[[t]] \bigr)$, we define $r(g)=-1$.
}
\end{sdefinition}

It is straightforward to check that the index does not depend of the
choice of the representation $\rho$.

\begin{slemma}\label{lem_0} We have
$$
\Bigl\{g \in G\bigl(A(\!(t)\!) \bigr) \, \mid \, r(g) \leq 0 \Bigr\} =  
\Bigl\{ g  \in G\bigl(A(\!(t)\!) \bigr)  \, \mid \,   g^{-1} \, g(\lambda t)
\in G\bigl( A[\lambda, \lambda^{-1}][[t]] \bigr)   \Bigr\} .
$$

\end{slemma}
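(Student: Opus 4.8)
The plan is to reduce to $G=\SL_{N}$ and then extract both inclusions directly from the computation performed in the proof of Proposition~\ref{prop_index}. First I would note that, since $\lambda=1+u$ is a unit in $A^{u,+}=A[\lambda,\lambda^{-1}]$, the homomorphism $\sigma_{0}$ is given by $t\mapsto\lambda t$, so that $g^{-1}g(\lambda t)=g^{-1}\sigma_{0}(g)$ and $A^{u,+}[[t]]=A[\lambda,\lambda^{-1}][[t]]$. Both conditions appearing in the statement are detected in $\SL_{N}$: the index is representation-independent and agrees with the one computed for $\rho(g)$, while membership $g^{-1}\sigma_{0}(g)\in G\bigl(A[\lambda,\lambda^{-1}][[t]]\bigr)$ is equivalent to $g^{-1}\sigma_{0}(g)\in\SL_{N}\bigl(A[\lambda,\lambda^{-1}][[t]]\bigr)$, because $\rho$ is a closed immersion and $A[\lambda,\lambda^{-1}][[t]]\hookrightarrow A[\lambda,\lambda^{-1}](\!(t)\!)$ is injective. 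Hence I may assume $G=\SL_{N}$ throughout.

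For the inclusion $\subseteq$ I would distinguish the two possible values of $r(g)\le 0$. If $r(g)=-1$, i.e. $g\in\SL_{N}(A[[t]])$, then $g^{-1}\in\SL_{N}(A[[t]])$ and $g(\lambda t)\in\SL_{N}(A[\lambda][[t]])$, so the product is integral. If $r(g)=0$, the required membership $g^{-1}\sigma_{0}(g)\in\SL_{N}(A^{u,+}[[t]])$ is precisely Proposition~\ref{prop_index}(3). This places the left-hand set inside the right-hand one.

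For the reverse inclusion I would argue by contraposition: supposing $g\notin\SL_{N}(A[[t]])$ and $r(g)>0$, I must show $g^{-1}\sigma_{0}(g)\notin\SL_{N}(A[\lambda,\lambda^{-1}][[t]])$. Writing $g=t^{-d}\ug$ as before and specialising formula~\eqref{eq_coeff} at $r=0$, the $(i,j)$-entry of $\ug^{-1}\sigma_{0}(\ug)$ is $C_{i,j,0}=\delta_{i,j}+\sum_{(a,b)}c^{a,b}_{i,j}\,t^{a-Nd}\,u^{b}$, and $g^{-1}\sigma_{0}(g)=\lambda^{-d}\,(C_{i,j,0})_{i,j}$. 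As $\lambda^{-d}$ is a unit of $A[\lambda,\lambda^{-1}][[t]]$ and the matrix has determinant $1$, integrality is equivalent to every $C_{i,j,0}$ lying in $A[\lambda,\lambda^{-1}][[t]]$, i.e. having no negative power of $t$. The step I expect to be the crux is the absence of cancellation: the coefficient of a fixed power $t^{-\ell}$ in $C_{i,j,0}$ is $\sum_{b}c^{a,b}_{i,j}u^{b}$ with $a=Nd-\ell$, a polynomial in $u=\lambda-1$ whose distinct monomials $u^{b}$ cannot cancel, hence nonzero as soon as one $c^{a,b}_{i,j}\neq 0$. Consequently $g^{-1}\sigma_{0}(g)$ is integral if and only if $a-Nd\ge 0$ for all $(a,b)\in\supp(g)$, that is, if and only if $f_{g}(0)\ge 0$. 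Since the proof of Proposition~\ref{prop_index}(1) shows that, for $g\notin\SL_{N}(A[[t]])$, one has $f_{g}(0)\ge 0$ exactly when $r(g)=0$ and $f_{g}(0)<0$ exactly when $r(g)>0$, the assumption $r(g)>0$ forces some negative power of $t$ to survive, contradicting integrality. This yields $\supseteq$ and finishes the proof.
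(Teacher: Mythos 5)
Your proof is correct, and the forward inclusion is handled exactly as in the paper (the case $r(g)=-1$ is trivial and the case $r(g)=0$ is Proposition \ref{prop_index}.(3)). For the reverse inclusion, however, you take a genuinely different route. The paper argues directly: given $g^{-1}g(\lambda t)\in G\bigl(A[\lambda,\lambda^{-1}][[t]]\bigr)$ and any $r=m/n>0$, one specializes $\lambda\mapsto 1+ut^{r}$ (a unit of $A^{u}[[t^{1/n}]]$), which turns the hypothesis into $g^{-1}\sigma_r(g)\in G(A_r)$; since every positive rational then lies in $\Sigma(g)$, the index is $\leq 0$ by definition, with no need to reopen the matrix computation. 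You instead argue by contraposition, going back into formula \eqref{eq_coeff} at $r=0$ and showing that $f_g(0)<0$ forces a surviving negative power of $t$ because the monomials $u^{b}$ with distinct exponents $b\geq 1$ cannot cancel in $A[u]\hookrightarrow A[\lambda,\lambda^{-1}]$. Both arguments are sound (your no-cancellation step is legitimate since $c^{a,b}_{i,j}=0$ for $b>a$, so each $t$-coefficient is an honest polynomial in $u$, and $A[u]=A[\lambda]$ injects into $A[\lambda,\lambda^{-1}]$). The paper's substitution is shorter and reuses only the definition of the index; yours yields slightly more, namely the explicit equivalence, for $g\notin G(A[[t]])$, between integrality of $g^{-1}g(\lambda t)$ and the condition $f_g(0)\geq 0$, which makes visible exactly which coefficients obstruct membership in the right-hand set.
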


\begin{proof} Let  $g \in G\bigl(A(\!(t)\!) \bigr)$. 
If $g  \in G\bigl(A[[t]] \bigr)$, it is obvious that $g$ belongs to 
the right-hand side. If $g  \not \in G\bigl(A[[t]] \bigr)$ and
 $r(g) \leq 0$, we have $r(g) =0$ and $g$ belongs to 
the right-hand side according to Proposition \ref{prop_index}.(3).

Conversely we assume that $g$ belongs to 
the right-hand side, that is  $g^{-1} \, g(\lambda t)
\in G\bigl( A[\lambda, \lambda^{-1}][[t]] \bigr)$.
We are given $r=m/n>0$. Since $1+ut^r$ is invertible in 
$A^u[[t^{1/n}]]$, we can make $\lambda=1+ut^r$ so that 
$g^{-1} \, \sigma_r(g) \in G\bigl( A^u[[t]] \bigr)$.
Since it holds for each rational $r>0$,
we get that $r(g)=0$ by definition of the index.
\end{proof}

\begin{slemma}\label{lem_sorites}
\begin{numlist}
 
\item The function  $g \to  r(g)$ is right $G(A[[t]])$--invariant
(resp.\ left $G(A)$-invariant)
 and is insensible to any injective base change $A \hookrightarrow A'$.
 
 \smallskip
 
\item Let $\phi: A \to B$ be a morphism of rings. Then  $r(g_B) \leq r(g)$.
 
 \smallskip
 
 \item Let $f:G \to H$ be a homomorphism between affine $A$--group 
schemes of finite type.

\smallskip

 \begin{romlist}
\item We have $r(f(g)) \leq r(g)$.
\smallskip

\item If $A$ is integral and $f$ is proper, we have $r(f(g))= r(g)$.
 \end{romlist}

 \smallskip

 \item  Let $G_1, G_2$ be affine $A$--group schemes of finite type
and consider the $A$--group scheme $G_1 \times_A G_2$.
For $g_i \in G_i(A(\!(t)\!))$ we have $r(g_1,g_2) \leq \Inf(r(g_1), r(g_2))$.
 
\smallskip

\item  Let $d$ be a non--negative integer and consider the map
$\phi_d: A(\!(t)\!) \to A(\!(T)\!)$ defined by $\phi_d(t)=T^d$.
We consider the map $\phi_{d,*}: G\bigl( A(\!(t)\!) \bigr) \to G\bigl( A(\!(T)\!) \bigr)$.

\smallskip

 \begin{romlist}
 
\item  If $d$ is not a zero divisor in $A$,
we have $r\bigl( \phi_{d,*}(g) \bigr)= d \, r(g)$.

\smallskip

\item If $A$ is of characteristic $p>0$ and $d=p^e$, we have 
$r\bigl( \phi_d(g) \bigr)=  r(g)$.
 
 \end{romlist}
 
\end{numlist}
\end{slemma}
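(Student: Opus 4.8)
The plan rests on the set $\Sigma(g)=\{r\in\QQ_{>0}\mid g^{-1}\sigma_r(g)\in G(A_r)\}$ and its shape $\Sigma(g)=\QQ_{>0}\cap[r(g),+\infty[$ from Proposition \ref{prop_index}; since $A_r\subset\cA_r$ is the localization inverting $t^{1/n}$, membership in $G(A_r)$ is the embedding-independent condition that the coordinates of $g^{-1}\sigma_r(g)\in G(\cA_r)$ have no negative power of $t^{1/n}$, i.e. that certain coefficients in $A^u$ vanish. For part (1) I would write $(gh)^{-1}\sigma_r(gh)=h^{-1}\,(g^{-1}\sigma_r(g))\,\sigma_r(h)$ for $h\in G(A[[t]])$; as $h^{-1}$ and $\sigma_r(h)$ lie in $G(A_r)$ this gives $\Sigma(gh)=\Sigma(g)$ (the reverse inclusion upon replacing $h$ by $h^{-1}$), i.e. right $G(A[[t]])$-invariance. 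For $\gamma\in G(A)$ the map $\sigma_r$ fixes $\gamma$ (being a morphism of $A^u$-algebras it is the identity on constant coordinates), so $(\gamma g)^{-1}\sigma_r(\gamma g)=g^{-1}\sigma_r(g)$ and $\Sigma(\gamma g)=\Sigma(g)$. A ring map $\phi\colon A\to B$ sends vanishing $A^u$-coefficients to vanishing $B^u$-coefficients, whence $\Sigma(g)\subseteq\Sigma(g_B)$ and $r(g_B)\le r(g)$, which is part (2); when $\phi$ is injective the implication reverses and $\Sigma(g)=\Sigma(g_B)$, giving the base-change invariance in (1). Throughout, the convention $r=-1$ on $G(A[[t]])$ is respected because membership in $G(A[[t]])$ is preserved under the operations at hand.

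For part (3)(i), naturality of the homomorphism $f$ under the ring map defining $\sigma_r$ gives $f(g)^{-1}\sigma_r(f(g))=f(g^{-1}\sigma_r(g))$, and $f$ carries $G(A_r)$ into $H(A_r)$, so $\Sigma_G(g)\subseteq\Sigma_H(f(g))$ and $r(f(g))\le r(g)$. Part (3)(ii) is the main obstacle, and here I would argue as follows. A homomorphism of affine $A$-group schemes is an affine morphism, so a proper such $f$ is finite and $\cO(G)$ is a finite $\cO(H)$-module through $f^\#$; in particular each matrix coordinate $x_{i,j}\in\cO(G)$ of the embedding $\rho$ is integral over $f^\#(\cO(H))$. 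Evaluating a monic integral equation for $x_{i,j}$ at the point $g^{-1}\sigma_r(g)\in G(\cA_r)$ produces a monic equation for the corresponding coordinate whose remaining coefficients are regular functions of $f(g)^{-1}\sigma_r(f(g))$; if $r\in\Sigma_H(f(g))$ these lie in $A_r$, so the coordinate is integral over $A_r$. It then suffices to show that $A_r$ is integrally closed in $\cA_r$: since $A$ is integral, $A^u=A[u]$ is a domain, the order in $t^{1/n}$ is additive with nonzero leading coefficients, and the lowest-term argument forbids an element integral over $A_r$ from having negative order. Hence $g^{-1}\sigma_r(g)\in G(\cA_r)\cap\SL_N(A_r)=G(A_r)$, so $\Sigma_H(f(g))\subseteq\Sigma_G(g)$, and together with (i) this gives $r(f(g))=r(g)$. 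The delicacy is exactly that this step marries the properness reduction to the hypothesis that $A$ be integral, which is what lets the $t^{1/n}$-order behave like a valuation.

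For part (4) I would use the block-diagonal embedding, under which $(g_1,g_2)^{-1}\sigma_r(g_1,g_2)=(g_1^{-1}\sigma_r(g_1),\,g_2^{-1}\sigma_r(g_2))$ lies in $(G_1\times_A G_2)(A_r)$ exactly when both entries do, so $\Sigma(g_1,g_2)=\Sigma(g_1)\cap\Sigma(g_2)$. This set contains every $r\in\QQ_{>0}$ with $r\ge\Sup(r(g_1),r(g_2))$, whence $r(g_1,g_2)\le\Sup(r(g_1),r(g_2))$ (and (3)(i) applied to the two projections gives the reverse inequality, so in fact $r(g_1,g_2)=\Sup(r(g_1),r(g_2))$). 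For part (5) I would track the support from the proof of Proposition \ref{prop_index}: writing $g=t^{-\nu}\underline g$ and recalling $f_g(s)=\Inf\{-N\nu+a+sb\mid(a,b)\in\supp(g)\}$, the substitution $t\mapsto T^d$ replaces $\underline g$ by $\underline g(T^d)$ of $T$-gauge $-Nd\nu$, and replacing $\epsilon$ by $(1+\epsilon)^d-1=d\epsilon+\cdots$ in \eqref{eq_coeff_epsilon} sends each support point $(a,b)$ into $T$-exponent $da$, the minimal $b$ at a given $a$ surviving with coefficient $d^{b}c^{a,b}_{i,j}$. When $d$ is not a zero divisor this coefficient is nonzero and a short check gives $f_{\phi_{d,*}(g)}(\varrho)=d\,f_g(\varrho/d)$, so $\Sigma(\phi_{d,*}(g))=\{\varrho\in\QQ_{>0}\mid\varrho/d\ge r(g)\}$ and $r(\phi_{d,*}(g))=d\,r(g)$, proving (i). For (ii), in characteristic $p$ one has $(1+\epsilon)^{p^e}=1+\epsilon^{p^e}$, so the substitution multiplies both exponents by $p^e$ with no cross terms, yielding $f_{\phi_{p^e}(g)}(\varrho)=p^e f_g(\varrho)$ and therefore $r(\phi_{p^e}(g))=r(g)$.
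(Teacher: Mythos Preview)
Your treatment of (1), (2), (3)(i) and (4) matches the paper's, which dismisses these as immediate from the definition of $\Sigma(g)$; you have simply made the one-line verifications explicit. (Your parenthetical for (4) is worth keeping: the argument actually yields $r(g_1,g_2)=\Sup(r(g_1),r(g_2))$, whereas the stated inequality $\le\Inf$ appears to be a misprint, since the projections together with (3)(i) already give $\Sup(r(g_1),r(g_2))\le r(g_1,g_2)$.)

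For (3)(ii) your route genuinely differs from the paper's. The paper first replaces $A$ by its fraction field using (1), so that $A(u)[[t^{1/n}]]$ is a DVR, and then invokes the valuative criterion of properness together with Remark~\ref{rem_F(u)} (which says that over a field one may test $\Sigma(g)$ with $A(u)$ in place of $A[u]$). You instead observe that a proper morphism of affine schemes is finite, hence $\cO(G)$ is integral over $f^\#\cO(H)$, and then check by a lowest-term argument that $A_r=A^u[[t^{1/n}]]$ is integrally closed in $\cA_r$ whenever $A$ is a domain. Both arguments are correct; yours avoids the detour through the fraction field and Remark~\ref{rem_F(u)}, at the cost of redoing by hand what the valuative criterion packages.

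For (5) you again take a more computational path. The paper notes that $T\mapsto T(1+uT^r)$ induces on $t=T^d$ a substitution $t\mapsto t(1+du\,t^{r/d}+\cdots)$ (respectively $t\mapsto t(1+u^{p^e}t^{r}+\cdots)$ in case (ii)) and appeals directly to Proposition~\ref{prop_index}(4). You instead track the support and the function $f_g$: your identity $f_{\phi_{d,*}(g)}(\varrho)=d\,f_g(\varrho/d)$ is correct once one checks that every new support point $(a',b')$ satisfies $a'=da$ and $b'\ge b_0(a)$, while the minimal one $(da,b_0(a))$ survives with coefficient $d^{b_0(a)}c^{a,b_0(a)}_{i,j}\ne 0$ precisely because $d$ is a non-zero-divisor. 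The paper's approach is shorter because Proposition~\ref{prop_index}(4) has already absorbed this bookkeeping.
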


\begin{proof}
(1), (2) and (3).(i) readily  follows  of the definition of $r(g)$.

\smallskip

\noindent (3)(ii): We can replace $A$ by its fraction field by (1),
allowing us to  assume that $A$ is  a field.
Since    $f:G \to H$ is assumed to be  proper,
 the valuative criterion of properness 
yields  $f^{-1}(H( A(u)[[t^r]]) \cap G\bigl(A(u)(\!(t^r)\!) \bigr)  = 
G\bigl(A(u)[[t^r]] \bigr)$ for each $r \in \QQ_{>0}$.
Then  Remark \ref{rem_F(u)} shows that 
$r(f(g))= r(g)$.

\smallskip

\noindent (4) follows from the definition of the index.

\smallskip

\noindent (5) We are given  $r=m/n \in \QQ_{>0}$. In case (i), the change 
$T \mapsto T(1+u T^r)$ induces $t=T^d \mapsto T^d(1+u T^r)^d= t( 1+ u \, d t^{\frac{m}{n d}} + \dots)$. 
According to  Proposition \ref{prop_index}.(4),
we get that $r(g) =
 \frac{r\bigl( \phi_{d,*}(g) \bigr)}{d}$. 
 
In  case (ii), we have 
$t=T^{p^e} \mapsto T^{p^e}(1+u T^r)^{p^e}= t( 1+ u^{p^e} t^{m/n} + \dots)$.
We get then $r\bigl( \phi_{d,*}(g) \bigr)= r(g)$.
 
\end{proof}

\begin{scorollary} Assume that $A$ is integral and  of characteristic $p>0$.
Let $e$ be a non-negative integer and 
let $F_e: G \to G^{(e)}$ be the $e$-iterated Frobenius morphism \cite[II.7.1.4]{DG}.
Then for each $g \in G\bigl( A(\!(t)\!) \bigr)$, we have 
$r\bigl(F_e(g) \bigr)= r(g)$. 
\end{scorollary}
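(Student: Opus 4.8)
The plan is to show that the $e$-iterated Frobenius has the same effect on the index as the substitution $t \mapsto t^{p^e}$ already treated in Lemma \ref{lem_sorites}.(5).(ii), up to a harmless Frobenius twist of the coefficients in $A$. We may assume $g \notin G(A[[t]])$, the complementary case being trivial since $F_e$ maps $G(A[[t]])$ into $G^{(e)}(A[[t]])$ and both sides then have index $-1$. Fix the closed embedding $\rho: G \hookrightarrow \SL_N$. Its Frobenius twist is $\rho^{(e)}: G^{(e)} \hookrightarrow \SL_N^{(e)} = \SL_N$, the last equality because $\det - 1$ has coefficients in the prime field; by naturality of Frobenius, $F_e$ is compatible with these embeddings and acts on coordinates as the entrywise $p^e$-power, so $F_e(g) = (g_{i,j}^{p^e})$. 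The inequality $r(F_e(g)) \le r(g)$ is already immediate from Lemma \ref{lem_sorites}.(3).(i), since $F_e$ is a homomorphism of affine $A$-group schemes of finite type; the substance is the reverse.

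First I would factor the $p^e$-power endomorphism of $A(\!(t)\!)$. Since $A$ is integral of characteristic $p$, the two maps $\chi\colon A(\!(t)\!) \to A(\!(t)\!)$, fixing $A$ and sending $t \mapsto t^{p^e}$, and $\psi\colon A(\!(t)\!) \to A(\!(t)\!)$, fixing $t$ and acting by $a \mapsto a^{p^e}$ on $A$, are injective ring homomorphisms (for $\psi$ one uses $(\sum_k x_k)^{p^e} = \sum_k x_k^{p^e}$), and the absolute $p^e$-power endomorphism of $A(\!(t)\!)$ equals $\psi \circ \chi$. Applying this entrywise gives $F_e(g) = \psi_*\bigl(\chi_*(g)\bigr)$. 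Now $\chi$ is exactly the substitution $\phi_{p^e}$ of Lemma \ref{lem_sorites}.(5) (identifying the target variable $T$ with $t$), so $\chi_*(g) \in G(A(\!(t)\!))$ and Lemma \ref{lem_sorites}.(5).(ii) yields $r(\chi_*(g)) = r(g)$. Hence it remains to prove that the coefficient-Frobenius $\psi$ preserves the index, i.e. $r(\psi_*(h)) = r(h)$ for $h := \chi_*(g)$.

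For this last step I would run the computation in the proof of Proposition \ref{prop_index} for $h$ and for $\psi_*(h)$ in parallel, working in $\SL_N$. Writing $h = t^{-d}\underline{h}$, the map $\psi$ restricts to a ring endomorphism of $A[[t]]$ fixing $t$, so $\psi_*(\underline{h})$ has the same gauge and $\det \psi_*(\underline{h}) = \psi(t^{Nd}) = t^{Nd}$. Extending $\psi$ to fix $\epsilon$ and applying it to the defining identity \eqref{eq_coeff_epsilon}, and using once more that the binomial coefficients of $(1+\epsilon)^l - 1$ lie in the prime field together with freshman's dream, I would obtain the identity \eqref{eq_coeff_epsilon} for $\psi_*(h)$ with each structure constant $c^{a,b}_{i,j}$ replaced by $(c^{a,b}_{i,j})^{p^e}$. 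As $A$ is integral, $(c^{a,b}_{i,j})^{p^e} = 0$ if and only if $c^{a,b}_{i,j} = 0$, whence $\supp(\psi_*(h)) = \supp(h)$ and therefore $r(\psi_*(h)) = r(h)$. Combining the three identifications gives $r(F_e(g)) = r(\psi_*(\chi_*(g))) = r(\chi_*(g)) = r(g)$. The main obstacle is precisely this transformation of the structure constants: one must verify that the quantities $c^{a,b}_{i,j}$ controlling the index are carried under $\psi$ to their $p^e$-th powers, so that reducedness of $A$ keeps the support, and hence the index, unchanged.
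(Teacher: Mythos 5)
Your proof is correct, but it takes a genuinely different route from the paper's. The paper disposes of the corollary in one line: $F_e$ is a finite, hence proper, homomorphism, so the equality $r(F_e(g))=r(g)$ is the special case of Lemma \ref{lem_sorites}.(3).(ii) (whose proof reduces to the fraction field and invokes the valuative criterion of properness via Remark \ref{rem_F(u)}). You instead factor the entrywise $p^e$-power map on $\Mat_N\bigl(A(\!(t)\!)\bigr)$ as the substitution $t\mapsto t^{p^e}$ followed by the coefficient Frobenius $\psi$, handle the first factor by Lemma \ref{lem_sorites}.(5).(ii), and handle the second by tracking the structure constants of \eqref{eq_coeff_epsilon}: since the binomial coefficients lie in the prime field, $\psi$ sends each $c^{a,b}_{i,j}$ to $(c^{a,b}_{i,j})^{p^e}$, and reducedness of $A$ then preserves $\supp(g)$ and hence the index. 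The paper's argument is shorter and hides the role of the hypothesis on $A$ inside the reduction to the fraction field; yours is longer but entirely explicit, stays at the level of the matrix computation of Proposition \ref{prop_index}, avoids the valuative criterion altogether, and makes visible that reducedness of $A$ (rather than integrality) is what is really used. Both are valid; your identification $F_e=\psi_*\circ\chi_*$ and the verification that $\psi$ preserves the gauge, the determinant identity $\det=t^{Nd}$, and the support are all sound.
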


\begin{proof}
Since $F_e$ is proper, this follows of Lemma \ref{lem_sorites}.(3).(ii). 
\end{proof}

\section{The residue}

Let $A$, $G$ and $g \in G(A(\!(t)\!))$ as in Proposition \ref{prop_index}.
If $r(g)>0$, we define the residue $\res(g)$ as the image of 
$g^{-1}\sigma_{r(g)}(g)$ by 
the homomorphism \break $j_*: G(A_r) \to G(A^u)=G(A[u])$.
We see it as an $A$--map $\res(g) :\GG_{a,A} =\Spec(A[u])\to \uG$
and will use sometimes the notation $\res(g)(u)$.

If $r(g)=0$,  we define the residue $\res(g)$ as the image of 
$g^{-1}\sigma_{0}(g)$ by 
the homomorphism $j_*: G(A^+_0) \to G(A^{u,+})=G(A[u, \frac{1}{1+u}])$.
Putting $\lambda=1+u$, we have $A^{u,+}=A[\lambda, \lambda^{-1}]$
so that we see the residue as an $A$--map 
$\res(g):\GG_{m,A}=\Spec(A[\lambda, \lambda^{-1}]) \to G$.
Similarly we use sometimes the notation $\res(g)(\lambda)$.

If $r(g)=-1$, i.e. $g \in G(A[[t]])$,  we put $\res(g)=1 \in G(A^u)$.
Again this does not depend of the choice of a representation.

\begin{sexamples}\label{example_basic}{\rm 
 (1) If $G=\GG_{m,A}$ and $g=\frac{1}{t^d}$, we have
 $$
 g^{-1} \, \sigma_0(g)= (1+u)^d=\lambda^d.
 $$
 In this case we have $r(g)=0$ and $\res(g)(\lambda)=\lambda^d$.
 
 \smallskip
 
 \noindent (2) If $A=k[\epsilon]$ is the ring of dual numbers and $G=\GG_{m,A}$,
 we consider the element $g = 1+ \frac{\epsilon}{t}$.
 Then $g^{-1} \sigma_r(g)=  (1- \frac{\epsilon}{t})  (1+ \frac{\epsilon}{t(1+ut^r)})
 = 1 -\epsilon u t^{r-1}$ so that $r(g)=1$.

 \smallskip
 
 \noindent (3)
 If $G=\GG_{a,A}$ and $g=\frac{1}{t^d}$ 
 with $d \in \ZZ_{\geq 1}$ invertible in $A$, we have
 $$
 g^{-1} \, \sigma_r(g)= \frac{-1}{t^d}+ \frac{1}{t^d(1 + u \, t^r)^d}
 =     \frac{-d \, u \, t^{r}+ \dots }{t^d(1 + u \, t^r)^d} 
 $$
 In this case we have $r(g)=d$ and $\res(g)(u)= -d \, u$.
 
 \smallskip
 
 \noindent (4) If $G=\GG_{a,A}$ and $g=\frac{1}{t^p}$ with $A$ of characteristic
 $p>0$  we have
 $$
 g^{-1} \, \sigma_r(g)= \frac{-1}{t^p}+ \frac{1}{t^p(1 + u t^r)^p}
= -\frac{u^p  \,  t^{rp}}{t^p(1 + u t^r)^p .
}
 $$
 In this case we have $r(g)=1$ and $\res(g)(u)= - u^p$.
 }
\end{sexamples}

\begin{sexample} \label{example_GL2} {\rm We consider the case $G=\GL_2$, and the element
$g= \begin{pmatrix}t^a& P(t) \\ 0 & t^d\end{pmatrix}$
with $P(t) \in A[t]$ and $a,d \in \ZZ$.
Putting $\epsilon = u t^{r}$, we have
$$
g^{-1} \sigma_r(g)= t^{-a-d} \, \begin{pmatrix}t^d& - P(t) \\ 0 & t^a \end{pmatrix}\, 
\begin{pmatrix} t^a(1+\epsilon)^a& P(t(1+\epsilon)) \\ 0 & t^d(1+\epsilon)^d\end{pmatrix}
=\begin{pmatrix}(1+\epsilon)^a& f \\ 0 & (1+\epsilon)^d\end{pmatrix}
$$
 with $$
 f= t^{-a-d} \Bigl( t^{d}P(t(1+\epsilon))- t^d(1+\epsilon)^d P(t) \Bigr)
 = t^{-a}\Bigl(  P(t(1+\epsilon)) - (1+\epsilon)^d P(t)   \Bigr).
 $$
\noindent (a) We take $a,d \geq 1$,  $P(t)=1$ and assume than $d$
is invertible in $A$. 
In this case, $P(t(1+\epsilon)) - (1+\epsilon)^d P(t)=
1- (1+ u t^r)^d$ so that  
$$
g^{-1} \sigma_r(g)= \begin{pmatrix}(1+ut^r)^a& -d \, t^{-a+r} u + \dots
\\ 0 & (1+ut^r)^d\end{pmatrix}
$$
It follows that  $r(g)=a$ and that 
$\res(g)= \begin{pmatrix}1& -d \, u \\ 0 & 1 \end{pmatrix}$.
 
 \smallskip
 
  \noindent (b) Assume that $A$ is an $\FF_p$--algebra and 
  take $d=p^s$ ($s \geq 1$) and $P(t)=t^{p^{ms}}$ with $m \geq 2$. Then
  $$
  f= t^{-a}\Bigl( t^{p^{ms}}(1+\epsilon^{p^{ms}}) - (1 + \epsilon^{p^s}) 
  t^{p^{ms}}\Bigr)
  =  t^{-a}\Bigl( t^{p^{ms}} (ut^r)^{p^{ms}} - t^{p^{ms}} (ut^r)^{p^{s}} \Bigr) 
  = - t^{-a + p^{ms} + r p^s} (u)^{p^{s}}+ \dots .
  $$
If  $a > p^{ms}$, we have $-a+p^{ms}+ r \, p^s =0$
 so that $r= \frac{a}{p^s} + p^{m(s-1)}$. In particular 
 $r$ can belong in $\ZZ[\frac{1}{p}] \setminus \ZZ$.
 
 \smallskip
 
 \noindent (c) For the multiplicative indeterminate 
 $\lambda$, we compute also 
 $$
g^{-1} g(\lambda t)= t^{-a-d} \, \begin{pmatrix}t^d& - P(t) \\ 0 & t^a \end{pmatrix}\, 
\begin{pmatrix} \lambda^a \, t^a & P(\lambda t) \\ 0 & \lambda^d \, t^d\end{pmatrix}
=\begin{pmatrix}\lambda^a& f \\ 0 & \lambda^d\end{pmatrix}
$$
 with $$
 f= t^{-a-d} \Bigl( t^{d}P( \lambda t)- \lambda^d t^d P(t) \Bigr)
 = t^{-a}\Bigl(  P(\lambda t) - \lambda^d P(t)   \Bigr).
 $$
 If $a \leq -1$, we have $r(g)=0$ and $\res(g)=\begin{pmatrix}\lambda^a & 0
 \\ 0 & \lambda^d \end{pmatrix}$. Furthermore 
 for $g'= g  \res(g)(t^{-1})$, we have
 $$
 {g'}^{-1} \, g'(\lambda t)= 
 \begin{pmatrix} t^a & 0
 \\ 0 & t^d \end{pmatrix}
 \begin{pmatrix}\lambda^a& f \\ 0 & \lambda^d\end{pmatrix}
 \begin{pmatrix}\lambda^{-a} t^{-a} & 0
 \\ 0 & \lambda^{-d} t^{-d} \end{pmatrix}
 = 
 \begin{pmatrix}1 &  f_1
 \\ 0 & 1 \end{pmatrix}
 $$
 with $f_1= t^{a-d} \lambda^{-d} f
 = \lambda^{-d} t^{-d}\Bigl(  P(\lambda t) - \lambda^d P(t)   \Bigr)$.
 For $a=-1$, $d=1$ and $P(t)=1$, we see that ${g'}^{-1}g'(\lambda t)$ does not belong
 in $\GL_2\bigl(  A[\lambda, \lambda^{-1}][[t]] \bigr)$.

Similarly for $g''=  \res(g)(t^{-1}) g$, we have
$$
 {g''}^{-1} \, g''(\lambda t)= 
 t^{-a-d} \, \begin{pmatrix}t^d& - P(t) \\ 0 & t^a \end{pmatrix}\, 
 \begin{pmatrix} \lambda^{-a}  & 0 \\ 0 & \lambda^{-d} \end{pmatrix}
\begin{pmatrix} \lambda^a \, t^a & P(\lambda t) \\ 0 & \lambda^d \, t^d\end{pmatrix}
=\begin{pmatrix}1& f_2 \\ 0 & 1\end{pmatrix}
$$ 
with $f_2=t^{-a} \Bigl[ \lambda^{-a} P(\lambda t) - P(t) \Bigr]$.
So for $a \leq -1$,  we see 
that ${g''}^{-1} \, g''(\lambda t)$  belongs
 to  $\GL_2\bigl(  A[[t]] \bigr)$.
 } 
\end{sexample}

For a group $\Gamma$, we recall the notation $^\sigma \! \tau= \sigma \tau \sigma^{-1}$
and $\tau^\sigma = \sigma^{-1} \tau \sigma$ for $\sigma, \tau \in \Gamma$.

\begin{slemma}\label{lem_residue}
Let $g \in G(A(\!(t)\!))$.
\begin{numlist}
 
 \item   Let $g_1 \in G(A)$ and $g_2 \in G(A[[t]])$.
 Then $\res(g_1 g g_2)=  \res(g)^{\overline{g_2}}$
 where  $\overline{g_2}$ stands for the specialization of 
 $g_2$ in $G(A)$. In particular we have 
  $\res(^{g_1}\!g)= \, ^{\overline g_1}\!\res(g)$.

\item Let  $A \to A'$ be a base change such that $r(g)=r(g_{A'(\!(t)\!)})$ (it holds for example
when $A$ injects in $A'$).
Then  $\res(g_{A'})= \res(g)_{A'}$.

\item  Assume that $A$ is integral and let $f:G \to H$ be a proper homomorphism between affine $A$--group 
schemes of finite type. We have $\res(f(g))= \res(g)$.

\item Let $d$ be a non--negative integer and consider the map
$\phi_d: A(\!(t)\!) \to A(\!(T)\!)$ defined by $\phi_d(t)=T^d$.
We consider the map $\phi_{d,*}: G\bigl( A(\!(t)\!) \bigr) \to G\bigl( A(\!(T)\!) \bigr)$.

 \begin{romlist} 
  
\item If $d$ is not a zero divisor in  $A$, we have $\res(\phi_{d,*}(g) \bigr)(u) = \res(g)(du)$ if $r>0$, or  $\res(\phi_{d,*}(g) \bigr)(\lambda) = \res(g)(\lambda^d)$ if $r=0$.

\smallskip

\item If $A$ is of characteristic $p>0$ and $d=p^e$, we have 
$\res\bigl( \phi_d(g) \bigr)= \res(g)(u^{p^e})$.

\end{romlist}
\end{numlist}

\end{slemma}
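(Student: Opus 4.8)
The plan is to read every assertion off the closed formula $\res(g)=j_*\bigl(g^{-1}\sigma_r(g)\bigr)$, where $r:=r(g)$ (with $\sigma_0,j^+$ and $\lambda=1+u$ used when $r=0$, and $\res(g)=1$ when $r=-1$), combined with the index computations in Lemma \ref{lem_sorites}. For (1) I would first invoke Lemma \ref{lem_sorites}.(1) to get $r(g_1gg_2)=r(g)$, so that both residues are computed with the same $\sigma_r$. As $\sigma_r$ is induced by a ring homomorphism it is multiplicative on points, and $\sigma_r(g_1)=g_1$ since $g_1\in G(A)$ is constant; hence
\[
(g_1gg_2)^{-1}\sigma_r(g_1gg_2)=g_2^{-1}\bigl(g^{-1}\sigma_r(g)\bigr)\sigma_r(g_2).
\]
Applying the group homomorphism $j_*$ and using that $j$ specializes at $t=0$, one has $j(g_2)=\overline{g_2}$ and $j\bigl(\sigma_r(g_2)\bigr)=g_2(0)=\overline{g_2}$, whence $\res(g_1gg_2)=\overline{g_2}^{\,-1}\res(g)\,\overline{g_2}=\res(g)^{\overline{g_2}}$; the case $r=-1$ is trivial since then $g_1gg_2\in G(A[[t]])$. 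Taking $g_2=g_1^{-1}$ gives the stated special case $\res({}^{g_1}g)={}^{\overline g_1}\res(g)$.

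Parts (2) and (3) are then pure functoriality, the only substantive input being the equality of indices. For (2), the rings $A_r,A^u$ (and their $+$-variants) and the maps $\sigma_r,j$ are given by explicit presentations, hence are natural in $A$; a base change $A\to A'$ carries $g^{-1}\sigma_r(g)$ to $g_{A'}^{-1}\sigma_r(g_{A'})$ and commutes with $j_*$, and the hypothesis $r(g)=r(g_{A'})$ guarantees that $\res(g_{A'})$ is computed with this same $\sigma_r$, giving $\res(g_{A'})=\res(g)_{A'}$ (injectivity of $A\to A'$ suffices by Lemma \ref{lem_sorites}.(1)). For (3), Lemma \ref{lem_sorites}.(3).(ii) gives $r(f(g))=r(g)=r$ because $A$ is integral and $f$ is proper; naturality of $f$ with respect to $\sigma_r$ and $j$ then gives $f(g)^{-1}\sigma_r(f(g))=f\bigl(g^{-1}\sigma_r(g)\bigr)$, and applying $j_*$ yields $\res(f(g))=f\circ\res(g)$. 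It is precisely properness that forces the two indices to agree; without it $f(g)$ could have a strictly smaller index and the comparison would use a different $\sigma$.

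The computational core is (4). By Lemma \ref{lem_sorites}.(5) the index of $\phi_{d,*}(g)$ equals $dr$ in case (i) and $r$ in case (ii), so its residue is computed with $\sigma_{dr}$ (resp.\ $\sigma_r$) in the variable $T$. Writing $g=t^{-d_0}\ug$ as in Proposition \ref{prop_index}, the key point is that $\sigma_{dr}(T)=T(1+uT^{dr})$ sends $t=T^d$ to
\[
T^d(1+uT^{dr})^d=t(1+ut^r)^d=t\bigl(1+du\,t^r+\binom{d}{2}u^2t^{2r}+\cdots\bigr),
\]
so that, with $\epsilon=(1+ut^r)^d-1$, the matrix $\phi_{d,*}(g)^{-1}\sigma_{dr}(\phi_{d,*}(g))$ is computed by the formal identity \eqref{eq_coeff_epsilon} applied to this $\epsilon$. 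In $\epsilon^b$ the lowest-order term in $t$ is $(du)^bt^{rb}$; since every $(a,b)\in\supp(g)$ satisfies $-Nd_0+a+rb\ge 0$, the higher-order terms of $\epsilon^b$ and the unit factor $(1+uT^{dr})^{-dd_0}$ contribute only strictly positive powers of $t$ and vanish under $j$, so that $j$ retains exactly $\sum_{a+rb=Nd_0}c^{a,b}_{i,j}(du)^b=\res(g)(du)_{i,j}$. This proves (i) for $r>0$, and the parallel computation with $\sigma_0(T)=\lambda T$, i.e.\ $t\mapsto\lambda^dt$, gives $\res(g)(\lambda^d)$ when $r=0$.

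I expect the main obstacle to be exactly this last step. One cannot shortcut it by substituting $u\mapsto du$ into Proposition \ref{prop_index}.(4) to absorb the higher-order terms of $\epsilon$, because when $d$ is not invertible the coefficients $\binom{d}{k}u^k$ need not lie in the image of $u\mapsto du$; the support computation above must instead be rerun directly. Case (ii) is by contrast immediate: in characteristic $p$ with $d=p^e$ one has $(1+uT^r)^{p^e}=1+u^{p^e}T^{rp^e}$, so $t=T^{p^e}\mapsto t(1+u^{p^e}t^r)$ with no higher-order terms at all, and the same specialization gives $\res(\phi_{d,*}(g))(u)=\res(g)(u^{p^e})$.
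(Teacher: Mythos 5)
Your proof is correct. Parts (1)--(3) and part (4)(ii) follow the paper's own argument essentially verbatim: the cocycle identity $(g_1gg_2)^{-1}\sigma_r(g_1gg_2)=g_2^{-1}\bigl(g^{-1}\sigma_r(g)\bigr)\sigma_r(g_2)$ specialized at $t=0$, functoriality of $\sigma_r$ and $j$ for (2)--(3), and the Frobenius identity $(1+uT^r)^{p^e}=1+u^{p^e}T^{rp^e}$ for (4)(ii). Where you genuinely diverge is (4)(i) with $r>0$: the paper invokes Proposition \ref{prop_index}.(4) together with an implicit substitution of the leading coefficient, writing $j(g^{-1}\tau(g))=j(g^{-1}\sigma_{t,r}(g))$ and reading off $\res(g)(du)$, whereas you rerun the support estimate directly on the binomial expansion $\epsilon=(1+ut^r)^d-1$. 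The subtlety you flag is real: $\tau(t)=t(1+du\,t^r+\binom{d}{2}u^2t^{2r}+\cdots)$ has leading coefficient $du$ rather than $u$, and the higher coefficients $\binom{d}{i}u^i$ are not polynomials in $du$ when $d$ is not invertible, so the hypotheses of Proposition \ref{prop_index}.(4) are not literally met after the naive change of variable. Your direct computation --- every $(a,b)\in\supp(g)$ has $-Nd_0+a+rb\ge 0$, the lowest term of $\epsilon^b$ is $(du)^bt^{rb}$, and all corrections carry strictly positive powers of $t$ --- cleanly closes this gap and makes transparent where the hypothesis on $d$ enters. One remark: the paper's route can also be repaired without redoing the expansion, by applying Proposition \ref{prop_index}.(4) over the base ring $A[u]$ with a fresh leading variable $w$ (the coefficients $\binom{d}{i}u^i$ then being constants of the base) and specializing $w\mapsto du$ afterwards --- this is exactly the ``functoriality of $A^v\to A^u$'' device the paper makes explicit only in case (ii). Both arguments yield $\res(\phi_{d,*}(g))(u)=\res(g)(du)$; yours is the more self-contained of the two.
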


\begin{proof}
We write $r=r(g)=m/n$. 

\smallskip

\noindent (1) Since $\sigma_r(g_1)=g_1$, we have $(g_1 g g_2)^{-1} \, \sigma_r(g_1 g g_2)
= g_2^{-1}  (g^{-1} \sigma_r(g)) \, \sigma_r(g_2)
= g_2^{-1}  (g^{-1} \sigma_r(g)) \sigma_r(g_2)$.
When we specialize at $t=0$, we get $\res(g_1 g g_2)= \res(g)^{\overline{g_2}}$.
Assertions  (2) and (3)  follow of Lemma  \ref{lem_sorites}.

\smallskip

\noindent (4) We continue the proof of Lemma \ref{lem_sorites}.(5).
We have four cases to verify. 

\smallskip

\noindent{\it Case (i), $r>0$.} We have 
$r\bigl( \phi_{d,*}(g) \bigr)= d \, r(g)=\frac{d m}{n}$.
The change  $T \mapsto T(1+u T^r)$ induces
$t=T^d \mapsto T^d(1+u T^r)^d= \tau(t)= 
t( 1+ d \, u \, t^{\frac{m}{n d}} + \dots)$.
It follows that 
\[
 \phi_{d,*}( g^{-1} \,  \tau(g)) =
 \phi_{d,*}(g)^{-1} \, \sigma_{T,\frac{m}{n d}} \bigl( \phi_{d,*}(g) \bigr) 
 \in G( A^u[[T]])
\]
Proposition \ref{prop_index}.(4) yields that
$j( g^{-1} \,  \tau(g)) = j(g^{-1} \,  \sigma_{t,r}(g)) =
d\,\res(g) \in G(A^u)$.

\smallskip

\noindent{\it Case (i), $r=0$.} 
The change  $T \mapsto \lambda T$ induces
$t=T^d \mapsto \lambda^d t$.
It follows that 
\[
 \phi_{d,*}\bigl( g^{-1} \, g(\lambda t) \bigr) =
 \phi_{d,*}(g)^{-1} \,  g(\lambda^d T) 
 \res(g)(\lambda^d) \, ( 1 + \epsilon)
 \in G( A[\lambda^{\pm 1}[[T]])
\]
with $j( 1 + \epsilon)=1$. We conclude that 
$\res\bigl(\phi_{d,*}(g)\bigr)(\lambda)= \res(g)(\lambda^d)$.

\smallskip

\noindent{\it Case (ii), $r>0$.}  We have $r\bigl( \phi_{d,*}(g) \bigr)= r(g)$
and consider the base change
$t=T^{p^e} \mapsto T^{p^e}(1+u T^r)^{p^e}= \tau'(t)=t ( 1+ u^{p^e} t^{m/n} + \dots)$.
 It follows that 
\begin{equation}\label{formula_ram}
 \phi_{d,*}( g^{-1} \,  \tau'(g)) =
 \phi_{d,*}(g)^{-1} \, \sigma_{T,r} \bigl( \phi_{d,*}(g) \bigr) 
 \in G( A^u[[T]]).
\end{equation}
We put $v=u^{p^e}$ and consider $\sigma^v_r: A^v[[t]] \to A^v[[t^{1/n}]]$. 
By using Proposition \ref{prop_index}.(4) and the functoriality of 
the construction  $A^v \to A^u$, we have 
$j( g^{-1} \,  \tau'(g)) = j(g^{-1} \,  \sigma^v_{r}(g)) = 
\res(g)(v) =
\res(g)(u^{p^e}) \in G(A^u)$. 
By specializing  formula \eqref{formula_ram} at $T=0$, we get 
$\res(\phi_{d,*}(g) \bigr)(u) = \res(g)(u^{p^e})$.

\smallskip

\noindent{\it Case (ii), $r=0$.} It is similar. 
\end{proof}

\begin{stheorem} \label{thm_main}
\begin{numlist}
 
\item If $r(g)>0$, then $\res(g)$ is non-trivial homomorphism 
${\GG_{a,A} \to G}$.

\item If $r(g)=0$, then $\res(g)$  is a non-trivial homomorphism 
${\GG_{m,A} \to G}$.

\end{numlist}
\end{stheorem}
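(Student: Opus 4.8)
The plan is to show that the morphism $\res(g)$ is a homomorphism of $A$--group schemes, the non-triviality being already in hand: Proposition~\ref{prop_index}.(2) (resp.\ (3)) says that its value $j\bigl(g^{-1}\sigma_{r(g)}(g)\bigr)$ lies in $G(A^u)\setminus G(A)$ (resp.\ in $G(A^{u,+})\setminus G(A)$), so $\res(g)$ is not the constant map to the identity. Everything below takes place inside $G$, so no choice of representation intervenes. To prove the homomorphism property it suffices, taking the two coordinate functions $u_1,u_2$ (resp.\ $\lambda_1,\lambda_2$), to verify the single identity $\res(g)(u_1+u_2)=\res(g)(u_1)\,\res(g)(u_2)$ in $G\bigl(A[u_1,u_2]\bigr)$ (resp.\ $\res(g)(\lambda_1\lambda_2)=\res(g)(\lambda_1)\,\res(g)(\lambda_2)$ in $G\bigl(A[\lambda_1^{\pm1},\lambda_2^{\pm1}]\bigr)$): these are literally the two sides of the compatibility with the group laws, and symmetry in the two variables then yields commutativity of the two factors as well.

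The mechanism is the $1$--cocycle identity: for two substitutions $\tau_1,\tau_2$ of $A(\!(t)\!)$ one has
\[
g^{-1}(\tau_2\circ\tau_1)(g)=\bigl(g^{-1}\tau_2(g)\bigr)\cdot\tau_2\bigl(g^{-1}\tau_1(g)\bigr).
\]
Consider first $r(g)=0$, with $\tau_i:t\mapsto\lambda_i t$. Here $\tau_2\circ\tau_1:t\mapsto\lambda_1\lambda_2 t$ exactly, so the left side is $g^{-1}\sigma_0(g)$ evaluated at $\lambda=\lambda_1\lambda_2$. Applying the specialization $j$ at $t=0$, the first factor on the right becomes $\res(g)(\lambda_2)$, and since $g^{-1}\sigma_0(g)$ is a power series in $t$ (Proposition~\ref{prop_index}.(3)) whose constant term $\tau_2$ does not disturb, the second factor becomes $\res(g)(\lambda_1)$. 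Thus $\res(g)(\lambda_1\lambda_2)=\res(g)(\lambda_2)\res(g)(\lambda_1)$. In other words $\res(g)$ is a $1$--cocycle for the $\GG_{m,A}$--action that becomes trivial after $t=0$, hence a homomorphism.

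For $r(g)=m/n>0$ the same computation applies to $\tau_i:t\mapsto t(1+u_i t^{r})$, with one extra point: the composite $\tau_2\circ\tau_1$ is no longer the substitution with parameter $u_1+u_2$, but has the shape $t\mapsto t\bigl(1+(u_1+u_2)t^{r}+\text{(higher powers of }t^{1/n})\bigr)$. This is exactly the situation handled by Proposition~\ref{prop_index}.(4): applying it over the base $A[u_1]$ with distinguished variable $w=u_1+u_2$ (so that $A[u_1,u_2]=A[u_1]^{\,w}$) gives $j\bigl(g^{-1}(\tau_2\circ\tau_1)(g)\bigr)=\res(g)(u_1+u_2)$, where Lemma~\ref{lem_residue}.(2) identifies the residue over $A[u_1]$ with the base change of $\res(g)$. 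The right-hand side of the cocycle specializes, as above, to $\res(g)(u_2)\res(g)(u_1)$, giving the additive homomorphism property. When $n=1$ this is already complete and clean, since then $\tau_1(g)$ and $g^{-1}\tau_1(g)$ involve only integral powers of $t$, on which $\tau_2$ acts without difficulty.

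The real obstacle is to make $\tau_2\circ\tau_1$ literally well defined when $n>1$, i.e.\ when $n$ is not invertible in $A$ (the case $n=p^{f}$ in characteristic $p$). Then $\tau_1(g)$ already involves fractional powers $t^{k/n}$, and forming $\tau_2\circ\tau_1$ requires $\tau_2(t^{1/n})$, hence an $n$--th root of $1+u_2t^{r}$, which is absent from $A[u_1,u_2](\!(t^{1/n})\!)$; note that the shortcut of integralizing the index by $\phi_{n}$ is blocked here precisely because $n$ is a zero divisor, so Lemma~\ref{lem_residue}.(4)(i) does not apply. I would circumvent this by carrying out the computation after the injective base change $A[u_1,u_2]\hookrightarrow A[u_1,u_2^{1/n}]$ and in the finer ring $A[u_1,u_2^{1/n}](\!(t^{1/n^{2}})\!)$, where $(1+u_2t^{r})^{1/n}=1+u_2^{1/n}t^{\,m/n^{2}}$ does exist, so that $\tau_2$ and the composite become defined and the cocycle argument runs verbatim; since refining the root of $t$ and adjoining $u_2^{1/n}$ leaves $\res(g)$ unchanged and the base change is injective, the resulting identity descends to $G(A[u_1,u_2])$. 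Equivalently, one may avoid composition entirely by using the single, manifestly defined substitution $t\mapsto t(1+u_1t^{r})(1+u_2t^{r})$ and evaluating its two naturally arising factors by two further applications of Proposition~\ref{prop_index}.(4).
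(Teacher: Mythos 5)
Your argument is correct and follows essentially the same route as the paper: the cocycle identity in two substitution parameters, combined with Proposition~\ref{prop_index}.(4) to identify the residue of the composite with $\res(g)(u_1+u_2)$, the fractional powers $t^{1/n}$ being handled by the binomial series when $n$ is invertible and by adjoining $p$-power roots of the variable otherwise (the paper adjoins $u^{1/q}$ and proves additivity of $\res(g)$ precomposed with Frobenius, where you adjoin $u_2^{1/n}$ and descend along the injection $G(A[u_1,u_2])\hookrightarrow G(A[u_1,u_2^{1/n}])$ --- a cosmetic difference). The only slip is that your identity $(1+u_2t^{r})^{1/n}=1+u_2^{1/n}t^{m/n^{2}}$ is literally valid only when $n$ is a pure $p$-power; for $n=p^{e}n'$ with $(n',p)=1$ it must be combined with the binomial series for the prime-to-$p$ part, exactly as in the paper's proof.
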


\begin{remark} {\rm In the case of an integral index, we  
 provide an  alternative proof for the homomorphism part of $(1)$ and $(2)$
 in Remark \ref{rem_alternative}.
 }
\end{remark}

 \smallskip

\begin{proof} We  can continue to work with $\SL_N$.
We write $r=r(g)=m/n$.

\smallskip

\noindent (1) 
We assume firstly that $n$ is invertible in $A$.
By developing the serie $(1+u t^r)^{1/n}$ in  $A_r$, 
we can extend $\sigma_r:  A^u[[t]] \to A_r$
to $\widetilde \sigma_r:  A_r \to A_r$.
The trick is to use the rings $A^{v_1,v_2}=A[v_1,v_2]$,
 $A^{v_1,v_2}[[t]]$ and  $A^{v_1,v_2}[[t^{1/n}]]$
and to define  morphisms $\tau_i: A^{v_1,v_2}[[t]] \to 
 A^{v_1,v_2}[[t^{1/n}]]$ ($i=1,2,3$) by 
$t \mapsto t (1+v_1 t^r)$, $t (1+v_2 t^r)$,
$t \bigl(1+ (v_1 +v_2) t^r \bigr)$ respectively.
These morphisms extend to morphisms
$\widetilde \tau_i: A^{v_1,v_2}[[t^{1/n}]] \to 
 A^{v_1,v_2}[[t^{1/n}]]$ for $i=1,2,3$.
We have the cocycle relation

\begin{equation}\label{eq_cocycle}
g^{-1} \, ({\widetilde \tau}_1 {\widetilde \tau}_2)(g) = g^{-1} \, 
{\widetilde \tau}_1(g)\, {\widetilde \tau}_1\bigr( g^{-1} \, {\widetilde \tau}_2(g) \bigr)
\end{equation}
inside $G\bigl(  A^{v_1,v_2}(\!(t^{1/n})\!) \bigr)$.
By using functoriality properties (Lemmas \ref{lem_sorites}.(1) 
and \ref{lem_residue}.(2)) we have
$g^{-1} \, {\widetilde \tau}_i(g) \in G\bigl( A^{v_1,v_2}[[t^{1/n}]] \bigr)$
with specialization $\res(g)(v_i)$. It follows that

\begin{equation}\label{eq_cocycle2}
g^{-1} \, ({\widetilde \tau}_1 {\widetilde \tau}_2)(g) = 
\res(g)(v_1) \, \res(g)(v_2)
\end{equation}
 inside $G\bigl( A^{v_1,v_2}[[t^{1/n}]] \bigr)$ modulo
the kernel of $G\bigl( A^{v_1,v_2}[[t^{1/n}]] \bigr) \to 
G\bigl( A^{v_1,v_2} \bigr)$.
On the other hand ${\widetilde \tau}_1 {\widetilde \tau}_2(t)=
t \bigl(1+ (v_1 +v_2) t^r + \hbox{upper terms}\bigr)$.
Proposition \ref{prop_index}.(3) applied to the ring $A[v_1]$ and 
$u=v_1+v_2$ shows that 
$g^{-1} \, ({\widetilde \tau}_1 {\widetilde \tau}_2)(g)
= \res(g)(v_1+v_2)$ inside $G\bigl( A^{v_1,v_2}[[t^{1/n}]] \bigr)$ modulo
the kernel of $G\bigl( A^{v_1,v_2}[[t^{1/n}]] \bigr) \to 
G\bigl( A^{v_1,v_2} \bigr)$.
We conclude that $\res(g)(v_1+v_2)=\res(g)(v_1) \times \res(g)(v_2)$.
Thus $\res(g)$ is an $A$--group homomorphism.

We explain now the refinement to the case $n=q \,  n'$ when 
$A$ is of characteristic $p>0$ and $(n',p)=1$ and $q=p^e$.
We consider  $A^u[[t]] \xrightarrow{\sigma_r} A^u[[t^{1/n}]] \to 
 A[u^{1/q}][[t^{1/n q}]]$
to  $\widetilde \sigma_r:   A^u[[t^{1/n}]] \to 
A[u^{1/q}][[t^{1/n q}]]$ by mapping
$t^{1/n}$ to the series $(1+ u^{1/ q} t^{r/ q})^{1/n'}$.

We extend then similarly the morphisms 
$A^{v_1^q,v_2^q}[[t]] \xrightarrow{\tau_i}
 A^{v_1^q,v_2^q}[[t^{1/n}]] \to  A^{v_1,v_2}[[t^{1/n q}]]$ ($i=1,2,3$) 
defined by 
$t \mapsto t (1+v_1^q t^r)$, $t (1+v_2^q t^r)$,
$t \bigl(1+ (v_1^q +v_2^q) t^r \bigr)$ 
in $\widetilde \tau_i: A^{v_1^q,v_2^q}[[t^{1/n}]] \to 
A^{v_1,v_2}[[t^{1/n q}]]$ for $i=1,2,3$.
The cocycle condition reads then 
$$
g^{-1} \, ({\widetilde \tau}_1 {\tau}_2)(g) = g^{-1} \, 
{\tau}_1(g)\, {\widetilde \tau}_1\bigr( g^{-1} \, {\tau}_2(g) \bigr).
$$
The same method yields 
$\res(g)(v_1^q+v_2^q)=\res(g)(v_1^q) \times \res(g)(v_2^q)$.
Thus $\res(g)$ is an $A$--group morphism.
 
\smallskip

\noindent (2) We have seen that 
$\res(g) \in   G(A[\lambda^{\pm 1}])
\setminus G(A)$ in Lemma \ref{lem_residue}.(4). We consider 
the ring $A[\lambda_1^{\pm 1},
\lambda_2^{\pm 1}]$
and the $A[\lambda_1,\lambda_2]$-automorphisms $\rho_i$
of $A[\lambda_1,\lambda_2](\!(t)\!)[\lambda_1^{-1}, \lambda_2^{-1}]$
defined respectively by $\rho_1(t)=  
\lambda_1 t$, $\rho_2(t) =\lambda_2 t$,
and $\rho_3(t) = \lambda_1 \lambda_2 t$.
Since $\rho_3= \rho_2 \circ \rho_1$, the cocycle relation 
$g^{-1} \, ({\rho}_1 {\rho}_2)(g) = g^{-1} \, 
{\rho}_1(g)\, {\rho}_1\bigr( g^{-1} \, {\rho}_2(g) \bigr)$ in 
$G\bigl( A[\lambda_1,\lambda_2](\!(t)\!)[\lambda_1^{-1}, \lambda_2^{-1}] \bigr)$
yields  $\res(g)(\lambda_1 \lambda_2)
=\res(g)(\lambda_1) \,  \res(g)(\lambda_2)$ as desired.
\end{proof}

This provides some control on the indices in the 
 integral case.

\begin{scorollary} \label{cor_main}
 Assume that $A$ is integral.
 Let $d \geq 1$ be an integer such that $t^d g \in M_N(A[[t]])$.

\begin{numlist}
 \item  If $A$ contains $\ZZ$, then $r(g) \in \ZZ$
 and $r(g) \leq Nd$.

\item If $A$ is a $\FF_p$-algebra for a prime
$p$, then $r(g) \in \ZZ[\frac{1}{p}]$
and there exists $s \geq 0$ such that $p^s r(g) \in \ZZ$
and $p^s r(g) \leq Nd$.

\end{numlist}
\end{scorollary}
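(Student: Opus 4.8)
The plan is to compute $r(g)$ from the $u$-expansion of $g^{-1}\sigma_r(g)$ and then pin down its denominator. Since the index is independent of $\rho$, I reduce to $G=\SL_N$ as in Proposition~\ref{prop_index}; put $d_0=-V_t(g)\le d$ and write $g=t^{-d_0}\ug$ with $\det(\ug)=t^{Nd_0}$. Expanding $t\mapsto t(1+\epsilon)$ I record
\[
g^{-1}g\bigl(t(1+\epsilon)\bigr)=I_N+\sum_{b\ge1}W_b\,\epsilon^{b},\qquad W_b=g^{-1}\sum_m\tbinom{m}{b}g_m t^{m}\in\Mat_N\bigl(A(\!(t)\!)\bigr),
\]
where $g=\sum_m g_m t^{m}$; these regroup by powers of $u$ the computation of Proposition~\ref{prop_index}, since $g^{-1}\sigma_r(g)=I_N+\sum_b W_b\,u^{b}t^{rb}$ and the $W_b$ do not depend on $r$. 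From $\det(\ug)=t^{Nd_0}$ and surmultiplicativity of $V_t$ one gets the uniform bound $V_t(W_b)\ge -Nd_0$, and, exactly as in Proposition~\ref{prop_index}, $g^{-1}\sigma_r(g)\in G(A_r)$ holds iff $rb+V_t(W_b)\ge0$ for all $b$. Hence
\[
r(g)=\max\Bigl(0,\ \sup_{b\ge1}\tfrac{-V_t(W_b)}{b}\Bigr)\le\sup_{b\ge1}\tfrac{Nd_0}{b}=Nd_0\le Nd,
\]
which already gives the stated bound in case~(1); the remaining point in both cases is the denominator of $r(g)$. Note $W_1=t\,g^{-1}\frac{dg}{dt}\ne0$, since $W_1=0$ would force $g\in G(A[[t]])$.

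Case (1), $\ZZ\subseteq A$: by Lemma~\ref{lem_sorites}(1) I pass to the fraction field and assume $A=K\supseteq\QQ$. With $\delta=t\frac{d}{dt}$ and $Z:=g^{-1}\delta g=W_1$, the expansion $g^{-1}g(te^{s})=I_N+\sum_{n\ge1}\frac1{n!}\bigl(g^{-1}\delta^{n}g\bigr)s^{n}$ together with the recursion $g^{-1}\delta^{n+1}g=Z\,(g^{-1}\delta^{n}g)+\delta(g^{-1}\delta^{n}g)$ shows that $g^{-1}\delta^{n}g$ is a noncommutative polynomial in $Z,\delta Z,\delta^{2}Z,\dots$, each monomial of which is a product of between $1$ and $n$ factors $\delta^{j}Z$. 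Since $\delta$ never lowers the gauge, $V_t(\delta^{j}Z)\ge V_t(Z)$, so when $V_t(Z)<0$ every such monomial has gauge $\ge nV_t(Z)$, giving $V_t(g^{-1}\delta^{n}g)\ge nV_t(Z)$. Substituting $s=\log(1+\epsilon)$ expresses $W_b$ as a $\QQ$-combination of the $\frac1{n!}g^{-1}\delta^{n}g$ with $n\le b$, whence $V_t(W_b)\ge bV_t(Z)$ and $-V_t(W_b)/b\le -V_t(Z)$ for every $b$. Thus the supremum is attained at $b=1$ and $r(g)=-V_t(W_1)=-1-V_t\bigl(g^{-1}\frac{dg}{dt}\bigr)\in\ZZ$, with $r(g)\le Nd_0\le Nd$. (If $V_t(Z)\ge0$ the same estimate forces $r(g)=0$.)

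Case (2), $A$ an $\FF_p$-algebra: again reduce to $K=\mathrm{Frac}(A)$. The device of Case~(1) is unavailable because the factors $\frac1{n!}$ do not exist, and Example~\ref{example_GL2}(b) shows $r(g)$ can genuinely lie in $\ZZ[\tfrac1p]\setminus\ZZ$. The plan is to prove the characteristic-$p$ analogue of the gauge inequality, with $p$-powers replacing $1$: for $b=p^{e}b'$ with $(b',p)=1$,
\[
V_t\bigl(W_{p^{e}b'}\bigr)\ \ge\ b'\,V_t\bigl(W_{p^{e}}\bigr).
\]
Granting this, $-V_t(W_b)/b\le -V_t(W_{p^{e}})/p^{e}$, so the supremum defining $r(g)$ is attained at a pure $p$-power $b=p^{e}$; writing $w=-V_t(W_{p^{e}})\in\{1,\dots,Nd_0\}$ gives $r(g)=w/p^{e}$, whose reduced denominator is a power $p^{s}$ of $p$, so $r(g)\in\ZZ[\tfrac1p]$ and $p^{s}r(g)=w/p^{e-s}\le w\le Nd_0\le Nd$, as required. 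To establish the displayed inequality I would exploit the identity $(1+\epsilon)^{p^{e}}=1+\epsilon^{p^{e}}$, equivalently Lucas' congruence $\binom{m}{b}\equiv\prod_i\binom{m_i}{b_i}\pmod p$ applied to $W_b=t^{b}g^{-1}D_t^{(b)}g$: feeding $p^{e}$ equal arguments into the cocycle relation of Theorem~\ref{thm_main} yields
\[
g^{-1}g\bigl(t(1+\epsilon)^{p^{e}}\bigr)=\prod_{i=0}^{p^{e}-1}\Bigl(g^{-1}g\bigl(t(1+\epsilon)\bigr)\Bigr)\Big|_{t\mapsto t(1+\epsilon)^{i}},
\]
whose left-hand side involves only exponents $\epsilon^{p^{e}b}$. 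I expect this inequality to be the main obstacle: unlike in characteristic $0$, the gauge of $W_{p^{e}b'}$ must be extracted from the telescoping product (or from the multiplicativity of the Hasse derivatives $D_t^{(p^{e})}$), and surmultiplicativity of $V_t$ bounds it only from one side, so the relevant cancellations have to be tracked explicitly.
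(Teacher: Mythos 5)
Your Case (1) is correct and takes a genuinely different route from the paper. The paper treats both cases uniformly: it passes to the fraction field $F$, uses the Iwasawa decomposition $\SL_N\bigl(F(\!(t)\!)\bigr)=B_N\bigl(F(\!(t)\!)\bigr)\,\SL_N\bigl(F[[t]]\bigr)$ together with Lemma \ref{lem_residue}.(1) to reduce to upper triangular $g$, notes that $\res(g)$ then lies in the unipotent group $U_N(F[u])$, and invokes Theorem \ref{thm_main}: the $(i,j)$--entry of $\res(g)$ with $i+j$ minimal among the nontrivial positions is an \emph{additive} polynomial in $u$, hence involves only $b=1$ in characteristic $0$ and only $b=p^s$ in characteristic $p$; the relation $-Nd+a+r(g)b=0$ for a surviving exponent $b$ then gives integrality and the bound. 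Your logarithmic-derivative estimate $V_t(W_b)\ge b\,V_t(W_1)$ replaces all of this in characteristic $0$, yields the closed formula $r(g)=\max\bigl(0,-V_t(W_1)\bigr)$, and gives $r(g)\le Nd$ with no hypothesis beyond the setup (the paper only records this later, via Lemma \ref{lem_ind_var}). One slip: the blanket claim that $W_1\neq 0$ fails in characteristic $p$ (e.g. the unipotent matrix with off-diagonal entry $t^{-p}$ has $dg/dt=0$); it is valid, and only used, in Case (1).

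Case (2), however, has a genuine gap, which you flag yourself: the inequality $V_t(W_{p^eb'})\ge b'\,V_t(W_{p^e})$ is asserted but not proved, and neither the telescoping cocycle identity nor surmultiplicativity of $V_t$ yields it, since these bound gauges from one side only. For a single off-diagonal entry $t^{-M}$ the inequality does reduce to a Kummer-type digit statement ($\binom{-M}{p^eb'}\not\equiv 0 \bmod p$ forces $\binom{-M}{p^e}\not\equiv 0 \bmod p$), but for general $g$ the $W_b$ involve matrix products and no such clean reduction is available; as written, the argument does not establish that the supremum defining $r(g)$ is attained at a $p$-power. The structural input that closes exactly this gap in the paper is the homomorphism property of the residue (Theorem \ref{thm_main}) combined with the Iwasawa reduction described above: additivity of the leading entry of $\res(g)$ is precisely what forces the exponent $b$ realizing $-Nd+a+r(g)b=0$ to be a $p$-power, with no gauge inequality required. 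If you wish to keep your framework for Case (2), you should import that argument rather than pursue the unproven inequality.
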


\begin{proof} 
(1) and (2). If $r(g)=0$ the statements are clear so that we can assume that 
$r(g)>0$. We are allowed to replace $A$ by its fraction field $F$ according to Lemma
\ref{lem_residue}.(2).
We use now the decomposition $\SL_N\bigl( F(\!(t)\!) \bigr) =
B_N\bigl( F(\!(t)\!) \bigr) \, 
\SL_N\bigl( F[[t]] \bigr)$
where $B_N$ stands for the $F$--subgroup of upper triangular matrices \cite[4.4.3]{BT2}.
Lemma \ref{lem_residue}.(1)  permits to assume that $g \in  B_N\bigl( F(\!(t)\!) \bigr)$.
Coming back in the proof of Proposition \ref{prop_index}, we consider the
 coefficients of $g^{-1}\sigma_r(g)$  

\begin{equation}\label{eq_coeff_bis2}
D_{i,j,r} = (1+ut^r)^{-d}\, 
\Bigl( \delta_{i,j} \, + t^{-Nd} \, \sum\limits_{a \geq 0, \\ \, b \geq 1} \, 
c^{a,b}_{i,j} \, t^{a+ r b} \,  u^{b} \Bigr) .
\end{equation}

\noindent  We have $D_{i,j,r}=0$ if $j<i$.
We consider the non-empty set
$$
\Upsilon(g)= \Bigl\{  (i,j,a,b) \, \mid \, -Nd +a +r b=0  \hbox{\enskip and \enskip}
c^{a,b}_{i,j} \not = 0 \Bigr\}.
$$ 
\noindent 
It follows that the $(i,j)$-entry of $\res(g) \in B_N(k[u])$ is 
\begin{equation}\label{eq_coeff_ter}
c_{i,j} = \delta_{i,j} \, +  \, \sum\limits_{(i,j,a,b) \in \Upsilon(g)} \, 
c^{a,b}_{i,j} \,  u^{b}
\end{equation}

\noindent It follows that $c_{i,i}=1$ for each $i=1,..,N$, that is
$\res(g) \in U_N(k[u])$ where $U_N$ stands for the unipotent radical of $B_N$.
Let $(i,j,a,b)$ in $\Upsilon(g)$  such that  $i+j$ is minimal.
Since $\res(g)$ is a group homomorphism, it follows that 
$u \mapsto c_{i,j}^{a,b} u^b$ is a group homomorphism.

\smallskip

\noindent{\it Case of characteristic zero.}
In this case we have $b=1$. The equation $-Nd +a +r b=0$
yields that $r \in \ZZ$ and that $r=Nd - a \leq Nd$.

\smallskip

\noindent{\it Case of characteristic $p$.} It follows that $b$ is
a $p$--power, i.e.
$b=p^s$. Thus $r \in \ZZ[\frac{1}{p}]$ and 
$p^s r = Nd - a \leq Nd$.
\end{proof}

\begin{sremark}{\rm 
Corollary \ref{cor_main}.(1) is strenghtened by  Lemma \ref{lem_ind_var}.(1)
which shows that $r(g) \leq Nd$ without any condition on $A$.
 }
\end{sremark}

\subsection{Left index, left residue}

Given $g \in G(A(\!(t)\!))$, we define the left index and left
residue by $r^L(g)= r(g^{-1})$ and $\res^L(g)= \res(g^{-1})$.
If $G$ is commutative, we have  $r^L(g)= r(g)$ and $\res^L(g)= - \res(g)$.
This breaks in the non-commutative case.

\begin{sexample} {\rm In $\GL_2(k(\!(t)\!))$, we take $g= \begin{pmatrix}t& 1 
\\ 0 & t \end{pmatrix}$. We have seen that $r(g)=1$. We have 
$g^{-1}= \begin{pmatrix}t^{-1}& - t^2 
\\ 0 & t^{-1} \end{pmatrix}$
and
$$
g \, \sigma_r(g^{-1})= \begin{pmatrix}t& 1 
\\ 0 & t \end{pmatrix} \, \begin{pmatrix} t^{-1}(1-ut^r)^{-1}& - t^{-2}(1+ut^r)^{-2} 
\\ 0 & t^{-1}(1-ut^r)^{-1} \end{pmatrix}
= \begin{pmatrix}(1-ut^r)^{-1}& 0 
\\ 0 &(1+ut^r)^{-1} \end{pmatrix}
$$
so that $r(g^{-1})=0$.

}
\end{sexample}

\subsection{Extension to more affine group schemes}

Lemma \ref{lem_sorites}.(1) implies in particular that the 
index (and residue) is insensible to a  faithfully flat extension $A \to A'$ of rings.
Descent theory implies  that their definition extend to group schemes admitting
a closed embedding in $\SL_N$ after a  faithfully flat extension.
This applies in particular to group schemes of multiplicative type
(finitely presented) and to reductive group schemes.

From now on, we may then assume that $G$ 
is a closed $A$-subgroup of 
$\SL_N$ or that $G$ is affine of finite presentation
and there exists a  faithfully flat extension $A \to A'$
such that $G_{A'}$  is a closed $A'$-subgroup of 
some $\SL_{N',A'}$.

\subsection{Wound $A$--groups}

\begin{scorollary} \label{cor_wound}
 The following conditions are equivalent:

\begin{romlist}
 
\smallskip

\item $G\bigl(A[[t]] \bigr)=G\bigl( A(\!(t)\!) \bigr)$;

\smallskip

\item $G(A)=G\bigl( A[\lambda, \lambda^{-1} ]\bigr)$;
 
\smallskip

\item  $\Hom_{A-gr}(\GG_a,G)=0$ and $\Hom_{A-gr}(\GG_m,G)=1$.

\end{romlist}
\end{scorollary}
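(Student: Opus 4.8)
The plan is to prove the equivalence $(i) \Leftrightarrow (ii) \Leftrightarrow (iii)$ by exploiting the index/residue machinery already developed, the cleanest route being $(iii) \Rightarrow (i)$, $(i) \Rightarrow (ii)$, and $(ii) \Rightarrow (iii)$, with the residue construction doing the heavy lifting in the first implication.

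\textbf{Step 1: $(iii) \Rightarrow (i)$.} First I would argue by contraposition. Suppose $G\bigl(A[[t]]\bigr) \neq G\bigl(A(\!(t)\!)\bigr)$, so there exists $g \in G\bigl(A(\!(t)\!)\bigr) \setminus G\bigl(A[[t]]\bigr)$, i.e.\ $r(g) \geq 0$. If $r(g) > 0$, then by Theorem~\ref{thm_main}.(1) the residue $\res(g)$ is a \emph{non-trivial} homomorphism $\GG_{a,A} \to G$, violating $\Hom_{A-gr}(\GG_a,G)=0$. If $r(g)=0$, then by Theorem~\ref{thm_main}.(2) the residue $\res(g)$ is a non-trivial homomorphism $\GG_{m,A} \to G$, violating $\Hom_{A-gr}(\GG_m,G)=1$. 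Either way $(iii)$ fails, so $(iii) \Rightarrow (i)$ holds.

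\textbf{Step 2: $(i) \Leftrightarrow (ii)$.} The content of Lemma~\ref{lem_0} is precisely that $r(g)\leq 0$ is equivalent to $g^{-1}g(\lambda t) \in G\bigl(A[\lambda,\lambda^{-1}][[t]]\bigr)$. I would use this to translate between the two conditions. For $(i) \Rightarrow (ii)$: given a point $h \in G\bigl(A[\lambda,\lambda^{-1}]\bigr)$, I would view $\lambda$ as a new variable and realize $h$ via the substitution $\lambda = 1+u$ (or directly specialize through the homomorphism $A[\lambda,\lambda^{-1}] \to A(\!(t)\!)$ sending $\lambda \mapsto t^{-1}$, say); assuming $(i)$ forces the resulting element of $G\bigl(A(\!(t)\!)\bigr)$ to lie in $G\bigl(A[[t]]\bigr)$, and tracking the specialization back should pin $h$ down to $G(A)$. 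Conversely $(ii) \Rightarrow (i)$ follows because if every $g$ satisfied $g^{-1}g(\lambda t) \in G\bigl(A[\lambda,\lambda^{-1}][[t]]\bigr)$ its specialization at $t=0$ would land in $G\bigl(A[\lambda,\lambda^{-1}]\bigr)=G(A)$ by $(ii)$, making the index nonpositive and combining with the residue dichotomy to force $g \in G\bigl(A[[t]]\bigr)$. This is the delicate bookkeeping step and I would lean on Lemma~\ref{lem_0} and Lemma~\ref{lem_sorites}.(1) to keep the base-change invariance straight.

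\textbf{Step 3: $(ii) \Rightarrow (iii)$ (or $(i) \Rightarrow (iii)$).} Finally I would show the vanishing of the Hom groups. A homomorphism $\GG_{a,A} \to G$ is a point of $G\bigl(A[u]\bigr)$ satisfying the comultiplication identity; restricting to $\GG_{m,A} \to G$ gives a point of $G\bigl(A[\lambda,\lambda^{-1}]\bigr)$. Condition $(ii)$ says every such point is already defined over $A$, i.e.\ constant, which forces any homomorphism $\GG_m \to G$ to be trivial and (after extending along $\GG_a \hookrightarrow \GG_m$ via $\lambda = 1+u$, or directly using that a nonconstant polynomial map cannot be group-valued over $A$) forces $\Hom_{A-gr}(\GG_a,G)=0$ as well. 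The main obstacle I anticipate is not any single implication but the care needed in Step~2 to handle the two indeterminates $u$ and $\lambda$ consistently and to ensure the base-change hypothesis of Lemma~\ref{lem_residue}.(2) is met; once the additive and multiplicative indeterminate conventions of \S2.1 are matched up correctly, each implication reduces to a clean application of the already-established residue theorems.
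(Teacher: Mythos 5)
Your overall architecture coincides with the paper's: your Step 1, $(iii)\Rightarrow(i)$, is exactly the paper's application of Theorem \ref{thm_main} (a point of $G\bigl(A(\!(t)\!)\bigr)\setminus G\bigl(A[[t]]\bigr)$ yields a non-trivial homomorphism from $\GG_a$ or $\GG_m$ according to the sign of the index), and your Step 3, $(ii)\Rightarrow(iii)$, is the paper's observation that under $(ii)$ every point of $G(A[u])\subseteq G\bigl(A[\lambda,\lambda^{-1}]\bigr)$ (via $\lambda=1+u$) is constant, hence any homomorphism from $\GG_a$ or $\GG_m$ must be trivial since it sends the identity section to $1$.

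The genuine gap is in $(i)\Rightarrow(ii)$. The one concrete mechanism you propose --- embed $A[\lambda,\lambda^{-1}]$ into $A(\!(t)\!)$ by $\lambda\mapsto t^{-1}$ and invoke $(i)$ --- does not suffice: it only shows that the coordinates of $h\in G\bigl(A[\lambda,\lambda^{-1}]\bigr)$ lie in $A[\lambda,\lambda^{-1}]\cap A[[\lambda^{-1}]]=A[\lambda^{-1}]$, so it cannot exclude a non-constant dependence on $\lambda^{-1}$ (for instance the element $\lambda^{-1}$ of $\GG_a\bigl(A[\lambda^{\pm 1}]\bigr)$ passes this test). Lemma \ref{lem_0} is also not the right tool here, as it concerns elements of $G\bigl(A(\!(t)\!)\bigr)$ rather than of $G\bigl(A[\lambda,\lambda^{-1}]\bigr)$. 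The paper's proof applies $(i)$ \emph{twice}, once in the parameter $\lambda$ and once in $\lambda^{-1}$: from $G\bigl(A[[\lambda]]\bigr)=G\bigl(A(\!(\lambda)\!)\bigr)$ and $G\bigl(A[[\lambda^{-1}]]\bigr)=G\bigl(A(\!(\lambda^{-1})\!)\bigr)$ one deduces $G(\PP^1_A)=G\bigl(A[\lambda^{\pm 1}]\bigr)$, and since $G$ is affine of finite type every $\PP^1_A$-point is constant, i.e. the coordinates of $h$ lie in $A[\lambda]\cap A[\lambda^{-1}]=A$. With this replacement your argument closes. Your additional implication $(ii)\Rightarrow(i)$ in Step 2 is then redundant for the cycle and should be dropped; as written it also begs the question, since not every $g$ satisfies $g^{-1}g(\lambda t)\in G\bigl(A[\lambda,\lambda^{-1}][[t]]\bigr)$ --- by Lemma \ref{lem_0} that is precisely the condition $r(g)\le 0$.
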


\begin{proof}
 $(i) \Longrightarrow (ii)$.
We have $G(A[[t]])=G\bigl( A(\!(t)\!) \bigr)$ and similarly
$G(A[[t^{-1}]])=G\bigl( A(\!(t^{-1})\!) \bigr)$.
If follows that $G(\PP^1_A)=
G\bigl( A[\lambda^{\pm 1}] \bigr)$. Since $G$ is affine of finite type over $A$,
we conclude that 
 $G(A)=G\bigl( A[\lambda^{\pm 1}] \bigr)$. 

\smallskip

\noindent $(ii) \Longrightarrow (iii)$. Let $h: \GG_a \to G$
be an $A$--homomorphism. 
Our assumption provides an element $g \in G(A)$ such that 
$h(u)=g$. Since $h(0)=1$, we 
conclude that $h$ is trivial.
Similarly we have $\Hom_{A-gr}(\GG_m,G)=1$.

\smallskip

\noindent $(iii) \Longrightarrow (i)$. This follows of Theorem \ref{thm_main}.
\end{proof}

\begin{sremark}{\rm If  $A=F$ is a field, 
Corollary \ref{cor_wound} was known
in the reductive case as the Bruhat-Tits-Rousseau's theorem 
\cite{P} (or \cite[prop. 3.5]{Gu}) and the general case is a consequence of Gabber's compactifications,
this will be discussed in \S \ref{section_gabber}.
}
\end{sremark}

\begin{sdefinition} {\rm
If the affine $A$--group scheme $G$ of finite type satisfies 
the conditions of  Corollary \ref{cor_wound}, we say that 
$G$ is $A$-{\it wound} ($A$-{\it ploy\'e} in French).
}
\end{sdefinition}

Our definition extends over 
any ring the case of  wound algebraic groups over a field \cite[5.1]{C}.

\section{Residues for  torsors}

\subsection{Definition}

\begin{sproposition} \label{prop_torsor} Let $X$ be a $G$--torsor over $A$. 
Let $x \in X\bigl(  A(\!(t)\!) \bigr)
\setminus  X\bigl( A[[t]] \bigr)$. For each $r \in \QQ_{\geq 0}$,
we denote by $g_r(x)$ the unique element of $G\bigl(  \cA_r \bigr)$
such that \break $\sigma_r(x)=x . \,  \,  g_r(x)$.

\smallskip

\begin{numlist}
 
\item The set 
$$
\Sigma(x)= \Bigl\{ r \in \QQ_{>0} \, \mid g_x(r) \in G(A_r) \Bigr\}
$$
is non-empty and let $r(x)$ be its lower bound in $\RR$.
Then $r(x) \in \QQ_{\geq 0}$ and \break
${\Sigma(x)= \QQ_{>0} \cap [r(g), + \infty[}$.

\smallskip

\item If $r(x) >0$, then $j\bigl(g_{r(x)}(x)\bigr) \in G(A^u)$
is a non-trivial homomorphism \break $\res(x) :\GG_a \to G$.

\smallskip

\item If $r(x) =0$, then $j\bigl(g_{r(x)}(x)\bigr) \in G(A^{u,+})$
is a non-trivial homomorphism \break $\res(x):\GG_m \to G$. 

\end{numlist}
\end{sproposition}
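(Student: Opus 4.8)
The plan is to reduce the statement to the group-scheme results already established for points of $G(A(\!(t)\!))$, namely Proposition \ref{prop_index} and Theorem \ref{thm_main}, by trivializing the torsor after a faithfully flat base change and checking that every ingredient descends. Concretely, I would choose a faithfully flat $A$-algebra $A'$ over which $X$ becomes trivial; since $G$ is affine and $X$ is an fppf $G$-torsor, such an $A'$ exists (one may take $A'=\cO(X)$, or any fppf cover splitting $X$), and $A\hookrightarrow A'$ is injective. Fixing a point $p\in X(A')$ and using it to trivialize $X_{A'}\cong G_{A'}$, $h\mapsto p\cdot h$, I get $x_{A'}=p\cdot g$ for a unique $g\in G({A'}(\!(t)\!))$. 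The crucial point is that $p$ is a \emph{constant} section, so $\sigma_r(p)=p$; applying $\sigma_r$ to $\sigma_r(x)=x\cdot g_r(x)$ over $A'$ gives $p\cdot\sigma_r(g)=p\cdot g\cdot g_r(x)$, whence the reduction identity
\[
g_r(x)_{A'}=g^{-1}\,\sigma_r(g).
\]
Thus after base change the torsor datum attached to $x$ is exactly the group datum attached to $g$ in Proposition \ref{prop_index}. Moreover, because $X$ is affine, membership of a point in $X(A[[t]])$ is the vanishing of the finitely many negative $t$-coefficients of its coordinates, a condition that descends along the injection $A\hookrightarrow A'$; hence $x\notin X(A[[t]])$ forces $g\notin G({A'}[[t]])$, so Proposition \ref{prop_index} and Theorem \ref{thm_main} genuinely apply to $g$ over $A'$.

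With the reduction in hand, part (1) follows by descent of the defining condition. For $G\subset\SL_N$ closed, $g_r(x)\in G(A_r)$ means that the entries of $g_r(x)\in G(\cA_r)$ carry no negative powers of $t^{1/n}$; since $A\hookrightarrow A'$ induces $A^u\hookrightarrow {A'}^u$, such a coefficient vanishes over $A$ iff it vanishes over $A'$. Hence $\Sigma(x)=\Sigma(g)$ as subsets of $\QQ_{>0}$, and Proposition \ref{prop_index}.(1) applied to $g$ over $A'$ yields that $\Sigma(x)$ is non-empty, that $r(x)=r(g)\in\QQ_{\geq 0}$, and that $\Sigma(x)=\QQ_{>0}\cap[r(x),+\infty[$.

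For (2) and (3) I would transport both the homomorphism property and non-triviality. When $r(x)>0$ we have $r(x)\in\Sigma(x)$, so $g_{r(x)}(x)\in G(A_{r(x)})$ and $\res(x):=j(g_{r(x)}(x))\in G(A^u)$ is defined over $A$, with base change $\res(x)_{A'}=j(g^{-1}\sigma_{r(g)}(g))=\res(g)$. The homomorphism property is the identity $\res(x)(v_1+v_2)=\res(x)(v_1)\,\res(x)(v_2)$ in $G(A[v_1,v_2])$, which holds because it holds over $A'$ (Theorem \ref{thm_main}.(1)) and $A[v_1,v_2]\hookrightarrow {A'}[v_1,v_2]$; non-triviality descends because $\res(g)\in G({A'}^u)\setminus G(A')$ by Proposition \ref{prop_index}.(2) and $G(A^u)\hookrightarrow G({A'}^u)$. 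The case $r(x)=0$ is identical with the multiplicative variant: by Proposition \ref{prop_index}.(3), $g^{-1}\sigma_0(g)\in G({A'}^{u,+}[[t]])$, so the same coefficient descent gives $g_0(x)\in G(A^{u,+}[[t]])$ and $\res(x):=j(g_0(x))\in G(A^{u,+})$, a non-trivial homomorphism $\GG_{m,A}\to G$ by Theorem \ref{thm_main}.(2). Functoriality of the construction under $A\hookrightarrow A'$ is guaranteed by Lemmas \ref{lem_sorites}.(1) and \ref{lem_residue}.(2).

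The hard part will be the reduction identity $g_r(x)_{A'}=g^{-1}\sigma_r(g)$ together with the bookkeeping that every condition we need—integrality of $g_r(x)$, the cocycle/homomorphism identity, non-triviality, and membership in $X(A[[t]])$—is a coefficient condition descending along the injection $A\hookrightarrow A'$. None of these is deep, but one must be careful that $\sigma_r$ fixes the constant trivializing section $p$, and that it is injectivity of $A\hookrightarrow A'$, rather than any flatness of $A[[t]]\to {A'}[[t]]$, that transports each vanishing condition; this is precisely what makes the affineness of $X$ essential.
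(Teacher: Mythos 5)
Your proposal is correct and follows essentially the same route as the paper: the paper's proof also reduces to the trivial-torsor case via a flat cover $B$ of $A$ splitting $X$, observes that $A$ embeds in $B$ so that $\Sigma(x)=\Sigma(x_B)$, and then invokes Proposition \ref{prop_index} and Theorem \ref{thm_main}, with (2) and (3) obtained by descent. You have merely spelled out the details (the reduction identity $g_r(x)_{A'}=g^{-1}\sigma_r(g)$ and the coefficientwise descent of each condition) that the paper leaves implicit.
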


\begin{proof} In the case of a trivial $G$--torsor, the result follows
of Proposition \ref{prop_index} and of Theorem \ref{thm_main}. We shall reduce to this case. 
Let $B$ a flat cover of $A$ such that $X_B \cong G_{B}$.
Since $A$ embeds in $B$, we have $\Sigma(x) = \Sigma(x_B)$
so for (1) we are reduced to the case of $X_B$.
The statements (2) and (3) follow by descent from $B$ to $A$.
\end{proof}

Of course this extension of the residue to torsors satisfy 
the same functorialities as the residue for group schemes.

\subsection{The wound case}

\begin{scorollary} \label{cor_wound_tors} We assume that 
$G$ is $A$--wound.
Let $X$ be a $G$--torsor over $A$. Then $X$ is trivial 
if and only if  $X \times_{A} A(\!(t)\!)$ is trivial.  
\end{scorollary}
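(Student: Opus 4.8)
The forward implication is immediate: the structure map $A \to A(\!(t)\!)$ sends any $A$-point of $X$ to an $A(\!(t)\!)$-point, so triviality of $X$ forces triviality of $X \times_A A(\!(t)\!)$. All the content is in the converse, and the plan is to deploy the torsor residue of Proposition \ref{prop_torsor} as an obstruction: woundness is precisely the hypothesis that makes this obstruction vanish, forcing any $A(\!(t)\!)$-point to come from an $A$-point.

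So assume $X \times_A A(\!(t)\!)$ is trivial, i.e. $X\bigl(A(\!(t)\!)\bigr) \neq \emptyset$, and fix $x \in X\bigl(A(\!(t)\!)\bigr)$. The argument splits according to whether $x$ extends to a power-series point. First I would treat the benign case $x \in X\bigl(A[[t]]\bigr)$: applying the functor $X(-)$ to the $A$-algebra homomorphism $A[[t]] \to A$, $t \mapsto 0$, produces an $A$-point of $X$, so $X$ is trivial and we are done. The remaining case is $x \in X\bigl(A(\!(t)\!)\bigr) \setminus X\bigl(A[[t]]\bigr)$, and here Proposition \ref{prop_torsor} attaches to $x$ an index $r(x) \in \QQ_{\geq 0}$ together with a \emph{non-trivial} homomorphism $\res(x) : \GG_a \to G$ when $r(x) > 0$, or $\res(x) : \GG_m \to G$ when $r(x) = 0$. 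But $G$ is $A$-wound, so condition (iii) of Corollary \ref{cor_wound} gives $\Hom_{A-gr}(\GG_a, G) = 0$ and $\Hom_{A-gr}(\GG_m, G) = 1$; in either subcase the existence of such a non-trivial homomorphism is a contradiction. Hence this case is impossible, every point of $X\bigl(A(\!(t)\!)\bigr)$ in fact lies in $X\bigl(A[[t]]\bigr)$, and the previous step concludes.

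I do not expect a genuine obstacle here, because the analytic work has all been front-loaded: Proposition \ref{prop_torsor} already built the residue for torsors (via descent to the split torsor $X_B \cong G_B$), and Corollary \ref{cor_wound} already packaged woundness into the vanishing of $\Hom$ out of $\GG_a$ and $\GG_m$. The corollary is therefore a formal consequence, and the only point I would verify with care is the specialization step, which is pure functoriality of $X(-)$ along $t \mapsto 0$ and needs no finiteness or descent input at this stage.
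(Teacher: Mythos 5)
Your proof is correct and follows the paper's argument essentially verbatim: dispose of the case $x \in X\bigl(A[[t]]\bigr)$ by specializing $t \mapsto 0$, and otherwise use Proposition \ref{prop_torsor} to produce a non-trivial homomorphism from $\GG_a$ or $\GG_m$ to $G$, contradicting woundness via Corollary \ref{cor_wound}. No differences worth noting.
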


\begin{proof} The direct implication is obvious. We assume that 
 $X \times_A A(\!(t)\!)$ is trivial, that is $X\bigl( A(\!(t)\!) \bigr) \not = \emptyset$. 
If $X\bigl( A[[t]] \bigr) \not = \emptyset$, then $X(A) \not = \emptyset$
and $X$ is a trivial $G$--torsor. We can assume then that
 $X\bigl( A[[t]] \bigr)  = \emptyset$. We pick  $x \in X\bigl( A(\!(t)\!) \bigr)$
and Proposition \ref{prop_torsor} provides an element $\res(x)$
which is a non-trivial morphism $\GG_a \to G$ or a non-trivial morphism
$\GG_m \to G$. This contradicts our  assumption.
\end{proof}

\subsection{The commutative case}

\begin{stheorem}\label{thm_commutative}
We assume that $A$ is  an integral  $\QQ$-algebra, 
and that the $A$--group scheme $G$ is commutative.
Let $X$ be a $G$--torsor over $A$. Then $X$ is trivial 
if and only if  $X \times_{A} A(\!(t)\!)$ is trivial. 
\end{stheorem}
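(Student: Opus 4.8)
The plan is to prove the nontrivial implication: assuming $X\times_A A(\!(t)\!)$ is trivial, i.e. $X\bigl(A(\!(t)\!)\bigr)\neq\emptyset$, I will produce a point of $X(A)$. If $X\bigl(A[[t]]\bigr)\neq\emptyset$ we are done at once, since the specialization $t\mapsto 0$ sends $X\bigl(A[[t]]\bigr)$ to $X(A)$; so I may assume $X\bigl(A[[t]]\bigr)=\emptyset$ and fix $x\in X\bigl(A(\!(t)\!)\bigr)\setminus X\bigl(A[[t]]\bigr)$. Unlike the wound situation of Corollary \ref{cor_wound_tors}, where the residue of Proposition \ref{prop_torsor} produces a homomorphism that directly contradicts the hypothesis, here such homomorphisms genuinely exist and must be used \emph{constructively}: the idea is to exploit $\res(x)$ to strictly lower the order of the pole of $x$, and to iterate until the pole disappears, i.e. until the modified point lies in $X\bigl(A[[t]]\bigr)$, at which stage I specialize. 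The commutativity of $G$ is what makes this feasible: by Lemma \ref{lem_residue}.(1) it removes the conjugation, so the residue becomes additive under the torsor action.

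First I fix the inductive invariant. After a faithfully flat cover $B/A$ with $X_B\cong G_B$ and a closed embedding $G_B\hookrightarrow\SL_{N,B}$ (as in the discussion preceding Corollary \ref{cor_wound}), the point $x$ becomes an element $g\in\SL_N\bigl(B(\!(t)\!)\bigr)$, and I let $d(x)\geq 1$ be the (infimum of the) pole order $-V_t(g)$; thus $d(x)=0$ means precisely $x\in X\bigl(A[[t]]\bigr)$. Because $A$ is a $\QQ$-algebra, Corollary \ref{cor_main} guarantees that $r(x)\in\ZZ$, which makes the exponent bookkeeping below exact. The key computation is that for any $h\in G\bigl(A(\!(t)\!)\bigr)$ the defining relation $\sigma_r(x\cdot h)=(x\cdot h)\cdot\bigl(h^{-1}g_r(x)\sigma_r(h)\bigr)$ together with commutativity yields $g_r(x\cdot h)=g_r(x)\cdot\bigl(h^{-1}\sigma_r(h)\bigr)$; hence, whenever the indices of $x$, $h$ and $x\cdot h$ all coincide, the residues add: $\res(x\cdot h)=\res(x)+\res(h)$ in $\Hom(\GG_a,G)$ (resp. $\Hom(\GG_m,G)$).

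Now comes the cancellation step. If $r(x)=0$, then $\mu:=\res(x)\colon\GG_m\to G$ is a nontrivial cocharacter by Proposition \ref{prop_torsor}.(3); the element $\mu(t)\in G\bigl(A[t,t^{-1}]\bigr)\subseteq G\bigl(A(\!(t)\!)\bigr)$ has residue $-\mu$, as in Example \ref{example_basic}.(1), so $x\cdot\mu(t)$ has matching leading behaviour removed. If $r(x)=e>0$, then $\nu:=\res(x)\colon\GG_a\to G$ is a nontrivial homomorphism, and using that $e$ is a positive integer invertible in $A$ I form $\nu(\beta\,t^{-e})\in G\bigl(A(\!(t)\!)\bigr)$ for a suitable constant $\beta$, modelled on Example \ref{example_basic}.(3) where $t^{-e}$ has residue $-e\,u$; replacing $x$ by $x\cdot\nu(\beta t^{-e})^{-1}$ then cancels the leading pole. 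In both cases the additivity above shows the residue of the new point $x'$ at level $r(x)$ vanishes; since a point outside $X\bigl(A[[t]]\bigr)$ always has nontrivial residue (Proposition \ref{prop_torsor}), the leading pole of $x'$ must strictly decrease, $d(x')<d(x)$, or $x'$ already lies in $X\bigl(A[[t]]\bigr)$. Iterating finitely often reaches a point of $X\bigl(A[[t]]\bigr)$, and specialization produces the desired point of $X(A)$.

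The main obstacle is the inductive step itself: establishing that the cancellation \emph{strictly} lowers $d$ rather than merely shifting the pole, and that the constant $\beta$ and exponent $e$ can always be chosen so that the leading term of $x$ is matched exactly. This is exactly where the hypothesis that $A$ is a $\QQ$-algebra is indispensable. In characteristic $p$ the residue of an element such as $t^{-p}$ is the Frobenius-twisted homomorphism $u\mapsto -u^p$ (Example \ref{example_basic}.(4)), which cannot be cancelled against a linear additive residue, and the index need not even be integral (Example \ref{example_GL2}.(b)); over a $\QQ$-algebra both pathologies vanish, the index is an integer by Corollary \ref{cor_main}, every relevant integer is invertible, and the homomorphisms $\GG_a\to G$ are linear enough to be matched termwise. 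A secondary point is the descent needed to define $d(x)$: the cancelling elements $h$ live over $A$, so one must check that they lower the pole order computed over the cover $B$, i.e. that the induction is carried out $A$-rationally though $X$ is trivialized only over $B$. This is harmless because the residue and its additivity were established directly for torsors over $A$ in Proposition \ref{prop_torsor} and Lemma \ref{lem_residue}.
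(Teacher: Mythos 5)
Your cancellation step is essentially the paper's: for commutative $G$ the residue is additive along the torsor action, so twisting $x$ by $\res(x)(\tfrac{1}{r}t^{-r})$ (resp.\ by $\res(x)(t^{-1})^{-1}$ when $r=0$) produces a point $x'$ whose residue at level $r(x)$ vanishes, and the invertibility of $r$ and the integrality of the index are exactly where the hypothesis $\QQ\subseteq A$ enters. The gap is in your termination argument. You induct on the pole order $d(x)=-V_t(g)$ computed after a trivializing cover, and you assert that after cancellation either $d(x')<d(x)$ or $x'\in X\bigl(A[[t]]\bigr)$, deducing this from the fact that a point outside $X\bigl(A[[t]]\bigr)$ has nontrivial residue. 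That inference is a non sequitur, and the asserted dichotomy is false: take $G=\GG_a\times\GG_m$ (embedded block-diagonally in some $\SL_N$) and $x=(t^{-1},\,t^{-5})$. Then $r(x)=1$, the residue lives entirely in the $\GG_a$ factor, and the cancellation yields $x'=(0,\,t^{-5})$, which still has pole order $5$ and still lies outside $X\bigl(A[[t]]\bigr)$. What the cancellation actually controls is the \emph{index}: since $g_{r}(x')\in G(A^u[[t]])$ with trivial specialization at $t=0$, Proposition \ref{prop_torsor}.(2)--(3) forces $r(x')<r(x)$, while $d$ may well stagnate.

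The repair is exactly the paper's argument, and you already have every ingredient for it: discard $d$ entirely and use $r(x)$ as the decreasing quantity. Since $X(A)=\emptyset$ forces all points of $X\bigl(A(\!(t)\!)\bigr)$ to have index in $\ZZ_{\geq 0}$ (Corollary \ref{cor_main}, via the $\QQ$-algebra hypothesis), one may choose $x$ of \emph{minimal} index $r$; if $r\geq 1$ the twisted point $x'$ has $r(x')<r$, contradicting minimality, and if $r=0$ one gets $r(x')<0$, i.e.\ $x'\in X\bigl(A[[t]]\bigr)$, whence $X(A)\neq\emptyset$ by specialization. As written, your induction does not close, because the quantity you claim decreases need not decrease.
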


\begin{proof}
We assume that the $G$--torsor $X \times_{A} A(\!(t)\!)$ is trivial,
that is $X(A(\!(t)\!)) \not = \emptyset$. 
If $X(A)=\emptyset$, then the indices of points of $X(A(\!(t)\!))$
are all non-negative integers. Let $r$ be the minimal value of those
indices and  consider a point $x \in X(A(\!(t)\!))$ such that $r(x)=r$.

\noindent{\it Additive case: $r \geq 1$.}
We consider the point $y=x \, .\, \res(x)(-\frac{1}{r t^r})$.
According to Example \ref{example_basic}.(2), we have
\begin{eqnarray} \label{jo}
 - \frac{1}{t^r} + \sigma_r( \frac{1}{t^r})
& = &r u + r \epsilon
\end{eqnarray}
with $\epsilon \in t A^u[[t]]$.
Since $G$ is commutative, we have

\begin{eqnarray} \nonumber
 g_r(y)& =& g_r(x) \,  \bigl(\res(x)(-\frac{1}{r t^r})\bigr)^{-1}
 \, \sigma_r\bigl(\res(x)(-\frac{1}{r t^r})\bigr) \\ \nonumber
 &=& g_r(x) \,  \res(x)(-u t + \epsilon)  
\end{eqnarray}
by reporting the above identity \eqref{jo}.
We have $g_r(y) \in G\bigl(A^u[[t]] \bigr)$
so that $r(y) \leq r$. The reduction mod $t$ of
$g_r(y)$ is trivial by construction so that 
Proposition \ref{prop_torsor}.(2) shows that $r(y)<r$,
which contradicts the minimality of $r$.

\smallskip

\noindent{\it Multiplicative  case: $r =0$.} Similarly we use 
the homomorphism $\res(x): \GG_{m,A} \to G$ 
for constructing the point $y=x \, . \, \res(x)(t^{-1})^{-1}$.
Since $G$ is commutative, we have

\begin{eqnarray} \nonumber
 g_0(y)& =& g_0(x) \,  \res(x)(t^{-1}) \,
 \sigma_0\bigl(\res(x)(t^{-1}) \bigr)^{-1} \\ \nonumber
 & =& g_0(x) \,  \res(x)(\lambda)^{-1}  .
\end{eqnarray}
We have $g_0(y) \in G\bigl(A^+[[t]] \bigr)$
so that $r(y) \leq 0$. The reduction mod $t$ of
$g_r(y)$ is   $\res(x)(\lambda) \res(x)(\lambda^{-1})=1$ 
 so that  Proposition \ref{prop_torsor}.(2) shows that $r(y)<0$.
Thus $y \in X(A[[t]])$ and $X(A)$ is not empty.

\end{proof}

\section{The case of fields, II}
Let $k$ be a field.
A $k$--variety is a separated $k$--scheme of finite type.
If $X$ is a $k$--variety, we denote by $X^\dagger$
its largest geometrically reduced $k$--subscheme \cite[C.4.1]{CGP}.

A {\it $k$--compactification} of a $k$--variety $V$ 
is an open immersion  $j:V \to V^c$ where $V^c$ is a proper 
$k$--variety.

We use intensively the notions of the book \cite{CGP}, for example
the different radicals of a smooth  algebraic  $k$-group $H$.

\smallskip

$\ru(H)$: $k$--unipotent radicial, i.e. the largest smooth
connected unipotent normal $k$--subgroup of $H$.

\smallskip

$\ra(H)$: $k$--radical, i.e. i.e. the largest smooth
connected solvable normal $k$--subgroup of $H$.

\smallskip

$\rus(H)$: split unipotent $k$--radical, i.e. the largest smooth
connected $k$--split unipotent normal $k$--subgroup of $H$.

\smallskip

$\rs(H)$: split $k$--radical i.e. the largest smooth
connected $k$--split solvable normal $k$--subgroup of $H$.

\subsection{Pseudo-complete varieties}

We say that a  $k$-variety is {\it pseudo-complete}  if  $X(A)=X(K)$ for
each discretly valued   $k$-ring  $A$  with fraction 
fields $K$ and whose residue field is separable over  $k$
\cite[app. C.1.]{CGP}. To check this property, 
it is enough to consider the  case of a 
complete discretly valued $k$-ring
whose residue field is separably closed and separable over $k$
({\it ibid}, C.1.2.). In particular,
$X$ is  pseudo-complete if and only if 
$X_{k_s}$ is pseudo-complete.

We say that  $X$ is   $k$--pseudo-complete 
if  $X(A)=X(F)$ for each discretly valued  $F$-ring  $A$ 
of fraction field $F$ and of residue field $k$.
Similarly,  it is enough to consider the case 
 of $O=k[[t]]$ whose fraction field is denoted by $K=k(\!(t)\!)$.
 In particular an affine algebraic $k$--group $H$
 is $k$--pseudo-complete if and only if $H$ is $k$-wound according to Corollary \ref{cor_wound}.

\begin{slemma}\label{lem_fib}  Let  $H$ be a $k$--algebraic group. We assume 
that $H$ is $k$-pseudo-complete.
Let  $f: Y \to X$ be a  $H$--torsor where $Y$, $X$ 
are  $k$--varieties.
If $X$ is $k$--pseudo-complete,
then $Y$ is $k$--pseudo-complete. 
\end{slemma}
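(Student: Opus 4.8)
The plan is to verify the defining equality in the model situation $O=k[[t]]$, $K=k(\!(t)\!)$, i.e. to prove $Y(k[[t]])=Y(k(\!(t)\!))$; injectivity of the restriction map is automatic because $Y$ is separated and $k[[t]]$ is a domain with fraction field $k(\!(t)\!)$, so only surjectivity is at issue. I would first record that, by the discussion preceding the lemma, the hypothesis ``$H$ is $k$-pseudo-complete'' means precisely that $H$ is $k$-wound, so that $H(k[[t]])=H(k(\!(t)\!))$ and Corollary \ref{cor_wound_tors} is available for $H$-torsors over the base $A=k$. Given $y\in Y(k(\!(t)\!))$, I push it down by setting $x:=f(y)\in X(k(\!(t)\!))$; since $X$ is $k$-pseudo-complete we get $x\in X(k[[t]])$, that is, an integral arc $x\colon \Spec k[[t]]\to X$.

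Pulling back the torsor $f$ along $x$ yields an $H_{k[[t]]}$-torsor $P:=x^{*}Y$ over $\Spec k[[t]]$, and the point $y$ supplies a generic section $y_\eta\in P(k(\!(t)\!))$; thus $P$ is an $H$-torsor over the complete (hence henselian) discrete valuation ring $k[[t]]$ that is trivial over its fraction field. The core of the argument is the sublemma that such a $P$ is already trivial over $k[[t]]$. Here I would use that $H$ is smooth (which is part of the standing CGP conventions and makes $f$, and hence $P\to\Spec k[[t]]$, smooth): for a smooth affine group over the henselian ring $k[[t]]$ the reduction map $H^{1}(k[[t]],H)\to H^{1}(k,H)$ is a bijection, its inverse being base change from the special fibre, since for two $H$-torsors the scheme of torsor isomorphisms is smooth over $k[[t]]$ and any isomorphism over the closed point lifts by Hensel. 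Consequently $P\cong \overline P\times_{k}k[[t]]$ with $\overline P:=P\otimes_{k[[t]]}k$, and therefore $P_\eta\cong \overline P\times_{k}k(\!(t)\!)$. The generic triviality of $P$ now says exactly that the $H$-torsor $\overline P$ over the field $k$ becomes trivial over $k(\!(t)\!)$, so Corollary \ref{cor_wound_tors} applied with $A=k$ to the $k$-wound group $H$ shows $\overline P$ is trivial over $k$; lifting the resulting $k$-point along the smooth $P\to\Spec k[[t]]$ gives $P\cong H_{k[[t]]}$.

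It remains to recover the specific point $y$. Fixing a trivialization $P\cong H_{k[[t]]}$, the generic section $y_\eta$ corresponds to some $h\in H(k(\!(t)\!))$, and woundness of $H$ gives $H(k(\!(t)\!))=H(k[[t]])$, so $h\in H(k[[t]])$ and hence $y_\eta\in P(k[[t]])$. Unwinding, this integral section of $P$ is an element of $Y(k[[t]])$ lying over $x$ whose image in $Y(k(\!(t)\!))$ is $y$, whence $y\in Y(k[[t]])$ by separatedness; this establishes surjectivity and completes the proof. The one genuinely external ingredient, and the step I expect to require the most care to state cleanly, is the smooth-henselian reduction isomorphism $P\cong \overline P\times_{k}k[[t]]$; once it is in hand, everything else is formal, resting on two uses of $k$-woundness, one through Corollary \ref{cor_wound_tors} and one through the equality $H(k[[t]])=H(k(\!(t)\!))$.
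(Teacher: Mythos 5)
Your proof is correct, and its skeleton is exactly the one the paper uses: push $y$ down to $X(K)$, use $k$-pseudo-completeness of $X$ to promote the image to a point $x\in X(O)$, pull the torsor back to an $H_O$-torsor $P$ over $O$ with a generic section, prove that $P$ is trivial, and finally extend the section using $H(O)=H(K)$. The one place where you diverge is the justification that $P$ is trivial. The paper asserts that $H\times_k O$ is $O$-wound and invokes Corollary \ref{cor_wound_tors} directly over the base $O$; read literally, that corollary compares $A$-points with $A(\!(t)\!)$-points for a \emph{fresh} variable $t$, whereas what is needed here is a comparison between $O$ and its own fraction field $K=O[1/t]$, so the citation is somewhat loose. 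You instead use smoothness of $H$ and Hensel's lemma to write $P\cong \overline P\times_k O$, and then apply Corollary \ref{cor_wound_tors} in the form in which it is actually proved, with $A=k$ and $A(\!(t)\!)=K$. This is cleaner and matches the corollary's statement exactly; the price is the explicit smoothness hypothesis on $H$, which the lemma does not state but which holds in the paper's only application of it (there $H=P/\rs(P)$ with $P$ smooth) and which the paper itself relies on elsewhere when it asserts that $c_G\colon H^1(O,G)\to H^1(k,G)$ is bijective for smooth $G$. Both arguments also share the implicit assumption that $H$ is affine, which is needed to identify ``$k$-pseudo-complete'' with ``$k$-wound'' and to apply Corollary \ref{cor_wound_tors} at all.
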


\begin{proof} We are given  $y \in Y(K)$ where $K=k(\!(t)\!)$. There exists
  a point  $x \in X(O)$ such that  $\pi(y)=x_K$.
  Let $\gZ= \pi^{-1}(x)$, this is a $H\times_k O$-torsor
and $y \in \gZ(K)$. Since the $O$--group scheme
$H\times_k O$ is $O$-wound, Corollary \ref{cor_wound_tors} shows that 
the $O$--torsor $\gZ$ is trivial, that is there is 
a  $H \times_k O$-equivariant isomorphism $\phi: \gZ \simlgr H \times_k O$.
Since $H$ is $k$-wound, we have $H(O)=H(K)$ hence
$\gY(O)= \gY(K)$. We conclude that $y \in Y(O)$.
\end{proof}

\subsection{Affine $k$--groups}

We recall the notations $K=k(\!(t)\!)$ and $O=k[[t]]$.

\begin{sproposition}\label{prop_parabolic} Let $G$ be an affine
algebraic $k$--group. Let $P$  
 be a minimal pseudo-parabolic  $k$--subgroup of  
$(G^\dagger)^0$.  We put  $R=\rs(G)$.

\smallskip

\begin{numlist}
 
\item The  quotient $Q=P/R$ is $k$--wound.

\smallskip

\item  For each  $G$-torsor $E$ over $k$  trivialized over  $k_s$, $E/R$ is 
$k$--pseudo-complete and  $E(k) \not = \emptyset$ if and only if 
$(E/R)(K) \not  = \emptyset$.

\smallskip 

\item  Let  $E$ be a  $G$--torsor. If   $E(K) \not =\emptyset$, then  
 $E(k) \not = \emptyset$.
 \end{numlist}

\end{sproposition}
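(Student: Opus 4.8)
The plan is to treat the three assertions in turn: assertion (1) by the structure theory of pseudo-reductive groups in \cite{CGP}, assertion (2) by the torsor-theoretic fibration supplied by Lemma \ref{lem_fib}, and then (3) as a consequence of (2).

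For (1), I would first record that $R=\rs(G)=\rs\bigl((G^\dagger)^0\bigr)$ is a normal $k$-split solvable subgroup of the smooth connected group $(G^\dagger)^0$, and that it is contained in every pseudo-parabolic subgroup (its unipotent part $\rus(G)$ lies in every pseudo-parabolic by definition, and its maximal split torus lies in the maximal split torus carried by the minimal $P$); hence $Q=P/R$ makes sense. Since $P$ is a \emph{minimal} pseudo-parabolic of $(G^\dagger)^0$, its Levi-type structure shows that the maximal pseudo-reductive quotient of $P/\rus(P)$ is $k$-anisotropic, and factoring out the remaining split central torus leaves a quotient with no nontrivial $k$-split torus. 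Invoking the \cite{CGP} characterization of woundness for pseudo-reductive groups — an anisotropic pseudo-reductive group contains neither $\GG_a$ nor $\GG_m$ — one concludes that $Q$ is $k$-wound; equivalently, by Corollary \ref{cor_wound}, $Q$ is $k$-pseudo-complete.

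For (2), the key object is the tower $E\to E/R\to E/P$. Because $R$ is normal in $P$, the map $E/R\to E/P$ is a torsor under $Q=P/R$, while $E/P$ is a $k$-form of $(G^\dagger)^0/P$; using the hypothesis that $E$ is trivialized over $k_s$, one gets $(E/P)_{k_s}\cong (G/P)_{k_s}$, and since $G/P$ is pseudo-complete by \cite{CGP}, so is $E/P$, whence $E/P$ is $k$-pseudo-complete. As $Q$ is $k$-pseudo-complete by (1), Lemma \ref{lem_fib} applies to the $Q$-torsor $E/R\to E/P$ and gives that $E/R$ is $k$-pseudo-complete. The stated equivalence now follows: if $E(k)\neq\emptyset$ the image in $(E/R)(k)\subset (E/R)(K)$ is nonempty; conversely $k$-pseudo-completeness yields $(E/R)(O)=(E/R)(K)\neq\emptyset$, so reduction modulo $t$ produces a point of $(E/R)(k)$ whose fibre under the $R$-torsor $E\to E/R$ is an $R$-torsor over $k$. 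Since $R$ is $k$-split solvable it is special, so $H^1(k,R)=1$, this fibre is trivial, and $E(k)\neq\emptyset$.

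For (3), I would reduce to (2) by trivializing $E$ over $k_s$. When $G$ is smooth this is automatic, since a torsor under a smooth group over the separably closed field $k_s$ has a rational point; thus every $G$-torsor falls under (2), and $E(K)\neq\emptyset$ forces $(E/R)(K)\neq\emptyset$ (the image of a $K$-point), so $E(k)\neq\emptyset$ by (2). For general, possibly non-smooth $G$ one disposes of the wound case directly via Corollary \ref{cor_wound_tors} — a nontrivial residue of a point of $E(K)$ would produce a $\GG_a$ or $\GG_m$ inside $G$ — and otherwise reduces the infinitesimal contribution to the smooth situation, e.g. by iterating Frobenius and using that the index and residue are insensitive to it (Lemma \ref{lem_sorites}.(3) and the following corollary). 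The main obstacle is precisely the structural input feeding (1)–(2): one must know that $G/P$ is pseudo-complete and that a minimal pseudo-parabolic modulo the split radical is wound, both resting on the pseudo-reductive machinery of \cite{CGP}; and in (3) the genuinely delicate point is the passage from smooth $G$, where $k_s$-triviality of torsors is free, to arbitrary affine $G$, i.e. controlling torsors under the non-smooth part. The remaining steps — the fibration, the descent along the split solvable $R$ through $H^1(k,R)=1$, and the index bookkeeping — are routine given the residue formalism already developed.
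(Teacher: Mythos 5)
Your part (2) reproduces, essentially verbatim, the paper's argument in the smooth case (the tower $E\to E/R\to E/P$, pseudo-completeness of $E/P$ from $k_s$-triviality together with \cite[C.1.6]{CGP}, Lemma \ref{lem_fib} applied to the $Q$-torsor $E/R\to E/P$, and finally $H^1(k,R)=1$), and that part is sound. The genuine gap is in (3), and you have correctly located it without closing it: one must pass from $E(K)\neq\emptyset$ to $E(k_s)\neq\emptyset$ for an \emph{arbitrary} affine $G$, and neither of your two suggestions achieves this. Corollary \ref{cor_wound_tors} only applies when $G$ is wound, which the general $G$ is not; and iterating Frobenius does not reduce torsors under $G$ to torsors under a smooth group, because the kernel of $F_e\colon G\to G^{(e)}$ is infinitesimal and $H^1(K,\cdot)$ of an infinitesimal group is typically nonzero (e.g.\ $H^1(K,\alpha_p)=K/K^p\neq 0$), so triviality of the induced $G^{(e)}$-torsor says nothing about the original one; moreover Lemma \ref{lem_sorites}.(3).(ii) concerns indices of elements of $G\bigl(A(\!(t)\!)\bigr)$, not descent of rational points of torsors. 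The missing idea --- the one non-routine step in the paper's proof of (3) --- is to exploit that $K=k(\!(t)\!)$ is \emph{separable} over $k$: hence $E(K)=E^\dagger(K)\neq\emptyset$, so the largest geometrically reduced subscheme $E^\dagger$ is nonempty, hence generically smooth, hence has a $k_s$-point, and a fortiori $E(k_s)\neq\emptyset$, which puts $E$ in the scope of (2). (Correspondingly, the paper's proof of (2) contains a ``general case'' that you omit: when $G$ is not smooth one replaces $E$ by the $G^\dagger$-torsor $E^\dagger$ and uses $(E/G^\dagger)(K)=\{\bullet\}$ to identify $(E/R)(K)$ with $(E^\dagger/R)(K)$, since \cite[C.1.6]{CGP} gives pseudo-completeness of $G/P$ only for smooth $G$.)

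There is also a real, though smaller, gap in your (1): $Q=P/R$ is \emph{not} pseudo-reductive, so the criterion ``anisotropic pseudo-reductive $\Rightarrow$ no $\GG_a$ and no $\GG_m$'' cannot be invoked for $Q$ itself, and the absence of split tori in a group never rules out copies of $\GG_a$ (any split unipotent group is a counterexample). Ruling out $\GG_a\subseteq Q$ is the whole content of (1), and your sketch never uses the minimality of $P$ for that purpose. The paper's route is: a copy of $\GG_a$ in $Q$ cannot lie in the wound solvable kernel $\ra(P)/\rs(P)$ of $Q\to P/\ra(P)$, hence maps nontrivially (with image again $\GG_a$) to the pseudo-reductive quotient $P/\ra(P)$; by \cite[C.3.8]{CGP} it then lies in $\rus(Q')$ for a pseudo-parabolic $Q'$ of $P/\ra(P)$, whose preimage is a proper pseudo-parabolic of $P$ and hence of $(G^\dagger)^0$, contradicting minimality. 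The $\GG_m$ statement is likewise obtained not from anisotropy of a Levi but from maximality of $\rs(P)$: a central split subtorus of $P/\rs(P)$ would lift to a normal $k$-split solvable subgroup of $P$ strictly containing $\rs(P)$. Your argument is repairable along these lines, but as written the step from ``some pseudo-reductive quotient is anisotropic'' to ``$Q$ is wound'' does not follow.
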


\begin{sremarks}{\rm (a) 
In the perfect field case, 
statement (1) follows of a compactification result 
due to  Borel-Tits \cite[th. 8.2]{BoT}.

\smallskip

\noindent (b) Statement  (3) was known for groups 
of multiplicative type \cite[th. 4.1]{CTS2}. 
 }
\end{sremarks}

\begin{proof}
(1) We have   $\rs(P) \subseteq \ra(P)$ and $\ra(P)/\rs(P)$ 
is a  smooth connected solvable $k$--group \cite[th. 5.4]{C} which
fits in the exact sequence \[1 \to \ra(P)/\rs(P) \to P/\rs(P)
\to P/\ra(P) \to 1.\]
Let us show first that $P/\rs(P)$ does not contain 
any proper  central split $k$--subtorus.   Let $S \subset 
P/\rs(P)$ be a central  split $k$--subtorus;
its inverse image $M$ in $P$ is normal and  is an extension 
of $S$ by $\rs(P)$. Then $M$ is 
solvable $k$--split and 
by maximality of $\rs(P)$, it follows that  $S=1$.

If  $P/\rs(P)$ admits  a $k$--subgroup  $U$ isomorphic to  
$\GG_a$, $U$ cannot be a $k$--subgroup of the wound $k$--group
 $\ra(P)/\rs(P)$. It follows that the morphism $U \to P/\ra(P)$
is non trivial. Since any non--trivial quotient of  $\GG_a$ 
is isomorphic to  $\GG_a$ \cite[IV.2.1.1]{DG}, 
it follows that
$P/\ra(P)$ admits a $k$--subgroup  $V$ isomorphic to  $\GG_a$.
 We use now the fact that the quotient $P/\ra(P)$ is pseudo-reductive.
This implies that $V \subseteq \rus(Q')$ where $Q'$
is a  pseudo--parabolic $k$--subgroup of  $P/\ra(P)$ \cite[C.3.8]{CGP}.
The inverse image  $P'$ of $Q'$ in $P$ is  
a (proper) pseudo-parabolic $k$--subgroup of $P$ ({\it ibid}, 2.2.10),
and in the same time a pseudo-parabolic $k$--subgroup of 
 $(G^\dagger)^0$ ({\it ibid}, 3.5.5).
This leads to a contradiction with the minimality of  $P$.
Thus  $Q=P/\rs(P)$ is $k$-wound.

\smallskip

\noindent (2) \noindent{\it The smooth case.} Let  $E$ be a $G$--torsor
over $k$ and consider the quotient $Y= E/R$.
We claim that $X=E/P$ is pseudo-complete (and a fortiori $k$--pseudo-complete).
Since $E(k_s) \not = \emptyset$, we have 
$(E/P)_{k_s} \cong (G/P)_{k_s}$. 
We recall that  $(G/P)_{k_s}$ is  pseudo-complete \cite[C.1.6]{CGP},
so that  $(E/P)_{k_s}$ is pseudo-complete.  It follows that
$X=E/P$ is pseudo-complete.
We consider the  morphism $f: Y \to X= E/P$ which is a $Q=P/R$--torsor.
Since $Q$ is $k$--pseudo-complete, Lemma \ref{lem_fib} enables us to conclude that  
$Y$ is  $k$--pseudo-complete.

 \smallskip

\noindent{\it General case.} Since $E(k_s)\not = \emptyset$,
$E^\dagger$ is a  $G^\dagger$-torsor which satisfies
$E^\dagger \wedge^{G^\dagger} G = E$.
We consider the  morphism 
$G/R \to G/G^\dagger$ and its variant $q: E/R \to E/G^\dagger$.
We have  $(E/G^\dagger)(k)= (E/G^\dagger)(O)= (E/G^\dagger)(K)=\{ \bullet \}$.
The pre-image of this point is the  $k$--scheme $E^\dagger/R$ (actually 
$X^\dagger$). It follows that  $(E^\dagger/R)(K)= (E/R)(K)$. Appealing
to the smooth case, we get  $(E^\dagger/R)(O)= (E^\dagger/R)(K)$.
Thus $(E/R)(O)= (E/R)(K)$.

\smallskip

For the second property, we assume that  $X(k) \not = \emptyset$.
 The morphism   $E \to E/R=X$ is a $R$--torsor with  $R$ solvable and 
$k$--split. Since  $H^1(k,R)=1$ (by  d\'evissage from the cases of  $\GG_m$
 and $\GG_a$), we conclude that 
 $E(k) \not = \emptyset$.
 
\smallskip

\noindent (3) Let  $E$ be a $G$--torsor such that  $E(K) \not = \emptyset$.
Since  $K$ is  separable over  $k$, we have   $E^\dagger(K)=E(K)$ 
whence  $E^\dagger \not = \emptyset$. But  $E^\dagger$ is geometrically reduced
and is then generically smooth
\cite[32.25.7]{St}.  It follows that 
$E^\dagger(k_s) \not = \emptyset$ ({\it ibid}, 32.25.6).
A fortiori we have  $E(k_s) \not =\emptyset$ 
and the statement becomes a straightforward consequence of  (2).
\end{proof}

\subsection{Torsors over fields}
Let $G$ be a $k$--algebraic group. We continue with the notations
$K=k(\!(t)\!)$ and $O=k[[t]]$. Our purpose is to 
discuss the kernels of mappings
$$
a_G: H^1(O,G) \to H^1(K,G), \enskip
b_G: H^1(k,G) \to H^1(K,G), \enskip
c_G: H^1(O,G) \to H^1(k,G).
$$
If $G$ is smooth, the specialisation $c_G: H^1(O,G) \to H^1(k,G)$ 
is bijective according to Hensel's lemma \cite[XXIV.8.1]{SGA3}.

\begin{stheorem}\label{thm_main2} 
 The map $b_G:  H^1(k,G) \to H^1(K, G)$ is injective.
\end{stheorem}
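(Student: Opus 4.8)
We want to show $b_G \colon H^1(k,G) \to H^1(K,G)$ is injective, where $K = k(\!(t)\!)$. This is a statement about when two $G$-torsors over $k$ become isomorphic after base change to $K$. The plan is to interpret injectivity twistedly: to show that if a $G$-torsor $E$ over $k$ satisfies $E(K) \neq \emptyset$ (i.e. $E_K$ is trivial), then already $E(k) \neq \emptyset$, and then to deduce the general injectivity statement from this special case by the standard twisting argument.

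**Reduction to triviality via twisting.**

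Recall that $H^1(k,G)$ classifies $G$-torsors over $k$ up to isomorphism, with the trivial class corresponding to torsors having a $k$-point. The map $b_G$ is not a homomorphism of groups (the cohomology set is only pointed, and $G$ need not be abelian), so injectivity cannot be checked merely on the kernel. The standard device is: given two classes $[E_1], [E_2] \in H^1(k,G)$ with $b_G([E_1]) = b_G([E_2])$, I would twist $G$ by $E_1$ to form the inner form $G' = {}^{E_1}\!G$, and use the twisting bijection $H^1(k,G) \simeq H^1(k,G')$ which carries $[E_1]$ to the distinguished point and is compatible with base change to $K$. Under this bijection the hypothesis $b_G([E_1]) = b_G([E_2])$ becomes the statement that a single $G'$-torsor $E'$ over $k$ (corresponding to $[E_2]$) becomes trivial over $K$; reducing to the claim that such $E'$ is already trivial over $k$, i.e. $E'(k) \neq \emptyset$.

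**The core geometric input.**

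The heart of the matter is therefore exactly Proposition \ref{prop_parabolic}.(3): for any affine algebraic $k$-group $G'$ and any $G'$-torsor $E'$, if $E'(K) \neq \emptyset$ then $E'(k) \neq \emptyset$. This is the statement already proved in the excerpt via the theory of minimal pseudo-parabolic subgroups and the wound quotient $Q = P/R$, using the pseudo-completeness transfer (Lemma \ref{lem_fib}) and the residue machinery packaged in Corollary \ref{cor_wound_tors}. So the bulk of the analytic work has been done; for the theorem I would simply invoke Proposition \ref{prop_parabolic}.(3) applied to the twisted group $G'$ and torsor $E'$ to conclude $E'(k) \neq \emptyset$, hence $[E_2]$ equals the distinguished class in $H^1(k,G')$, hence $[E_1] = [E_2]$ in $H^1(k,G)$.

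**The main obstacle.**

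The delicate point is not the geometry but the bookkeeping of the twisting bijection and its functoriality under base change $k \to K$. One must verify that twisting by $E_1$ carries $b_G$ to $b_{G'}$ compatibly, so that the equality of images over $K$ genuinely translates into $E'_K$ being \emph{trivial} (not merely isomorphic to some other fixed torsor). This compatibility is the standard torsion-bijection formalism (see e.g. Serre's Galois cohomology), and the only subtlety is that $G$ is an arbitrary affine algebraic group, not necessarily smooth or connected; but since Proposition \ref{prop_parabolic} is proved for general affine algebraic $k$-groups — handling the non-smooth case through $G^\dagger$ and the $k_s$-triviality hypothesis — the inner form $G'$ falls within the same generality, and the argument goes through. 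Thus the expected obstacle, the full force of the field-arithmetic, has already been absorbed into Proposition \ref{prop_parabolic}, and the remaining step is the formal reduction by twisting.
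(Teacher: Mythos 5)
Your reduction of injectivity to the triviality statement via twisting is correct and is worth making explicit: given $[E_1],[E_2]$ with the same image, the scheme $\Isom_G(E_1,E_2)$ is a torsor under the inner form $G'=\Aut_G(E_1)$, it acquires a $K$-point by hypothesis, and Proposition \ref{prop_parabolic}.(3) applied to $G'$ (which is again an affine algebraic $k$-group, in the same generality) yields a $k$-point, i.e. $[E_1]=[E_2]$. The paper compresses all of this into the single sentence ``in the affine case, this is Proposition \ref{prop_parabolic}.(3)'', so on the affine case you are in agreement with, and somewhat more careful than, the paper.

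However, there is a genuine gap: Theorem \ref{thm_main2} is stated for an arbitrary algebraic $k$-group $G$, not only an affine one, and the paper's proof spends most of its effort precisely on the non-affine cases, which your argument does not touch. Proposition \ref{prop_parabolic}.(3) is proved only for affine $G$, so your core input simply does not apply when $G$ is, say, an abelian variety or an extension of one. The paper handles this by Chevalley's theorem: for $G$ connected there is an exact sequence $1\to H\to G\to G/H\to 1$ with $H$ affine and $G/H$ an abelian variety; triviality of $\ker(b_{G/H})$ follows from the valuative criterion of properness applied to torsors under the (proper) abelian variety together with specialization $O\to k$, and the two cases are glued by the d\'evissage Lemma \ref{lem_proper}.(2), which also needs the input $ (G/H)(O)=(G/H)(K)$ (again properness). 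A further d\'evissage through $1\to G^0\to G\to G/G^0\to 1$ treats the non-connected case. None of these steps is automatic from the affine case, so as written your proof establishes the theorem only for affine $G$.
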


\begin{slemma} \label{lem_proper}  
Let  $1 \to G_1 \to G_2 \to G_3 \to 1$ be 
an exact sequence of $k$--algebraic groups such that  $G_3(O)=G_3(K)$. 

\smallskip

\begin{numlist}
 \item We assume that  $G_3$ is smooth.
If  $\ker(a_{G_1}) \subseteq \ker(c_{G_1})$  and $\ker(a_{G_3}) 
\subseteq \ker(c_{G_3})$, then
 $\ker(a_{G_2}) \subseteq \ker(c_{G_2})$.

\item If  $\ker(b_{G_1})=1$ and $\ker(b_{G_3})=1$, then $\ker(b_{G_2})=1$.
\end{numlist}

\end{slemma}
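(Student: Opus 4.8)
The plan is to run a dévissage along the exact sequence, comparing the three restriction/specialization maps over $O=k[[t]]$, $K=k(\!(t)\!)$ and $k$. Write $p_G\colon H^1(k,G)\to H^1(O,G)$ for the pull-back along $k\hookrightarrow O$; then $c_G\circ p_G=\mathrm{id}$ and $b_G=a_G\circ p_G$, so that $\ker(b_G)=1$ is exactly the assertion $p_G(H^1(k,G))\cap\ker(a_G)=\{1\}$. For each $R\in\{O,K,k\}$ I will use the exact sequence of pointed sets $G_2(R)\xrightarrow{\pi}G_3(R)\xrightarrow{\delta_R}H^1(R,G_1)\xrightarrow{i_*}H^1(R,G_2)\xrightarrow{\pi_*}H^1(R,G_3)$, the natural action of $G_3(R)$ on $H^1(R,G_1)$ whose orbits are the fibres of $i_*$ and which satisfies $\delta_R(\gamma)=\gamma\cdot *$, and the compatibility of all of this with the base changes $O\to K$, $O\to k$, $k\to O$. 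The only non-formal input is the hypothesis $G_3(O)=G_3(K)$, which guarantees that any boundary element produced over $K$ already lives over $O$, hence can be specialized to $k$.

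For part (1), take $\xi\in H^1(O,G_2)$ with $a_{G_2}(\xi)=1$. First $a_{G_3}(\pi_*\xi)=1$, so $c_{G_3}(\pi_*\xi)=1$ by hypothesis; since $G_3$ is smooth, $c_{G_3}$ is bijective (Hensel, as recalled before Theorem \ref{thm_main2}), whence $\pi_*\xi=1$ already in $H^1(O,G_3)$. Thus $\xi=i_*(\xi_1)$ for some $\xi_1\in H^1(O,G_1)$. Next $a_{G_1}(\xi_1)$ dies in $H^1(K,G_2)$, so $a_{G_1}(\xi_1)=\delta_K(\gamma)$ with $\gamma\in G_3(K)=G_3(O)$. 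Because $\gamma$ lives over $O$, I may replace $\xi_1$ by $\xi_1':=\gamma^{-1}\cdot\xi_1$: this keeps $i_*(\xi_1')=\xi$ and gives $a_{G_1}(\xi_1')=\gamma^{-1}\cdot\delta_K(\gamma)=1$, so $\xi_1'\in\ker(a_{G_1})\subseteq\ker(c_{G_1})$. Hence $c_{G_2}(\xi)=i_*\big(c_{G_1}(\xi_1')\big)=1$, as wanted.

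For part (2) I run the same chain starting from $\zeta\in H^1(k,G_2)$ with $b_{G_2}(\zeta)=1$, applied to $\xi:=p_{G_2}(\zeta)$ (so $a_{G_2}(\xi)=1$ and $c_{G_2}(\xi)=\zeta$). Here $\pi_*\xi=p_{G_3}(\pi_*\zeta)$ and $b_{G_3}(\pi_*\zeta)=a_{G_3}(\pi_*\xi)=1$, so $\pi_*\zeta=1$ by the hypothesis $\ker(b_{G_3})=1$, giving $\pi_*\xi=1$ and $\xi=i_*(\xi_1)$ with $\xi_1=p_{G_1}(\zeta_1)$, $\zeta=i_*(\zeta_1)$. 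As above $a_{G_1}(\xi_1)=b_{G_1}(\zeta_1)=\delta_K(\gamma)$ with $\gamma\in G_3(O)=G_3(K)$, and modifying $\zeta_1$ by the constant $\bar\gamma:=\gamma(0)\in G_3(k)$ I reduce to the case where the boundary is $\mu:=\bar\gamma^{-1}\gamma\in G_3(O)$ with $\mu(0)=1$. It then suffices to prove $\delta_K(\mu)=1$: for then the corrected $\zeta_1$ lies in $p_{G_1}(H^1(k,G_1))\cap\ker(a_{G_1})$, hence is trivial by $\ker(b_{G_1})=1$, and $\zeta=i_*(\zeta_1)=1$.

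The main obstacle is precisely this last point: showing that the congruence boundary $\mu\in G_3(O)$, $\mu(0)=1$, lifts into $\pi(G_2(K))$, equivalently that the $G_1$-torsor $\delta_O(\mu)$ over $O$ becomes trivial over $K$. When $G_1$ is smooth this is immediate, since $c_{G_1}$ is then injective (Hensel) and $c_{G_1}(\delta_O(\mu))=\delta_k(\mu(0))=1$ forces $\delta_O(\mu)=1$, a fortiori $\delta_K(\mu)=1$; this already settles part (2) in characteristic $0$, where every $G_i$ is smooth. In general (non-smooth $G_1$) the specialization map $c_{G_1}$ has a kernel and the statement is no longer formal; I expect to handle it by a homotopy/contraction argument in the $t$-direction, spreading $\mu$ out using $\mu(0)=1$ and invoking the rigidity of $H^1(-,G_1)$ along the contraction, which is exactly where the hypothesis $G_3(O)=G_3(K)$ (woundness of $G_3$, Corollary \ref{cor_wound}) is indispensable, together with the twisting formalism needed to make the non-abelian bookkeeping of the fibres of $i_*$ rigorous.
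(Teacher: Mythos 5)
Your part (1) is correct and is essentially the paper's own argument: the same diagram chase along the exact sequence of pointed sets, with the hypothesis $G_3(O)=G_3(K)$ used to transport the twisting element $\gamma\in G_3(K)$ back into $G_3(O)$, where it can act on $H^1(O,G_1)$ and normalize $\xi_1$ into $\ker(a_{G_1})\subseteq\ker(c_{G_1})$.

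Part (2), however, is not a complete proof: you explicitly leave open the decisive step, namely that $\delta_K(\mu)=1$ for $\mu\in G_3(O)$ with $\mu(0)=1$, and the ``homotopy/contraction argument'' you invoke for non-smooth $G_1$ is never carried out. The obstruction you have isolated is genuine and structural. In part (1) the class to be corrected lives in $H^1(O,G_1)$, so an element of $G_3(K)=G_3(O)$ acts on it directly; in part (2) the class $\zeta_1$ lives in $H^1(k,G_1)$, where only $G_3(k)$ acts, and $G_3(k)\to G_3(K)$ is not surjective (for a wound or proper $G_3$ one only has surjectivity of the specialization $G_3(O)\to G_3(k)$). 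Reducing $\gamma$ at $t=0$ leaves exactly the discrepancy $\mu$, and the required vanishing $\delta_K(\mu)=1$ amounts to the inclusion $\ker(c_{G_1})\subseteq\ker(a_{G_1})$ on boundary classes, which is not among the hypotheses and fails formally for non-smooth $G_1$: already for $G_1=\mu_p$ in characteristic $p$, the unit $1+t$ yields a class in $H^1(O,\mu_p)$ that is trivial at $t=0$ but nontrivial over $K$. (That example does not satisfy $G_3(O)=G_3(K)$, so it is not a counterexample to the lemma, but it shows the step cannot be purely diagrammatic.)

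For what it is worth, the paper itself disposes of part (2) with the single sentence that it is ``of the same vein,'' and the literal same-vein argument runs into precisely the difficulty you describe; so you have gone further than the written proof in locating where the extra input is needed. But as it stands your proposal establishes (2) only when $G_1$ is smooth (in particular in characteristic $0$), not in the stated generality, and the sketch offered for the general case is not an argument.
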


\begin{proof}
 (1)  It is a classical ``d\'evissage'', see for example the proof 
of  \cite[Th. 2.1]{CTO}.
The above sequence gives rise to the exact commutative diagram of pointed sets 
$$
\begin{CD}
 G_3(k) @>{\varphi_k}>> H^1(k,G_1)  @>{\alpha_k}>> H^1(k,G_2) @>{\beta_k}>> H^1(k,G_3) \\
@AA{}A  @A{}AA @A{}AA @A{\wr}AA\\
 G_3(O) @>{\varphi_O}>> H^1(O,G_1)  @>{\alpha_O}>> H^1(O,G_2) @>{\beta_O}>> H^1(O,G_3) \\
@VV{\cong}V  @V{a_{G_1}}VV @V{a_{G_2}}VV @V{a_{G_3}}VV\\
 G_3(K) @>{\varphi_K}>> H^1(K,G_1) 
 @>{\alpha_K}>> H^1(K,G_2) @>{\beta_K}>> H^1(K,G_3) .\\
\end{CD}
$$
We are given an element $\gamma_2 \in \ker\bigl( H^1(O,G_2)  \to H^1(K,G_2) \bigr)$.
Since $G_3$ is smooth, $a_{G_3}$ has trivial kernel and hence there  exists a class
$\gamma_1 \in H^1(O,G_1)$ which maps on  $\gamma_2$ and such that 
$\gamma_{1,K}$ belongs to the image of the characteristic map $\varphi_K$.

We use now the right action of  $G_3(O)$ on  $H^1(O,G_1)$ 
and the fibers of  $\alpha_O$ 
are the orbits for that action \cite[III.3.3.3]{Gd}.
The same fact holds for  $G_3(K)$ and $\alpha_K$.
It follows that there exists $g \in G_3(K)$ 
such that  $\gamma_{1,K} \, . \, g_3 =1  \in H^1(K,G_1) $.
On the other hand, we have $g_3 \in G_3(O)$
hence $\gamma_{1,O} \, . \, g_3 \in \ker\bigl(  H^1(O,G_1) \to H^1(K,G_1) \bigr)$.
We can assume that  $\gamma_1 \in \ker(a_{G_1})$.
One of our assumption is that  $\ker(a_{G_1}) \subseteq  \ker(c_{G_1})$
Thus
$c_{G_2}( \gamma_2) =c_{G_2}( \alpha(\gamma_1)) = \alpha_k( c_{G_1}(\gamma_1))=1 \in 
 H^1(k,G_2)$.

The second fact is of the same vein.
 \end{proof}

We can proceed to the proof of Theorem \ref{thm_main2}.

\begin{proof} In the affine case, this is  
 Proposition \ref{prop_parabolic}.(3).
We assume $G$ connected. According to 
 Chevalley \cite[VI$_B$.12.5.(5)]{SGA3}, there is an exact sequence
$1 \to H \to G \to G/H \to 1$ where  $G/H$ is an abelian variety and  $H$ is affine.
By application of the valuative criterion of properness to $A$--torsors,
we have that   $\ker(b_A)=1$.
On the other hand, we have  $\ker(b_H)=1$, so that Lemma  \ref{lem_proper}.(2) 
shows that  $\ker(b_G)=1$.  To reach the non-connected case is 
of the same vein by using the exact sequence $1 \to G \to G \to G/G^0 \to 1$.
\end{proof}

\section{Link with Gabber's compactifications} \label{section_gabber}

\subsection{The statements}
The following two results have been announced by Gabber \cite{O}
and will not be used in the paper.
Let $k$ be a field. If $G$ is an algebraic $k$--group, 
we denote by $L'G$ its largest  smooth affine  connected  $k$--subgroup,
by $L_{\overline k}G$ the largest  largest  smooth affine  connected
subgroup of $G_{\overline{k}}$ and
by  $\overline{L}G$ the smallest $k$--subgroup such that 
$\overline{L}G \supset L_{\overline k}G$.
We consider the following property (due to Gabber)
$$
(*) \qquad \qquad  \hbox{All tori
of $G_{\overline k}$ are in $(G^\dagger)_{\overline k}$}.
\qquad \qquad\qquad \qquad\qquad \qquad \qquad \qquad\qquad 
$$

\begin{stheorem}\label{thm_gabber1}
 Let $G$ be a $k$-group of finite type and $P$ a pseudo-parabolic subgroup
of $L'G$. Then $G/P$
has an equivariant projective compactification, compatible with
$(G^\dagger)^0$, $L'G$,  $\overline{L}G$, with a $G$-linearized line bundle relatively ample
for $G/P \to  G/  \overline{L}G$ 
such that the boundary has no separable point and if $(*)$ holds there is no $k_s$-orbit
contained set-theoretically in the boundary.
\end{stheorem}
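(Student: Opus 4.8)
The plan is to build $\overline{G/P}$ as a relative orbit closure attached to a single cocharacter and then to detect its boundary through that cocharacter. Since $L'G$ is smooth connected affine, the pseudo-parabolic $P$ is of the form $P=P_{L'G}(\lambda)$ for a cocharacter $\lambda:\GG_m\to L'G$ \cite[2.2]{CGP}. First I would compactify the fibre. Choosing a faithful representation $L'G\hookrightarrow\GL(V)$ and grading $V=\bigoplus_n V_n$ by the weights of $\lambda$, the subgroup $P$ is exactly the stabiliser of the filtration $V_{\geq\bullet}$, so $L'G/P$ embeds as a locally closed subscheme of a product of Grassmannians $\prod_i\Gr(V)$, hence of some $\PP(W)$ through a Segre--Plücker line bundle $\cL$. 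Its schematic closure is a projective, $L'G$--equivariant compactification $\overline{L'G/P}\subset\PP(W)$ carrying the very ample $\cL$.

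Second I would globalise. The chain $P\subseteq L'G\subseteq\overline LG\subseteq G$ presents $G/P\to G/\overline LG$ as a fibre bundle with fibre $\overline LG/P$; extending the previous step from $L'G$ to $\overline LG$, which is affine (being generated by the conjugates of the affine group $L_{\overline k}G$) and for which a representation is available after the finite faithfully flat base change realising $L_{\overline k}G$ — exactly the descent mechanism used for the index and residue — yields an $\overline LG$--equivariant projective compactification of the fibre together with a relatively very ample bundle. Forming the associated bundle $\overline{G/P}=G\times^{\overline LG}\overline{(\overline LG/P)}$ over $G/\overline LG$ gives the required $G$--equivariant projective compactification with a $G$--linearised $\cL$ that is relatively ample for $G/P\to G/\overline LG$. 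Equivariance is automatic from the orbit--closure description, and compatibility with $(G^\dagger)^0$, $L'G$ and $\overline LG$ follows by restricting the same filtration, which each of these subgroups preserves; this is the bookkeeping part.

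Third, the boundary $\partial=\overline{G/P}\setminus(G/P)$ must carry no separable point, and this is equivalent to the $k$--pseudo--completeness of $G/P$. Indeed a point $x\in\partial(k_s)$ would be the special value of an arc $\gamma\in(\overline{G/P})(k_s[[t]])$ whose generic fibre lies in $(G/P)(k_s(\!(t)\!))\setminus(G/P)(k_s[[t]])$; pseudo--completeness forces $\gamma$ to extend through $G/P$, and separatedness of $\overline{G/P}$ then puts $x$ in $G/P(k_s)$, a contradiction. The required pseudo--completeness is \cite[C.1.6]{CGP}; in the spirit of this paper one can also reprove it (for minimal $P$) by lifting $\gamma$ to $L'G(k_s(\!(t)\!))$ and applying the residue construction of Proposition \ref{prop_index} and Theorem \ref{thm_main}, which would produce a non--trivial $\GG_a\to Q$ or $\GG_m\to Q$ for $Q=P/\rs(P)$ — impossible, since $Q$ is $k$--wound by Proposition \ref{prop_parabolic}.(1) and wound groups admit no such homomorphism by Corollary \ref{cor_wound}.

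Finally, under $(*)$ I would upgrade this to the absence of any $k_s$--orbit inside $\partial$. The boundary strata of the cocharacter compactification are indexed by the degenerations of $\lambda$, and each carries a residual torus action, so an entire $G_{k_s}$--orbit lying in $\partial$ would be stabilised by a nontrivial torus of $G_{\overline k}$. Hypothesis $(*)$ forces every such torus into $(G^\dagger)_{\overline k}$, so the stratum already appears in the $(G^\dagger)^0$--compactification, where the compatibility established above, together with the pseudo--completeness of $(G^\dagger)^0/P$, forbids a full orbit in the boundary. I expect this last step to be the main obstacle: translating the torus condition $(*)$ into effective control of the cocharacter--indexed boundary strata — in particular ruling out the inseparable orbits produced by cocharacters not contained in $G^\dagger$ — is the genuinely delicate point, whereas the embedding, the equivariance, and the separable--point statement are comparatively formal.
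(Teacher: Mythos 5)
The first thing to say is that the paper contains no proof of this statement: Theorem \ref{thm_gabber1} is one of two results explicitly presented as ``announced by Gabber \cite{O}'' and stated to ``not be used in the paper'', so there is no argument of the authors to compare yours against; you are attempting to reconstruct an unpublished theorem of Gabber from a short Oberwolfach announcement. Judged on its own terms, your proposal has genuine gaps at every stage except the third. In Step 1, $P=P_{L'G}(\lambda)$ is \emph{contained in} the scheme-theoretic stabilizer of the weight filtration of an arbitrary faithful representation, but in characteristic $p$ it need not equal it (the stabilizer can acquire extra infinitesimal or even smooth pieces); one must construct a representation for which equality holds, and this is where the compatibility with $(G^\dagger)^0$, $L'G$ and $\overline{L}G$ --- which you defer as ``bookkeeping'' --- actually lives. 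In Step 2, $\overline{L}G$ is only defined as the smallest $k$-subgroup whose base change contains $L_{\overline k}G$; it need be neither smooth nor affine, the quotient $G/\overline{L}G$ and the contracted product $G\wedge^{\overline{L}G}\overline{(\overline{L}G/P)}$ require a representability argument (the paper's own Lemma \ref{lem_descent} needs a linearized relatively ample bundle as \emph{input}, which is exactly what you are trying to produce, so the order of construction matters), and ``affine because generated by conjugates of an affine group'' is not a valid inference for the minimal $k$-descent of $L_{\overline k}G$.

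Step 3 is essentially sound as far as it goes: absence of separable boundary points does follow from $k$-pseudo-completeness by the arc argument you sketch, and the alternative proof via the residue of Proposition \ref{prop_index} together with Proposition \ref{prop_parabolic}.(1) and Corollary \ref{cor_wound} is exactly in the spirit of this paper; note however that \cite[C.1.6]{CGP} covers $G/P$ only for $G$ smooth affine, whereas here $G$ is an arbitrary finite type $k$-group, so you still need to propagate pseudo-completeness through the fibration over $G/\overline{L}G$. Step 4 is not a proof: the assertion that a $G_{k_s}$-orbit contained in the boundary must be stabilized by a nontrivial torus of $G_{\overline k}$ is precisely the hard content of the theorem, and you give no argument for it beyond the unproved claim that boundary strata ``carry a residual torus action''. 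You correctly identify this as the main obstacle; as it stands the proposal establishes at most the fibre compactification in good cases and the separable-point statement, and should not be presented as a proof of the theorem.
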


\begin{stheorem}\label{thm_gabber2}
 Let $G$ be a $k$-group of finite type. Then $G$ admits a projective
compactification $G \hookrightarrow G^c$ with a left action of $G$, 
right action of $G^\dagger$,  an invariant
ample effective divisor with support $G^c \setminus G$ if $G$ is affine, such that

\smallskip

\begin{numlist}
  \item For every separable extension $K$ of $k$ the following are equivalent:
  \begin{romlist}
   \item $G_K$ has a subgroup isomorphic to $\GG_a$ or $\GG_m$;
   \item $G(K) \not  =G^c(K)$;
   \item $\overline{L'G}(K) \not  = L'G(K)$;
   \item There is an $K$-orbit for the left action of $G$ on $G^c$
   admitting a separable point and contained in $G^c \setminus G$.
   \end{romlist}

\item For every separably closed separable extension $K$ of $k$, 
$\overline{G}(K) = G^\dagger(K) =
G(K) \overline{(L'G)}(K)$, and if $(*)$ holds every $K$-orbit of the action of $G$ on 
$G^c$ has a $K$-point.

\end{numlist}
\end{stheorem}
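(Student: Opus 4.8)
The plan is to derive both the existence of $G^c$ and the list of equivalences from the residue formalism developed above, using the flag-variety compactification of Theorem \ref{thm_gabber1} as the one piece of genuinely geometric input. First I would reduce to the case where $G$ is affine: exactly as in the proof of Theorem \ref{thm_main2}, Chevalley's theorem gives an extension $1 \to H \to G \to \mathcal{A} \to 1$ with $H$ affine and $\mathcal{A}$ an abelian variety, and since $\mathcal{A}$ is already proper while $G \to \mathcal{A}$ is an $H$-bundle, a compactification of $G$ is assembled by spreading an $H$-equivariant compactification of $H$ over $\mathcal{A}$; the valuative criterion disposes of the $\mathcal{A}$-direction, so all the arithmetic happens in the affine fibre.

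For affine $G$ I would construct $G^c$ from a faithful representation $\rho \colon G \hookrightarrow \GL(V)$, sending $g \mapsto [\rho(g):1]$ into $\PP(\End(V) \oplus k)$ and taking the Zariski closure. Left translation by $G$ and right translation by $G^\dagger$ act linearly on $\End(V)$ and fix the final coordinate, so they extend to $G^c$ and preserve the complement; one then arranges that $G^c \setminus G$ is cut out by an invariant ample effective divisor. To secure the finer compatibilities with $L'G$ and $\overline{L}G$, and the control on separable points in the boundary, I would feed in Theorem \ref{thm_gabber1} applied to a minimal pseudo-parabolic $P$ of $L'G$: the projection $G \to G/P$ transports the good boundary behaviour of the flag compactification of $G/P$ onto $G$, with $\overline{L}G$ appearing through the relative ampleness over $G/\overline{L}G$.

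The equivalences in (1) I would prove through a single bridge furnished by $\res$. Since $G$ is dense in the proper variety $G^c$, the valuative criterion says that every boundary point of $G^c(K)$ is the reduction of a curve in $G$, that is, of an element $g \in G\bigl(K(\!(s)\!)\bigr) \setminus G\bigl(K[[s]]\bigr)$ whose limit escapes into $G^c \setminus G$. For (i) $\Rightarrow$ (ii), a nontrivial homomorphism $\GG_a \to G_K$ or $\GG_m \to G_K$ evaluates to such an escaping element, whose limit in $G^c$ is a $K$-point of the boundary. Conversely, for (ii) $\Rightarrow$ (iv) $\Rightarrow$ (i): a boundary $K$-point is a separable point sitting in a $G$-orbit inside the invariant boundary, and feeding an approaching curve into Theorem \ref{thm_main} returns a nontrivial $\GG_a$ or $\GG_m$ in $G_K$. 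Read over the base $K$, this is precisely Corollary \ref{cor_wound}: (i) holds if and only if $G_K$ is not $K$-wound, i.e. $G\bigl(K[[s]]\bigr) \neq G\bigl(K(\!(s)\!)\bigr)$. Finally (iii) I would extract from the radical analysis of Proposition \ref{prop_parabolic}: the gap on $K$-points between $L'G$ and $\overline{L}G$ is governed exactly by smooth connected affine subgroups that cease to be wound over the separable closure, which is the same as the appearance of $\GG_a$ or $\GG_m$ in (i).

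For part (2), over a separably closed $K$ every $K$-point is separable, so $G^\dagger(K) = G(K)$, and the factorization $G^\dagger(K) = G(K)\,\overline{(L'G)}(K)$ would follow from a Bruhat-type decomposition relative to $P$ transported through $G^c$. The clause that under $(*)$ every $K$-orbit on $G^c$ carries a $K$-point is the delicate end: the hypothesis that all tori of $G_{\overline k}$ lie in $(G^\dagger)_{\overline k}$ forbids $\res$ from producing a cocharacter with an irrational limit, so each boundary orbit is reached by a cocharacter already defined over $K$ and hence meets $G^c(K)$. The genuine obstacle throughout is the construction of $G^c$ with all the listed fine properties — invariance, ampleness of the boundary divisor, and above all the absence of separable boundary points except where $(*)$ forces separable orbit representatives. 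This geometric core is Gabber's, and I expect to import Theorem \ref{thm_gabber1} essentially as a black box rather than reprove it; the contribution of the residue machinery is to convert that geometry into the clean dichotomy of (1) via Theorem \ref{thm_main} and Corollary \ref{cor_wound}.
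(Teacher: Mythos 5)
There is no proof in the paper to compare yours against: Theorem \ref{thm_gabber2} is stated in \S \ref{section_gabber} as one of two results ``announced by Gabber \cite{O}'' with the explicit caveat that they ``will not be used in the paper.'' The authors give no argument for it, so your proposal has to stand or fall on its own.

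On its own it has genuine gaps. The central one is the construction of $G^c$. Theorem \ref{thm_gabber1} compactifies the flag variety $G/P$, not $G$; your plan to ``transport the good boundary behaviour'' of that compactification onto $G$ does not address the fibres of $G \to G/P$, which are copies of the affine group $P$ and carry all of the arithmetic difficulty (the wound unipotent and anisotropic pieces). The fallback construction --- closing up the image of $g \mapsto [\rho(g):1]$ in $\PP(\End(V)\oplus k)$ --- does not in general produce a boundary free of separable points: for a wound unipotent group over an imperfect field the naive linear closure attached to an arbitrary faithful representation can perfectly well have rational points at infinity, and choosing the representation and the closure so that this is excluded is precisely the content of Gabber's theorem, which you cannot simultaneously import as a black box and claim to be proving. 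Two further steps are asserted without argument: the identity $G^\dagger(K) = G(K)\,\overline{(L'G)}(K)$ in part (2) is attributed to ``a Bruhat-type decomposition relative to $P$ transported through $G^c$,'' which is not a proof; and the clause under $(*)$ is justified by saying the hypothesis ``forbids $\res$ from producing a cocharacter with an irrational limit,'' a phrase with no precise meaning in the formalism of the paper. What is sound in your sketch is the bridge for part (1): the equivalence of (i) with $G\bigl(K[[s]]\bigr) \neq G\bigl(K(\!(s)\!)\bigr)$ is exactly Corollary \ref{cor_wound}, and passing between escaping arcs and separable boundary points via the valuative criterion is the right mechanism linking (i), (ii) and (iv) --- but it presupposes a $G^c$ with the stated boundary properties, which is the part you have not supplied.
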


 \subsection{Refinement of Theorem \ref{thm_main2}}

 \begin{stheorem}\label{thm_star}
 Let $G$ be an affine algebraic $k$--group which satisfies
 the property $(*)$.
 Then $\ker\bigl( a_G\bigr)=1$.
 \end{stheorem}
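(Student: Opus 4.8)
The plan is to deduce the injectivity of $a_G$ from the already established injectivity of $b_G$ (Theorem \ref{thm_main2}), isolating the non-smoothness of $G$ by a dévissage and letting property $(*)$ intervene only on the infinitesimal part. First I would dispose of the smooth case. If $G$ is smooth, then $c_G\colon H^1(O,G)\to H^1(k,G)$ is bijective by Hensel's lemma \cite[XXIV.8.1]{SGA3}; writing the inclusion $k\to K$ as $k\to O\to K$ gives $b_G=a_G\circ\iota^{*}$ with $\iota^{*}=c_G^{-1}$, whence $a_G=b_G\circ c_G$ and $\ker(a_G)=1$ follows at once from $\ker(b_G)=1$. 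Note that $(*)$ is automatic here, since $G^\dagger=G$.

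Next I would set up the dévissage engine by rerunning the commutative diagram in the proof of Lemma \ref{lem_proper}, but bootstrapping genuine triviality instead of the inclusion $\ker(a)\subseteq\ker(c)$. Suppose $1\to G_1\to G_2\to G_3\to 1$ with $G_3$ smooth and $G_3(O)=G_3(K)$, i.e. $G_3$ is $k$-wound (Corollary \ref{cor_wound}). Given $\gamma_2\in\ker(a_{G_2})$, its image $\beta_O(\gamma_2)$ lies in $\ker(a_{G_3})=1$, so $\gamma_2=\alpha_O(\gamma_1)$ for some $\gamma_1\in H^1(O,G_1)$; then $\gamma_{1,K}\in\ker(\alpha_K)=\mathrm{im}(\varphi_K)$, say $\gamma_{1,K}=\varphi_K(q)$ with $q\in G_3(K)=G_3(O)$. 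Twisting $\gamma_1$ by $q\in G_3(O)$ stays in the fibre $\alpha_O^{-1}(\gamma_2)$ and trivializes the generic fibre, so the twist lies in $\ker(a_{G_1})=1$ and $\gamma_2=1$. Thus $\ker(a_{G_1})=\ker(a_{G_3})=1$ propagates to $\ker(a_{G_2})=1$, the woundness $G_3(O)=G_3(K)$ being exactly what turns $\ker(a)\subseteq\ker(c)$ into triviality.

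With this engine I would imitate the reduction used for Theorem \ref{thm_main2}. The structure theory of Proposition \ref{prop_parabolic} — a minimal pseudo-parabolic $P$ of $(G^\dagger)^0$ and the split radical $R=\rs(G)$ — furnishes a $k$-wound quotient $Q=P/R$, while $R$ is an iterated extension of copies of $\GG_a$ and $\GG_m$, for which $H^1(O,-)=1$ and hence $\ker(a)=1$ is immediate; feeding these smooth wound quotients into the engine reduces the smooth layer to such split solvable pieces. The residual task is the infinitesimal part, encoded by $1\to G^\dagger\to G\to G/G^\dagger\to 1$ with $G/G^\dagger$ infinitesimal. Here $(*)$ is essential: it forces every torus of $G_{\overline{k}}$ into $(G^\dagger)_{\overline{k}}$, so that $G/G^\dagger$ carries no toral data escaping the smooth part and its layers are the harmless $\mu_p$ (multiplicative) and $\alpha_p$ (additive); for each of these $\ker(a)=1$ is a one-line valuation computation, since an element of $O^{\times}$ that becomes a $p$-th power in $K$ is already one in $O$, and similarly for $\alpha_p$.

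I expect the main obstacle to be precisely the passage through this non-smooth, infinitesimal part. The engine requires the quotient to be smooth \emph{and} wound, whereas the natural normal series presents the infinitesimal $G/G^\dagger$ as a quotient rather than a subgroup; the fix is to treat $G^\dagger$ as the distinguished smooth subgroup, prove $a$ injective on $H^1(O,G/G^\dagger)$ directly for infinitesimal quotients compatible with $(*)$, and transport this along the sequence. The delicate point — and the genuine use of the hypothesis — is showing that $(*)$ excludes the counterexamples, namely geometric tori that fail to descend and would otherwise yield $O$-torsors trivial over $K$ but not over $O$; once $(*)$ confines all such tori to $G^\dagger$, the remaining infinitesimal layers are transparent and the dévissage closes.
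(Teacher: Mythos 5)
Your first two steps are sound: the smooth case via $a_G=b_G\circ c_G$ (Hensel plus Theorem~\ref{thm_main2}) is exactly how the smooth layer is handled, and your strengthened d\'evissage is a legitimate variant of Lemma~\ref{lem_proper}. The gap is in the third step, and it is not a technicality but the whole difficulty of the theorem. The quotient $Q=G/G^\dagger$ is \emph{not} infinitesimal in general: $G^\dagger$ is the largest geometrically reduced subscheme, and over an imperfect field $Q$ can be a positive-dimensional affine group with $Q(k_s)=\{1\}$. The paper's own examples in \S\ref{subsec_examples} are of exactly this shape --- in example (a) one has $G^\dagger=1$ while $\dim G=p-1$, and in example (b) the one-dimensional group $H$ satisfies $H^\dagger=1$ yet $\ker(a_H)\neq 1$. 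So there is no composition series of $Q$ with quotients $\mu_p$ and $\alpha_p$, your ``one-line valuation computation'' does not apply, and the assertion that the infinitesimal layers are ``transparent'' is refuted by the counterexamples: the statement you are reducing to, namely $\ker\bigl(a_{G/G^\dagger}\bigr)=1$, is precisely the hard part and is \emph{false} without $(*)$.

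Correspondingly, your account of how $(*)$ intervenes is only a heuristic with no mechanism behind it. In the paper, $(*)$ enters through Gabber's equivariant compactification (Theorem~\ref{thm_gabber1}, \cite[th.~5.2]{GGMB}): $Q$ admits a $G$-equivariant projective compactification $Q^c$ whose boundary contains no orbit with a separable point. Given a torsor $\gX$ over $O$ trivial over $K$, one forms the contracted products $\gZ=\gX\wedge^G \gQ$ and $\gZ^c=\gX\wedge^G\gQ^c$; properness of $\gZ^c$ extends the unique $K$-point to an $O$-point, and the no-orbit-in-the-boundary property forces its special fibre to land in $Z$ rather than in $Z^c\setminus Z$. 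This is what reduces $\gX$ to a $G^\dagger$-torsor, after which the smooth case concludes. Nothing in your proposal plays the role of this compactification/specialization argument, so the reduction does not close.
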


\begin{proof} Once again we use the notations
$O=k[[t]]$ and $K=k(\!(t)\!)$. We consider the exact sequence of fppf  $k$-sheaves
$$
1 \to G^\dagger \to G \to G/G^\dagger \to 1. 
$$
We put $Q=  G/G^\dagger$, $\gG= G \times_k O$, $\gQ= Q \times_k O$.
According to Gabber \cite[th. 5.2]{GGMB}, $Q$ admits a $G$--equivariant 
compactification 
 $Q^c$ such that the boundary $\partial Q= Q^c \setminus Q$ 
 has no $G$-orbit over any separable field extension of $k$. Furthermore
 $Q^c$ carries a  $G$-linearized  line bundle. We put $\gQ^c= Q^c \times_k O$.
Let  $\gX$ be a  $\gG$--torsor such that  $\gX_K$ is trivial, that is
$\gX(K) \not = \emptyset$.
We consider the contracted products $\gZ= \gX \wedge^{G} \gQ$
 and $\gZ^c= \gX \wedge^{\gG} \gQ^c$ 
(which are  representable according to Lemma \ref{lem_descent}) over
$O$. We put $X= \gX_k$, $Z= \gZ_k$ and $Z^c= \gZ^c_k$.

\begin{sclaim} $Z(k)=Z^c(k)$.
\end{sclaim}
 
The argument is similar with that of the proof of \cite[Lemme 6.1]{GGMB}.
We consider the $k$--group scheme $G'=\Aut_G(X)$, it acts (on the left) 
on the morphism $X \to Z$. Let $z \in Z^c(k)$. According to \cite[2.3.2]{GGMB}, 
the $k$--orbit $T$ of $Z$ under $G'$ corresponds canonically to
 a $k$-orbit $T_0$ of $G$ on $Q^c$.
 By assumption, we have $T_0 \subset Q$, so that $T \subset Z$.
 In particular $z \in Z(k)$ and the Claim is proven.

We have compatible isomorphisms
$\gZ^c_K \simlgr \gQ^c \times_k K$ and $\gZ_K \simlgr \gQ \times_k K$.
Since $\{ \bullet \}= \gQ(K)= \gQ^c(K)$, we have that 
$\{ z_0 \}= \gZ(K)= \gZ^c(K)$.
The Claim implies that the specialization of $z_0$ in $Z^c(k)$
 belongs to $Z(k)$. It follows that $z_0 \in \gZ(O)$ 
 so that $\{ z_0 \}= \gZ(O)$.
Hence the  $\gG$--torsor $\gX$ admits a reduction $\gF$ 
to the subgroup $G^\dagger \times_k O$.
The exact commutative diagram
 \[
\xymatrix{
 \{\bullet \}= \gQ(O) \ar[r] \ar[d] &H^1(O,G^\dagger) \ar[d]^{a_{G^\dagger}} \ar[r]& H^1(O,G) \ar[d]^{a_G} \\
 \{\bullet \}= \gQ(K) \ar[r] \ar[r]& H^1(K,G^\dagger)  \ar[r]& H^1(K,G)
}
\] enables us to conclude that  $\gX$ is a trivial  $\gG$-torsor.
 
\end{proof}

\section{Applications and examples}

\subsection{Applications}
We shall apply the main result to nice torsors.

\begin{scorollary} Let  $G$ be an algebraic  $k$--group.

\smallskip

\begin{numlist}
 
\item We assume that  $G$ acts (on the left) on a $k$--variety $X$. 
Let  $x,x' \in X(k)$. Then  $x$ and $x'$ are $G(k)$--conjugated 
if and only if  $x_K$ and $x'_K$ are $G(K)$-conjugated. 

\smallskip

\item Let $H,H'$ be  $k$-subgroups of  $G$. Then  $H$ and $H'$ are 
$G(k)$--conjugated if and only if $H_K$ and $H'_K$ are $G(K)$--conjugated.
  
\end{numlist}
  
\end{scorollary}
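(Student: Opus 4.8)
The forward implications are trivial, since $G(k)\subseteq G(K)$; all the content lies in the converse direction in each case. The plan is to realise each conjugacy problem as a triviality problem for a torsor under an appropriate $k$-subgroup $S$ of $G$, and then to invoke the injectivity of $b_S$ furnished by Theorem \ref{thm_main2} (applied to $S$, which is again an algebraic $k$-group). The unifying observation is that the relevant transporter is a pseudo-torsor under the relevant stabiliser, and that over a field a \emph{nonempty} pseudo-torsor is automatically a genuine fppf torsor, flatness over $k$ being free of charge.

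For (1), I would set $S=\Stab_G(x)$, a closed $k$-subgroup scheme of $G$, and let $T=\mathrm{Transp}_G(x,x')$ be the transporter, i.e. the functor $R\mapsto\{g\in G(R): g\cdot x_R=x'_R\}$. Since $X$ is separated, the point $x':\Spec k\to X$ is a closed immersion, so $T$ is represented by the closed subscheme $G\times_{X}\Spec k$ of $G$, namely the pullback of the orbit map $g\mapsto g\cdot x$ along $x'$. Right multiplication by $S$ makes $T$ a pseudo-$S$-torsor. The hypothesis that $x_K$ and $x'_K$ are $G(K)$-conjugate says exactly that $T(K)\neq\emptyset$; choosing such a point trivialises $T_K\cong S_K$, so $T$ is nonempty and hence a finite-type fppf $S$-torsor over $k$, whose class $[T]\in H^1(k,S)$ dies in $H^1(K,S)$. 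Theorem \ref{thm_main2} then gives $[T]=1$, i.e. $T(k)\neq\emptyset$, producing $g\in G(k)$ with $g\cdot x=x'$.

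For (2), the argument is formally identical, with $S=N_G(H)$ the normaliser and $T=\mathrm{Transp}_G(H,H')=\{g: gHg^{-1}=H'\}$ the conjugation transporter, on which $N_G(H)$ acts on the right, making $T$ a pseudo-$N_G(H)$-torsor. Conjugacy of $H_K$ and $H'_K$ over $K$ means $T(K)\neq\emptyset$; as before this upgrades $T$ to an fppf $N_G(H)$-torsor trivial over $K$, and injectivity of $b_{N_G(H)}$ (Theorem \ref{thm_main2}) forces $T(k)\neq\emptyset$, yielding the desired conjugator in $G(k)$.

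The only genuinely technical point, and the step I expect to be the main obstacle, is the representability of the transporters as $k$-schemes of finite type together with the verification that they carry the asserted pseudo-torsor structure: for (1) this is immediate from separatedness of $X$, but the conjugation transporter in (2) requires the representability results for transporters of finite-type subgroup schemes (SGA3). Once representability is secured, the passage from a nonempty pseudo-torsor to an fppf torsor over the field $k$ is automatic, and the remainder is the formal cohomological argument above.
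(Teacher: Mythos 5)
Your proposal is correct and follows essentially the same route as the paper: realise the transporter as a torsor under the stabiliser (resp.\ the normaliser, via the strict transporter), note it is nonempty because it has a $K$-point, and conclude by the injectivity of $b$ from Theorem \ref{thm_main2}. The extra care you take over representability and the pseudo-torsor/torsor upgrade is sound and only slightly more detailed than the paper's treatment, which handles this by citing the representability of the strict transporter.
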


\begin{proof} (1) We assume that  $x$,  $x'$ are $G(K)$--conjugated. 
We consider the transporter
$E= \bigl\{ g \in G \, \mid \,  g.x=x'\bigr\}$.
Since $E(K) \not = \emptyset$, $E$ is non-empty and is a  
 torsor under the stabilizer $G_x=\bigl\{ g \in G \, g.x=x\bigr\}$.
 Theorem \ref{thm_main2} yields that  $E(k) \not = \emptyset$.
Thus  $x$ and $x'$ are $G(k)$--conjugated.

\smallskip

\noindent (2) We assume that $H$ and $H'$ are  $G(K)$--conjugated.
We denote by $N=N_G(H)$ the normalizer of  $H$ 
in  $G$. We consider the strict transporter $T$ of $H$ to  $H'$ as defined in 
\cite[VI$_B$.6.2.4]{DG}.
Since $T(K)$ is non-empty,  $T$ is a  $N$--torsor.
 Theorem \ref{thm_main2} yields  that $T(k) \not = \emptyset$.
 Thus  $H$ and $H'$ are 
$G(k)$--conjugated.
\end{proof} 

Another useful statement is the following.

\begin{scorollary} Let $X$ be a $k$--variety  (resp.\ an algebraic $k$--group, etc..)
whose automorphism group is representable 
by an algebraic  $k$--group.
Let $X'$ be a  $k$--form of $X$. Then  $X$ and $X'$ are $k$-isomorphic
if and only if  $X_K$ and $X'_K$ are $K$-isomorphic.
\end{scorollary}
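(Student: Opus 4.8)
The plan is to translate the statement into descent theory and then invoke the injectivity of $b_G$ furnished by Theorem \ref{thm_main2}. Write $G=\uAut(X)$ for the automorphism group scheme, which by hypothesis is representable by an algebraic $k$--group, so that Theorem \ref{thm_main2} applies to it. First I would recall the standard dictionary of Galois (fppf) descent: the isomorphism classes of $k$--forms of $X$, i.e.\ of $k$--varieties $X'$ with $X'_{k_s}\cong X_{k_s}$, are in natural bijection with the pointed set $H^1(k,G)$, the class of $X$ itself being the neutral element. Under this bijection, $X\cong_k X'$ holds precisely when the class $[X']\in H^1(k,G)$ is trivial.

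Next I would check that this classification is compatible with the base change $k\to K$. For this one needs the identification $\uAut(X_K)\cong G_K$, which follows from the representability of $\uAut(X)$ by $G$ together with the compatibility of the automorphism functor with the flat base change $k\to K$. Granting this, the assignment $X'\mapsto X'_K$ on forms corresponds to the map $b_G:H^1(k,G)\to H^1(K,G)$; that is, the class of $X'_K$ as a $K$--form of $X_K$ is exactly $b_G([X'])$.

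With these identifications in place the corollary is immediate. The forward implication is the trivial observation that a $k$--isomorphism $X\cong_k X'$ base-changes to a $K$--isomorphism $X_K\cong_K X'_K$. Conversely, if $X_K\cong_K X'_K$, then $b_G([X'])=1$ in $H^1(K,G)$; since Theorem \ref{thm_main2} asserts that $b_G$ has trivial kernel, we deduce $[X']=1$, i.e.\ $X\cong_k X'$, as desired.

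The main obstacle will be the verification in the second step, namely the base-change compatibility $\uAut(X_K)\cong G_K$ and the resulting commutativity of the square relating forms to cohomology; this is precisely where the representability hypothesis on the automorphism group enters essentially, and once it is granted the remainder is formal. One should also be mindful of which topology is used to classify forms, taking fppf cohomology when $G$ is not smooth so that all forms are accounted for, consistently with the meaning of $H^1(k,G)$ in Theorem \ref{thm_main2}.
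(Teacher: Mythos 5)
Your proposal is correct and is essentially the paper's proof in cohomological clothing: the paper simply applies its torsor statement (Theorem \ref{thm_main2}) to the $\Aut(X)$--torsor $\mathrm{Isom}(X,X')$, whose class in $H^1(k,G)$ is exactly the class $[X']$ you manipulate, and a $K$-point of that torsor is the same datum as the vanishing of $b_G([X'])$. Your closing remark about using fppf cohomology when $G$ is not smooth is the right thing to keep in mind, and it is consistent with the paper's conventions.
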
 

\begin{proof} We apply Theorem \ref{thm_main}
to the $\Aut(X)$-torsor $\mathrm{Isom}(X,X')$.
\end{proof}

\smallskip

\subsection{Examples of $k$--groups such that  $\ker(a_G) \not =1$} \label{subsec_examples}

We assume that  $k$ is imperfect of  characteristic $p>0$ and 
we pick an element $a \in k\setminus k^p$.
 
 \smallskip
 
\noindent (a) Let $\alpha_p= \Spec(k[t]/t^p)$, this is a finite 
closed $k$-subgroup of $\GG_a$.
We denote by   $G_0= \mathrm{Aut}(\alpha_p)$ the  $k$--group 
(affine, algebraic) of  automorphisms
of the finite $k$--scheme $\alpha_p$. 
The pointed set $H^1(k,G_0)$ classifies the  $k$--forms 
of the $k$--algebra $A_0=k[x]/x^p$  \cite[III.5.1.10]{DG}.
The  $k$--algebra $A= k[x]/(x^p-a)$ is a 
$k$--form of $A_0$ and we denote by $G$ its automorphism group;
$G$ is a  $k$--form of $G_0$.

We consider now the   $O$--algebra $B= O[x]/(x^p - a t^p)$, 
which is an  $O$--form of $A \times_k O$.
This gives rise to a class $[B] \in H^1(O,G)$.
Since $A \otimes_k K \simlgr B \otimes_O K$, $[B]$ has trivial image 
in  $H^1(K,G)$.
On the other hand,  $a$ is not a $p$--power in  $B$
so that $A \otimes_k O$ and $B$ are not  $O$-isomorphic.
Thus  $[B] \not =1 \in H^1(O,G)$ and $\ker(a_G) \not = 1$.

\smallskip

\noindent (b) We shall construct an example of dimension  $1$ 
which arises from \cite[\S 7.1]{GGMB}.
We consider the action of the  $k$-group 
$G:= \GG_a \rtimes \GG_m$  (semi-direct product for the standard action of
$\GG_m$ on  $\GG_a$)
on the affine line $\mathbf{A}^1_k$ defined by 
$$
(x,y). z= x^p + y^p z \qquad (x \in \GG_a, \; y \in \GG_m\,, \;z \in \mathbf{A}^1).
$$
We observe that  $\mathbf{A}^1_k$ is a homogeneous $K$-space (on the left) 
under $G$; also  the  stabilizer of $0$ is the closed 
$k$-subgroup  $\alpha_p \rtimes_k \GG_m$.   
For an element $t_0 \in O$, we denote by $\gG_{t_0}$ 
the stabilizer of $t_0$, i.e.
$$
\gG_{t_0} = \Bigl\{ (x,y) \in G \times_k O \, \mid \, x^p + y^p t_0 = t_0 \Bigr\}.
$$
We are given $t_1 \in O$, the strict transporter from  $t_0$ to  $t_1$
is the  $(\gG_{t_0}, \gG_{t_1})$-bitorsor 
$$
\gE_{t_0, t_1} = \Bigl\{ \, (x,y) \in G \times_k O \, \mid \, x^p + y^p t_0=t_1 \Bigr\}.
$$
We consider the special cases  $t_0=a $ and $t_1= a t^p$
which is taylor made for having $\gE_{t_0, t_1}(K) \not = \emptyset$.
Since $a \not \in k^p$, one has
$\gE_{t_0, t_1}(k)=\emptyset$ and  a fortiori
$\gE_{t_0, t_1}(O)=\emptyset$. 

The point is that the $O$-scheme $\gG_a$ arises from 
the $k$--scheme $H= \bigl\{ \, (x,y) \in G\, \mid \, x^p + y^p a=a \bigr\}$.
We conclude that the map  $a_{H}: H^1(O,H) \to H^1(K,H)$ 
has a non trivial kernel. Furthermore we have  $\ker(a_H) \not \subset \ker(c_{H})$.

\begin{sremark}{\rm Theorem \ref{thm_star} shows that the 
$k$--group $G$ of Example (a) (resp.\ $H$ of Example (b)) does not 
satisfy property $(*)$. This fact can be checked directly.

\smallskip

\noindent (a) The $k$-group $G_0$ contains $\GG_m$ so that 
$G_{\overline k}$ contains $\GG_{m, \ol k}$.  On the other hand, we 
have $G(k_s)= \Aut(  k_s[x]/(x^p-a)=1$ so that $G^\dagger=1$.

\smallskip

\noindent (b) We have $H(k_s)=1$ so that $H^\dagger=1$.
On the other hand, we have $H_{\overline k} 
\cong (\alpha_p \rtimes \GG_m)_{\overline k}$ which contains 
$\GG_{m,\overline k}$.

 }
\end{sremark}

\section{A more advanced viewpoint}
 
In this section, our goal is to give an abstract exposition of the group structure
on automorphisms of $A[[t]]$  used in the second section.

 \subsection{Automorphisms of Laurent series and pro-group schemes}.
 
For each $w \geq 0$, we consider the affine 
 $\ZZ$--group scheme $J_w$ of automorphisms of 
 the sequence of rings 
 $$
 \ZZ[t]/t^{w+1} \to  \ZZ[t]/t^{w} \to \dots \to \ZZ
 $$
 For each ring $Z$, $J_w(R)$ consists of elements 
 $f_i \in  \Aut_{R-ring}( R[t]/t^{i})$ 
 for $i=0,\dots, w+1$ such that $f_{i}=f_{i+1} \mod t^i$ for $i=0,\dots,w$.
 We have transition maps $\pi^{w, w-1}: J_w \to J_{w-1}$ for each
 $w \geq 1$. The projective limit in the sense of \cite[\S 8]{EGA4}
 is denoted by  $J= \limproj J_w$. This is an affine $\ZZ$--group scheme
 whose coordinate ring is $\ZZ[J]= \limind \ZZ[J_w]$.
 We have projections $j_w: J \to J_w$ and we put $J^w= \ker( j_w)$.
 Since $J_0$ is trivial, we have $J^{0}=J$. The following 
 statement is straightforward.

 \smallskip
 
 \begin{slemma} Let $w \geq 1$
 and let $A$ be a ring. 
 \begin{numlist}
\item An  element of $J_w(A)$ is  given by 
$t \mapsto a_1 t + a_2 t^2 + \dots + a_w t^w$
with $a_1 \in A^\times$ and $a_2, \dots, a_n \in A$.

\item We have an exact sequence $1 \to \GG_a \to J_{w+1} \to J_w \to 1$.

\end{numlist}

\end{slemma}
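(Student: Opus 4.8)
The plan is to first nail down $J_w(A)$ in explicit coordinates, from which the exact sequence in (2) will fall out mechanically; the whole argument is elementary, so I will only flag the one spot that needs a genuine (short) computation.

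For (1): a compatible family $(f_i)$ is determined by its top member, the automorphism $f := f_{w+1}$ of $A[t]/t^{w+1}$, since the relation $f_i \equiv f_{i+1} \pmod{t^i}$ says precisely that $f_{i+1}$ descends to $f_i$ on the quotient $A[t]/t^i$. The first step is to observe that the existence of the family forces $f$ to descend all the way to the bottom ring $A[t]/t = A$; this descent requires $f$ to preserve the ideal $(t)$, i.e. $f(t) \in (t)$, which kills the constant term. Writing then $f(t) = a_1 t + a_2 t^2 + \dots + a_w t^w$, the second step is to note that, conversely, once $f(t) \in (t)$ the automorphism $f$ preserves every $(t^i)$ and so recovers the whole family by reduction. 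The third step characterizes invertibility: the map induced by $f$ on $(t)/(t^2) \cong A$ is multiplication by $a_1$, so $f \in \Aut_{A\text{-ring}}(A[t]/t^{w+1})$ forces $a_1 \in A^\times$; conversely, if $a_1 \in A^\times$ then $a_1 + a_2 t + \dots$ is a unit, $f(t)$ is a unit multiple of $t$, and $f$ is an automorphism. This yields the bijection of (1) and identifies $J_w$, as a scheme, with $\Spec \ZZ[a_1, a_1^{-1}, a_2, \dots, a_w] \cong \GG_m \times \GG_a^{w-1}$.

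For (2): I would read the transition morphism $\pi^{w+1,w} : J_{w+1} \to J_w$ in these coordinates as reduction modulo $t^{w+1}$, i.e. the map that forgets the top coefficient $a_{w+1}$ of $t \mapsto a_1 t + \dots + a_{w+1} t^{w+1}$. Surjectivity is then immediate: setting $a_{w+1} = 0$ defines a section on $A$-points for every ring $A$, so $\pi^{w+1,w}$ is a smooth, faithfully flat epimorphism of fppf sheaves. The kernel consists of the $f$ with $f \equiv \id \pmod{t^{w+1}}$, that is $f(t) = t + a\, t^{w+1}$ with $a \in A$ arbitrary, so the kernel is a closed subscheme isomorphic to $\GG_a = \Spec \ZZ[a]$.

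The only point that is not pure bookkeeping is checking that this kernel is $\GG_a$ as a group scheme, i.e. that its group law is additive rather than twisted; this is where I would do the one real computation. Composing $f(t) = t + a\, t^{w+1}$ with $g(t) = t + b\, t^{w+1}$ inside $A[t]/t^{w+2}$ gives $f(g(t)) = g(t) + a\, g(t)^{w+1}$, and since $g(t)^{w+1} \equiv t^{w+1} \pmod{t^{w+2}}$ (every cross term has degree at least $2w+1 > w+1$ for $w \ge 1$), this equals $t + (a+b)\, t^{w+1}$. Hence the group law is $a, b \mapsto a+b$, the kernel is $\GG_a$, and together with the surjectivity above this produces the exact sequence $1 \to \GG_a \to J_{w+1} \to J_w \to 1$. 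I expect this additivity check to be the only real obstacle, and even it is routine.
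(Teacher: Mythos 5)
Your proof is correct, and it is the evident intended argument: the paper itself offers no proof of this lemma, merely declaring it ``straightforward'' before the statement. Your two substantive observations --- that compatibility with the tower (not mere automorphy of the top ring) is what kills the constant term, and that the kernel's group law is additive because all cross terms in $(t+bt^{w+1})^{w+1}$ have degree at least $2w+1 \geq w+2$ --- are exactly the points worth recording.
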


In particular, 
 we have $J_1= \GG_m$ and the map $j^1 : J \to J_1=\GG_m$ is split
 by mapping a scalar $\lambda \in R^\times$ to $f_i= \times \lambda$.
 It follows that $J=J^1 \rtimes \GG_m$ where
 $J^1$ is a pro-unipotent $\ZZ$--group scheme.

 \smallskip
 
For a ring $R$, we have $R[[t]]= \limproj R[t]/ t^{w+1}$
so that $J(R)=  \limproj J_w(R)$ acts on $R[[t]]$.
We name  $J(R)$ the group of continuous automorphisms of $R[[t]]$.
An important thing is that $J(R)$ acts also on $A[[t]]$ for each
$R$--algebra $A$.

\subsection{Torsors and cocycles for Hochschild cohomology}
 
 We are given a ring $A$, an $A$--group scheme $G$ 
 equipped with a closed immersion $G \hookrightarrow \SL_N$,
 a  $G$--torsor $X$ and  a point 
 $x \in X\bigl(A(\!(t)\!) \bigr)$. 
 Of course an important case is $G$ itself.
 For each $A$--ring $B$ and each $\sigma \in J(B)$ we write
 $\sigma(x_B)=x_B. z_\sigma(x)$ with $z_\sigma(x) \in G\bigl(B(\!(t)\!) \bigr)$.
 
 For each $w \geq 0$ we denote by 
 $G_w=\prod_{A[t]/t^{w+1} \, / \, A} G_{A[t]/t^{w+1}}$ 
 the Weil restriction of $G$ with respect to the finite 
 $A$--algebra $A[t]/t^{w(x)+1}$.
 
 We denote by $L^+G$ the $A$--functor in groups $B \mapsto G\bigl( B[[t]] \bigr)$; 
there is a  natural map $L^+G \to G_w$ for each $w \geq 0$.
Similarly we denote by $LG$ the $A$--functor in groups
$B \mapsto G\bigl( B(\!(t)\!) \bigr)$.

 \begin{slemma}\label{lem_ww} There exists a smallest integer $w(x)$ such that 
 the restriction of $z(x)$ to $J^{w(x)}$ factorizes through $L^+G$.
 \end{slemma}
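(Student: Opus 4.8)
The plan is to reduce the statement to the case of a trivial torsor over $\SL_N$ and then to re-run the $t$-adic estimate underlying Lemma \ref{lem_record}, applied now to an arbitrary continuous automorphism in place of the one-parameter maps $\sigma_r$. The first move is a reformulation: since $L^+G \to LG$ is a monomorphism of $A$-functors, the restriction $z(x)|_{J^w}$ factorizes through $L^+G$ if and only if $z_\sigma(x) \in G(B[[t]])$ for every $A$-ring $B$ and every $\sigma \in J^w(B)$. Because $J^{w+1} \subseteq J^w$, the set $S \subseteq \ZZ_{\geq 0}$ of integers $w$ enjoying this property is upward closed; hence it suffices to prove $S \neq \emptyset$, and then $w(x) := \min S$, which exists by well-ordering of $\ZZ_{\geq 0}$, is the asserted smallest integer.

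To prove $S \neq \emptyset$ I would pass to a trivial torsor. Choose a faithfully flat cover $A \to A'$ trivializing $X$ and set $B' = B \otimes_A A'$; then $B \to B'$ is injective, so a section of $G$ over $B(\!(t)\!)$ lies in $G(B[[t]])$ as soon as its image over $B'(\!(t)\!)$ lies in $G(B'[[t]])$ (after the embedding $\rho$ this is detected on the $t$-coefficients, which inject into $B'$). Over $A'$ the point $x$ corresponds to some $g \in G(A'(\!(t)\!))$ with $z_\sigma(x) = g^{-1}\sigma(g)$, and composing with $\rho$ we may assume $G = \SL_N$ and write $g = t^{-d}\ug$ with $\ug \in \Mat_N(A'[[t]])$ and $\det \ug = t^{Nd}$, exactly as in Proposition \ref{prop_index}.

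For $\sigma \in J^w(B')$ with $w \geq 1$ one has $\sigma(t) = t(1+\epsilon)$ with $\epsilon = \sigma(t)/t - 1 \in t^w B'[[t]]$, and the formal identity \eqref{eq_coeff_epsilon}, whose coefficients $c^{a,b}_{i,j} \in A'$ depend only on $\ug$, yields
$$g^{-1}\sigma(g) = (1+\epsilon)^{-d}\, \ug^{-1}\sigma(\ug), \qquad \bigl(\ug^{-1}\sigma(\ug)\bigr)_{i,j} = \delta_{i,j} + t^{-Nd}\sum_{a \geq 0,\; b \geq 1} c^{a,b}_{i,j}\, t^a \epsilon^b.$$
Since $\epsilon^b \in t^{wb}B'[[t]]$, each summand lies in $t^{-Nd+a+wb}B'[[t]]$ with $-Nd+a+wb \geq w - Nd$; thus as soon as $w \geq Nd$ every exponent is non-negative, so $\ug^{-1}\sigma(\ug) \in \Mat_N(B'[[t]])$, while $(1+\epsilon)^{-d} \in B'[[t]]$ because $\epsilon \in tB'[[t]]$. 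Hence $g^{-1}\sigma(g) \in \SL_N(B'[[t]])$ whenever $w \geq Nd$, so $\{w : w \geq Nd\} \subseteq S$ and in fact $w(x) \leq Nd$.

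The one point demanding care is recognizing that this estimate, originally carried out for the family $\sigma_r$, is really a statement about the single formal substitution $\epsilon \mapsto \sigma(t)/t - 1$ valid for every $\sigma \in J^w$: the combinatorial coefficients $c^{a,b}_{i,j}$ are intrinsic to $\ug$, so the uniform bound $w(x) \leq Nd$ falls out at once. The remaining ingredients, namely the pointwise reformulation of ``factorizes through $L^+G$'' and the faithfully flat descent of $[[t]]$-integrality, are routine.
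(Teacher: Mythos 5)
Your proof is correct and follows essentially the same route as the paper: reduce by faithfully flat descent (using $B[[t]]=B'[[t]]\cap B(\!(t)\!)$) to a trivial torsor under $\SL_N$, write $g=t^{-d}\ug$ with $\det(\ug)=t^{Nd}$, and bound the $t$-adic order of the entries of $\ug^{-1}\sigma(\ug)$ via the adjugate formula. The only difference is that you make the paper's final ``it follows that there exists a uniform $w$'' explicit by substituting $\epsilon=\sigma(t)/t-1\in t^wB'[[t]]$ into the identity \eqref{eq_coeff_epsilon}, which yields the clean quantitative bound $w(x)\leq Nd$.
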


 \begin{proof} We have to prove that there exists $w \in \NN$ such that 
$z_\sigma(x) \in G\bigl(B[[t]] \bigr)$ for all $R$-algebras $B$
and all $\sigma \in J(B)$.
 Without loss of generality, we can replace $A$ by a faithfully flat extension so 
 we can assume that $X$ is a trivial $G$--torsor and $x=g \in G\bigl(A(\!(t)\!) \bigr)$.
We can replace then $G$ by $\SL_N$ and use the setting of the proof 
of Proposition \ref{prop_index}.
We write 
$g=t^{-d} \ug$ with $d \geq 0$ and $\ug \in \Mat_N(A[[t]])
\setminus t \Mat_N(A[[t]])$. It follows that $\det(\ug)=t^{Nd}$.
For an $A$--algebra $B$ and $\sigma \in J(B)$, 
we have
\begin{equation}\label{formula00}
g_{B[[t]]}^{-1} \sigma(g_{B[[t]]})= \bigl(\frac{\sigma(t)}{t} \bigr)^d \,\,  
\ug_{B[[t]]}^{-1} \,  \sigma(\ug_{B[[t]]}).
\end{equation}

\noindent We write  $\ug= \bigl(P_{i,j}\bigr)_{i,j=1,..,N}$ with $P_{i,j} \in A[[t]]$ 
and denote by $\Delta_{i,j} \in A[[t]]$  the minor of index $(i,j)$ of $\ug$.
We have $\ug^{-1}= \Bigl( t^{-Nd} \, \Delta_{i,j} \Bigr)_{i,j=1,..,N}$ so that the  
$(i,j)$--coefficient $D_{i,j, \sigma}$ of $g_{B[[t]]}^{-1} \,  \sigma(g_{B[[t]]})$.
is
\begin{equation}\label{formula01}
D_{i,j,\sigma}= \bigl(\frac{\sigma(t)}{t} \bigr)^d \, t^{-Nd} \, \sum\limits_{k=1}^N
\Delta_{i,k}(t) \,  P_{k,j}(\sigma(t)) \in B(\!(t)\!). 
\end{equation}
When $\sigma=1$, $C_{i,j,\sigma}$ specializes on  $\delta_{i,j}$ so that 
\begin{equation}\label{formula02}
D_{i,j,\sigma} =\delta_{i,j} \, + \, \bigl(\frac{\sigma(t)}{t} \bigr)^d \, t^{-Nd} \, \sum\limits_{k=1}^N \Delta_{i,k}(t) \,  
\bigl( P_{k,j}(\sigma(t)) - P_{k,j}(t) \bigr).
\end{equation}
It follows that there exists a uniform integer $w \geq 0$ such that
$g_{B[[t]]}^{-1} \sigma(g_{B[[t]]})$ belongs to $G(B[[t]])$
for $\sigma \in J^w(B)$.
\end{proof}

\medskip

Our construction defines then a $1$--cocycle $z(x) : J^{w(x)} \to L^+G$
for the Hochschild cohomology as defined by Demarche \cite[\S 2.1]{D} for the $A$--functor  
in groups  $J^{w(x)}$ and $LG$.
This induces an $1$-cocycle for the Hochschild cohomology  
$$
\Res(x): J^{w(x)} \to  G_{w(x)}.
$$
Since $J^{w(x)}$ acts trivially on $G_{w(x)}$, 
the map $\Res(x)$ is actually a homomorphism of $A$--groups.
This defines  classes
$\gamma(x)=[z(x)] \in H^1_{coc}(J^{w(x)}, LG)$ and
$[\Res(x)] \in H^1_{coc}(J^{w(x)}, G_{w(x)})$.

\begin{slemma} \label{lem_sorites2}
 Let $A'$ be a flat cover of $A$. Then $w(x_{A'(\!(t)\!)})=w(x)$
 and $\Res(x_{A'(\!(t)\!)})= \Res(x)$.
\end{slemma}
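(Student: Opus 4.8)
The plan is to observe that everything — the integer $w(x)$ and the homomorphism $\Res(x)$ — is extracted from the cocycle $z(x)$, whose defining formula does not see the base ring, and then to exploit that a faithfully flat map is injective. First I would record the compatibility of $z(x)$ with the base change $A \to A'$. For an $A'$-ring $B$, which is in particular an $A$-ring, and for $\sigma \in J(B)$, the element $z_\sigma(x_{A'(\!(t)\!)}) \in G(B(\!(t)\!))$ is by definition the unique solution of $\sigma(x_B) = x_B\, z_\sigma(x)$; this is verbatim the equation defining $z_\sigma(x)$, so $z_\sigma(x_{A'(\!(t)\!)}) = z_\sigma(x)$. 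I would also use that $z(x)$ is functorial in the test ring: for a map $B \to C$ of $A$-rings and $\sigma \in J(B)$ with image $\sigma_C$, base-changing the defining equation and invoking the torsor structure gives $z_{\sigma_C}(x) = z_\sigma(x)_C$.

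With this in hand the inequality $w(x_{A'(\!(t)\!)}) \le w(x)$ is immediate: if $w = w(x)$ then $z_\sigma(x) \in G(B[[t]])$ for every $A$-ring $B$ and every $\sigma \in J^w(B)$, and restricting the test rings to $A'$-rings while using $z_\sigma(x_{A'(\!(t)\!)}) = z_\sigma(x)$ yields $w(x_{A'(\!(t)\!)}) \le w$. For the reverse inequality I would set $w = w(x_{A'(\!(t)\!)})$ and take an arbitrary $A$-ring $B$ with $\sigma \in J^w(B)$. Since $A \to A'$ is a faithfully flat cover, the base change $B \to B' := B \otimes_A A'$ is again faithfully flat, in particular injective. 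Writing $\sigma'$ for the image of $\sigma$ in $J^w(B')$, functoriality together with the identification of the two cocycles gives $z_{\sigma'}(x_{A'(\!(t)\!)}) = z_\sigma(x)_{B'}$, and this lies in $G(B'[[t]])$ because $w(x_{A'(\!(t)\!)}) \le w$. Viewing $z_\sigma(x)$ as a matrix in $\SL_N(B(\!(t)\!))$ through the closed immersion $G \hookrightarrow \SL_N$, the membership just obtained says that every entry $\sum_i c_i t^i$ of this matrix has all its coefficients $c_i$ with $i < 0$ mapping to $0$ in $B'$; injectivity of $B \to B'$ forces $c_i = 0$ in $B$ for $i < 0$, so the entries lie in $B[[t]]$, and as $B[[t]] \hookrightarrow B(\!(t)\!)$ the defining equations of $G$ persist, giving $z_\sigma(x) \in G(B[[t]])$. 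As $(B,\sigma)$ was arbitrary this shows $w(x) \le w$, hence $w(x) = w(x_{A'(\!(t)\!)})$.

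Finally, writing $w$ for this common value, Lemma \ref{lem_ww} says that $z(x)|_{J^w}$ factors through $L^+G$, and $\Res(x)$ is its composite with $L^+G \to G_w$, i.e. $\Res(x)_\sigma = z_\sigma(x) \bmod t^{w+1} \in G_w(B)$ on a $B$-point $\sigma$. For an $A'$-ring $B$ and $\sigma \in J^w(B)$, reducing $z_\sigma(x_{A'(\!(t)\!)}) = z_\sigma(x)$ modulo $t^{w+1}$ gives $\Res(x_{A'(\!(t)\!)})_\sigma = \Res(x)_\sigma$; since the Weil restriction $G_w$ commutes with the base change $A \to A'$, both sides are morphisms $J^w_{A'} \to G_{w,A'}$ agreeing on all $A'$-rings, so $\Res(x_{A'(\!(t)\!)}) = \Res(x)_{A'}$. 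The one step that genuinely needs care is the matrix argument in the second paragraph: one must be sure that factoring through $L^+G$ is exactly the vanishing of the strictly polar coefficients of $z_\sigma(x)$ entry by entry, so that it is a ring-theoretic condition reflected by the injective base change $B \hookrightarrow B \otimes_A A'$; the rest is bookkeeping with the functoriality of the cocycle.
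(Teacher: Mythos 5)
Your proof is correct and follows essentially the same route as the paper's: the inequality $w(x_{A'(\!(t)\!)})\le w(x)$ is formal, and the reverse inequality rests on the injectivity of $B\to B\otimes_A A'$ together with the identity $B[[t]]=B'[[t]]\cap B(\!(t)\!)$ and the affineness of $G$ (the paper phrases this contrapositively, via a witness $\sigma\in J^{w-1}(B)$ that remains a witness after base change, while you argue the membership directly, which is the same mechanism). Your explicit treatment of the $\Res$ statement, including the reading of the equality as $\Res(x_{A'(\!(t)\!)})=\Res(x)_{A'}$, fills in a step the paper leaves implicit and is correct.
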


\begin{proof} We have the obvious relation $w':=w(x_{A'(\!(t)\!)}) \leq w(x)=:w$
(which holds in general). We
observe that $w=0$ implies that $w'=0$ as well.
We can assume $w \geq 1$.
By definition of  $w$, there 
exists a ring extension $B$ of $A$ and $\sigma \in J^{w-1}(B)$
such that $z_\sigma(x) \in G\bigl(B(\!(t)\!) \bigr)  \setminus G\bigl(B[[t]] \bigr)$.
We consider $B'=A' \otimes_A B$, this is a flat cover of $B$
so that $B$ injects in $B'$.   It follows that $B[[t]]= B'[[t]] \cap B(\!(t)\!)$.
Since $G$ is affine, we get that 
$(z_\sigma(x))_{B'}$ belongs to  $G\bigl(B'(\!(t)\!) \bigr)  \setminus G\bigl(B'[[t]] \bigr)$.
 Thus $w'>w-1$ and $w'=w$.
\end{proof}

\subsection{Compararison with the elementary construction}

For each integer $r \geq 1$,  
we consider the map of $\ZZ$--functors\footnote{It is not a group functor.}
$\phi^r: \GG_a \to J$,
which associates to the coordinate $u$ the element of $J(\ZZ[u])$ defined
by $t \mapsto t(1+ut^r)=t+ u \, t^{r+1}$. We observe that $\phi$ factorizes through
$J^{r+1}$.

\begin{slemma}\label{lem_phi}
The composite $\GG_a \xrightarrow{\phi^r} J^{r} \to J^{r}/J^{r+1}$
is a $\ZZ$--group isomorphism.
\end{slemma}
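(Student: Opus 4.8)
The plan is to identify the target $J^r/J^{r+1}$ with $\GG_a$ and then to verify that, under this identification, the composite is simply the identity map; the homomorphism property will come out for free. First I would check that $\phi^r$ really does factor through $J^r$, as the statement presupposes: the series $\phi^r(u)\colon t \mapsto t + u\,t^{r+1}$ has trivial truncation modulo $t^{r+1}$, so $j_r \circ \phi^r$ is trivial and $\phi^r$ lands in $J^r = \ker(j_r)$.

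Next I would manufacture the isomorphism $J^r/J^{r+1} \cong \GG_a$. Writing $j_r = \pi^{r+1,r} \circ j_{r+1}$, the subgroup $J^r = \ker(j_r)$ is exactly the preimage under $j_{r+1}$ of $K := \ker(\pi^{r+1,r}\colon J_{r+1} \to J_r)$, whereas $J^{r+1} = \ker(j_{r+1})$. Because $j_{r+1}\colon J \to J_{r+1}$ is surjective on $A$-points for every ring $A$ (lift a truncated automorphism by padding it with zero higher coefficients), the restriction $j_{r+1}\colon J^r \to K$ is surjective with kernel $J^r \cap J^{r+1} = J^{r+1}$, so it induces an isomorphism $\bar{\jmath}\colon J^r/J^{r+1} \xrightarrow{\sim} K$. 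The exact sequence $1 \to \GG_a \to J_{r+1} \to J_r \to 1$ of the preceding lemma then supplies an isomorphism $\theta\colon K \xrightarrow{\sim} \GG_a$, namely the coefficient of $t^{r+1}$.

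Finally I would compute the composite. Let $q\colon J^r \to J^r/J^{r+1}$ be the quotient map. For $u \in \GG_a(A)$, the element $j_{r+1}(\phi^r(u))$ is the class of $t \mapsto t + u\,t^{r+1}$ in $K(A)$, whose $\theta$-value is $u$; hence $\theta \circ \bar{\jmath} \circ q \circ \phi^r = \id_{\GG_a}$. Since $\theta$ and $\bar{\jmath}$ are group isomorphisms, $q \circ \phi^r$ must be a $\ZZ$-group isomorphism, which is precisely the claim.

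The one genuine subtlety, and the reason the argument is arranged this way, is that $\phi^r$ is not itself a group homomorphism: composing $\phi^r(u_1)$ with $\phi^r(u_2)$ produces correction terms in degrees $\geq r+2$ that prevent the product from equalling $\phi^r(u_1 + u_2)$ on the nose. The point is that all of those corrections vanish modulo $J^{r+1}$, so passing to the quotient forces the composite to agree with the canonical additive identification of $K$ with $\GG_a$. Thus the homomorphism property is extracted a posteriori from the explicit computation rather than being assumed at the outset.
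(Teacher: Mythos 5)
Your proof is correct, and it is organized differently from the paper's. The paper's proof is a direct verification of additivity: it composes $\phi^r(u_1)\circ\phi^r(u_2)$ and observes that the result is $t+(u_1+u_2)t^{r+1}$ plus terms of degree $\geq r+2$, so that the composite into $J^r/J^{r+1}$ is a homomorphism (bijectivity being left implicit). You instead identify $J^r/J^{r+1}$ with $K=\ker(J_{r+1}\to J_r)\cong\GG_a$ via the surjection $j_{r+1}$ and the exact sequence $1\to\GG_a\to J_{r+1}\to J_r\to 1$ from the preceding lemma, and then check that the resulting composite $\GG_a\to\GG_a$ is the identity on points, from which both the homomorphism property and bijectivity follow formally. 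The computation the paper performs has not disappeared in your version --- it is exactly what makes $K\cong\GG_a$ a \emph{group} isomorphism, i.e.\ it is hidden in the exact sequence of the preceding lemma, which the paper declares ``straightforward'' --- but relocating it there buys a cleaner argument here: no composition of two generic elements of $\phi^r(\GG_a)$ is needed, and the isomorphism claim (not just the homomorphism claim) is actually proved. Your closing remark correctly identifies the one subtlety, namely that $\phi^r$ itself is not a homomorphism and only becomes one after passing to the quotient; this is precisely the content of the paper's displayed computation. One small point in your favor: you verify that $\phi^r$ lands in $J^r$, which fixes an off-by-one slip in the paper's text preceding the lemma (which asserts factorization through $J^{r+1}$, incompatible with the lemma's own statement).
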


\begin{proof} We take two parameters $u_1,u_2$ and see that 
$\phi^{r}(u_1) \circ \phi^r(u_2) \in J(\ZZ[u_1,u_2])$ maps $t$ to
$$
 (t+ u_2 t^{r+1})+ u_1 (t+ u_2 t^{r+1})^{r+1} =
 t + (u_1 + u_2) t^r + (r+1) u_1 \, u_2 t^{r+2} + \dots
$$
\end{proof}

The following statement is then obvious.

\begin{slemma}\label{lem_obvious} For each integer 
$w \geq 1$ and each ring $B$, we have
$$
J^{w}(B)= \bigl\langle  \phi^r\bigl( B \bigr) \bigr\rangle_{r \geq w}.
$$
\end{slemma}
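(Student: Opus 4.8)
The plan is to prove the two inclusions separately, the easy one directly and the substantial one by peeling off congruence layers using Lemma \ref{lem_phi}. The inclusion $\langle \phi^r(B)\rangle_{r\geq w}\subseteq J^{w}(B)$ is immediate: the congruence subgroups are nested, $J^{w}\supseteq J^{w+1}\supseteq\cdots$, so for every $r\geq w$ one has $\phi^r(B)\subseteq J^{r}(B)\subseteq J^{w}(B)$, and hence the subgroup they generate is contained in $J^{w}(B)$.

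For the reverse inclusion I would argue by successive approximation, removing one graded piece at a time. Fix $\sigma\in J^{w}(B)$. By Lemma \ref{lem_phi} the composite $\GG_a\xrightarrow{\phi^{w}}J^{w}\to J^{w}/J^{w+1}$ is an isomorphism, so on $B$-points the layer map $J^{w}(B)\to (J^{w}/J^{w+1})(B)=B$ is split surjective; let $u_w\in B$ be the image of $\sigma$, so that $\sigma_1:=\phi^{w}(u_w)^{-1}\sigma$ lies in $J^{w+1}(B)$. Iterating, given $\sigma_n\in J^{w+n}(B)$ I choose $u_{w+n}\in B$ mapping to the class of $\sigma_n$ in $J^{w+n}/J^{w+n+1}\cong\GG_a$ and set $\sigma_{n+1}:=\phi^{w+n}(u_{w+n})^{-1}\sigma_n\in J^{w+n+1}(B)$. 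Concretely, writing $\sigma\colon t\mapsto t+\sum_{i>w}a_i t^{i}$, the scalar $u_{w+n}$ is simply the lowest surviving coefficient after the previous corrections, so each step kills exactly one more term.

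It then remains to pass to the limit. The partial products $p_n:=\phi^{w}(u_w)\,\phi^{w+1}(u_{w+1})\cdots\phi^{w+n-1}(u_{w+n-1})$ satisfy $p_n^{-1}\sigma=\sigma_n\in J^{w+n}(B)$ by construction, i.e.\ $p_n$ and $\sigma$ have the same image in $J_{w+n}(B)$. Since $J(B)=\limproj_v J_v(B)$ and the $J^{v}(B)$ form a fundamental system of neighbourhoods of the identity for the inverse-limit topology, the sequence $p_n$ converges to $\sigma$, exhibiting $\sigma$ as the convergent product $\prod_{r\geq w}\phi^{r}(u_r)$. Thus $\sigma$ lies in the pro-subgroup generated by the $\phi^r(B)$, $r\geq w$, which gives the claimed equality.

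The one point deserving care --- and the only genuine obstacle --- is the meaning of $\langle-\rangle$ together with the attendant convergence: finite products of the $\phi^r$ yield only polynomial (truncated) automorphisms, so the equality must be read in the pro-group sense, with the generated subgroup taken closed for the inverse-limit topology. Granting that reading, the argument is entirely formal once Lemma \ref{lem_phi} is in hand, since the splitting it provides is precisely what makes each layer map $J^{w+n}(B)\twoheadrightarrow B$ surjective and the inductive peeling unobstructed.
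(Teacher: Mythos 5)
Your proof is correct and is precisely the successive-approximation argument the paper leaves implicit (it offers no proof, declaring the statement ``obvious'' after Lemma \ref{lem_phi}): split off the bottom graded piece via $\phi^{w+n}$, iterate, and pass to the limit in $J(B)=\limproj J_v(B)$. The caveat you flag about reading $\langle \cdot \rangle$ as \emph{topological} generation is not optional but forced --- for $B=\QQ$ the abstract subgroup generated by the countable set $\bigcup_{r\geq w}\phi^r(\QQ)$ is countable while $J^{w}(\QQ)\cong \QQ^{\NN}$ is not --- and it is the reading consistent with how the lemma is used in Lemma \ref{lem_comp}, where the relevant subgroups (kernels and preimages of $L^+G$) are closed, so only finitely many peeling steps are ever needed there.
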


We consider firstly the case when the index of $x$ is integral.

\begin{slemma} \label{lem_comp} We assume that $r(x) \in \NN$.

\begin{numlist}
\item If $r(x) \geq 1$, then $w(x) = r(x)+1$ and 
the following diagrams
\[
\xymatrix{
 J^{w(x)} \ar[r]^{\Res(x)}  & G_{w(x)} \ar[d]    \quad & 
  J^{w(x)} \ar[d] \ar[r]^{\Res(x)}  & G_{w(x)} \ar[d] \\
  \GG_a \ar[u]^{\phi^{r(x)}} \qquad \ar[r]^{\qquad \res(x)} & G  \quad & 
   J^{w(x)}/  J^{w(x)+1} = \GG_a \qquad \ar[r]^{\qquad \res(x)} & G 
  }
\]
commute. Furthermore $\Res(x)$ is trivial on $J^{2 w(x)+1}$.

\item If $r(x) =0$, then $w(x)=0$ or $1$ and the following diagrams
\[
\xymatrix{
 J \ar[r]^{\Res(x)}  & G \quad & J \ar[d] \ar[r]^{\Res(x)}  & G \\
 \GG_m  \ar[u] \ar[ur]_{\res(x)} & \quad & \GG_m   \ar[ur]_{\res(x)}
}
\]
commute. 

\end{numlist}
 
\end{slemma}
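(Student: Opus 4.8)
The plan is to reduce everything to the split torsor and then to read off both the integer $w(x)$ and the homomorphism $\Res(x)$ from the elementary computations of Proposition~\ref{prop_index} and Lemma~\ref{lem_record}. First I would use the reduction already employed in Lemma~\ref{lem_ww} together with Lemma~\ref{lem_sorites2} to pass to a faithfully flat cover over which $X$ is trivial; then $x$ is represented by some $g\in G\bigl(A(\!(t)\!)\bigr)$ and, for the one-parameter families, $z_{\phi^{r}(u)}(x)=g^{-1}\,\sigma_{r}(g)=g_{r}(x)$. This identifies the restriction of the pro-group cocycle $z(x)$ to the subfunctors $\phi^{r}$ with the gauge elements $g_{r}(x)$ of Proposition~\ref{prop_torsor}, so that the whole lemma becomes a comparison between the filtration $\{J^{w}\}$ and the set $\Sigma(x)$.

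To pin down $w(x)$ I would use Lemma~\ref{lem_obvious}, which gives $J^{w}(B)=\langle\phi^{r}(B)\rangle_{r\ge w}$, together with the fact that every $\sigma\in J(B)$ preserves $G(B[[t]])$; by the cocycle relation, $z(x)|_{J^{w}}$ factors through $L^{+}G$ if and only if $z_{\phi^{r}(b)}(x)\in G(B[[t]])$ for all $r\ge w$ and all $b$. After the reduction this is exactly the condition $r\in\Sigma(x)$, and Proposition~\ref{prop_index}.(1) gives $\Sigma(x)=\QQ_{>0}\cap[r(x),+\infty[$. In the additive case $r(x)\ge 1$ the bottom admissible generator is $\phi^{r(x)}$, whence $w(x)=r(x)+1$, with $\phi^{r(x)}$ inducing the isomorphism $\GG_a\simeq J^{w(x)}/J^{w(x)+1}$ of Lemma~\ref{lem_phi}. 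In the multiplicative case $r(x)=0$ the scaling $\sigma_{0}\colon t\mapsto\lambda t$ already lands in $L^{+}G$ by Proposition~\ref{prop_index}.(3) (equivalently Lemma~\ref{lem_0}), so using $J=J^{1}\rtimes\GG_m$ one gets $w(x)\in\{0,1\}$ according to whether $J^{1}$ also maps into $L^{+}G$.

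For the two squares I would unwind the definitions: $\Res(x)$ is $z(x)|_{J^{w(x)}}$ followed by the truncation $G(B[[t]])\to G\bigl(B[t]/t^{w(x)+1}\bigr)$, and the vertical $G_{w(x)}\to G$ is the further specialisation $t\mapsto 0$. Precomposing with $\phi^{r(x)}$ and then setting $t=0$ yields $j\bigl(g_{r(x)}(x)\bigr)=\res(x)$, which is the left square. For the right square I must check that the reduction modulo $t$ of $\Res(x)(\sigma)$ depends only on the class of $\sigma$ in $J^{w(x)}/J^{w(x)+1}\cong\GG_a$ and equals $\res(x)$ of that leading coefficient $v$: writing $\sigma(t)=t\bigl(1+v\,t^{r(x)}+\text{higher}\bigr)$, this is precisely Proposition~\ref{prop_index}.(4), which asserts $j\bigl(g^{-1}\sigma(g)\bigr)=j\bigl(g^{-1}\sigma_{r(x)}(g)\bigr)=\res(x)(v)$. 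The multiplicative squares follow the same way, the reduction of $z_{\sigma_{0}}(x)$ being read off Proposition~\ref{prop_index}.(3).

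Finally, the vanishing of $\Res(x)$ on $J^{2w(x)+1}$ is a degree estimate packaged in Lemma~\ref{lem_record}.(1): for $\sigma\in J^{2w(x)+1}(B)$ the order of $\epsilon=\sigma(t)/t-1$ is high enough that, for every pair $(a,b)$ with $-Nd+a+r(x)\,b\ge 0$, the term $t^{-Nd+a}\epsilon^{b}$ of $z_{\sigma}(x)-I$ lies in $t^{\,w(x)+1}G(B[[t]])$, so that $\Res(x)(\sigma)=1$ in $G_{w(x)}$. The hard part throughout is the index bookkeeping relating the two pictures—matching the order of a continuous automorphism in $\{J^{w}\}$ with the rational parameter of $\sigma_{r}$, and verifying that higher-order corrections to $\sigma$ are invisible both to the reduction defining $\res(x)$ (Proposition~\ref{prop_index}.(4)) and to the truncation defining $\Res(x)$—supplemented by the separate input of Proposition~\ref{prop_index}.(3) in the multiplicative case.
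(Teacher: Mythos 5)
Your proposal follows essentially the same route as the paper's proof: localize for the flat topology to reduce to the trivial torsor, identify $z_{\phi^r(u)}(x)$ with $g^{-1}\sigma_r(g)$, determine $w(x)$ from Lemma \ref{lem_obvious} together with $\Sigma(x)=\QQ_{>0}\cap[r(x),+\infty[$, get the left square by unwinding definitions, get the right square from the insensitivity of the reduction mod $t$ to higher-order corrections of $\sigma$ (you invoke Proposition \ref{prop_index}.(4); the paper invokes Lemma \ref{lem_record}.(1) with $M=1$ plus Lemma \ref{lem_obvious} — the same degree estimate), and deduce the triviality on $J^{2w(x)+1}$ from Lemma \ref{lem_record}.(1) with $M=r(x)+1$. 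The multiplicative case is handled the same way in both.

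One caveat on the value of $w(x)$: with the tools you cite, namely $J^{w}(B)=\bigl\langle\phi^{r}(B)\bigr\rangle_{r\ge w}$ and $\Sigma(x)=\QQ_{>0}\cap[r(x),+\infty[$, the smallest $w$ for which $z(x)|_{J^{w}}$ factors through $L^+G$ is $w(x)=r(x)$, not $r(x)+1$; the ``whence $w(x)=r(x)+1$'' in your second paragraph does not follow from the preceding sentence. This off-by-one is already present in the paper (whose own proof concludes $w(x)=r(x)$, contradicting the displayed statement), and $w(x)=r(x)$ is also what is required for $\phi^{r(x)}$ to induce the isomorphism $\GG_a\simeq J^{w(x)}/J^{w(x)+1}$ of Lemma \ref{lem_phi} that both your diagram and the paper's rely on. Apart from inheriting this indexing slip, your argument is correct and matches the paper's.
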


\begin{proof} 
Once again we can localize for the flat topology and assume that 
the $G$--torsor is trivial.

\smallskip

\noindent  (1) Since the maps $\phi^{r}$ factorize by $L^+G$ for all $r \geq r(x)$,
it follows that $w(x) \leq r(x)$ according to the identity of Lemma
\ref{lem_obvious}.
On the other hand, $\phi^{r(x)-1}$ does not  factorize
by $L^+_G$, so that $w(x) > r(x)-1$. We conclude that 
$w(x)=r(x)$.
The commutativity of the left-hand side diagram is clear.
For the right hanside diagram we apply lemma \ref{lem_record}.(1)
to $r(g)$ and $M=1$. For $s \geq r(x)+1$, it follows that
$g^{-1} \, \sigma_s(g) \in \ker\Bigl( G\bigl(A^u[[t]]\bigr) \to 
 G\bigl(A^u[t]/t) \Bigr)$ so that 
 $$
 \phi^{s}(A) \subseteq  \ker\Bigl( J^{w(x)}(A)  \xrightarrow{\Res}
 G\bigl(A[t]/t^{w(x)+1} \bigr) \to G(A) \Bigr)
 $$
Lemma \ref{lem_obvious} yields that 
$$
 J^{w(x)+1}(A) \subseteq  \ker\Bigl( J^{w(x)}(A)  \xrightarrow{\Res}
 G\bigl(A[t]/t^{w(x)+1} \bigr) \to G(A) \Bigr).
 $$
Since we have the same property for each $A$--algebra $B$, 
 the right-hand side compatibility is established. 
The last assertion  follows from Lemma \ref{lem_record}.(1) applied to $M=r(x)+1$.
 \smallskip
 
\noindent (2) This is similar with (1) and the last assertion 
follows from Lemma \ref{lem_record}.(2).
\end{proof}

\begin{remarks}\label{rem_alternative}{\rm (a) Since $\Res$ is a group homomorphism,
Lemma \ref{lem_comp} shows that $\res$ is a group homomorphism.
We have then an alternative proof of the homomorphism part of Theorem \ref{thm_main}.(1) and (2) when the index
is integral. However we still need Proposition \ref{prop_index} for showing
that $\res(x)$ (and a fortiori $\Res(x)$)
are not trivial.

\smallskip

\noindent (b) In case (1), for $v=0,..., w(x)$,
the same argument shows more generally that   $\Res: J^{w(x)} \to G_{w(x)} \to G_{v}$
factorizes through $J^{w(x)}/ J^{w(x)+v+1}$.  
 }
\end{remarks}

\section{Loop groups and affine Grassmannians}

Our goal is to relate our construction with the theory of affine grassmannians.

\subsection{Ind-schemes}\label{subsec_ind}
We continue with the same setting
 with $G \subset \SL_N=:H$ a
closed $A$--subgroup scheme.
The loop groups are the $A$-functors
$$
B \, \mapsto  \,  L^+_G(B) = G(B[[t]]), \enskip
B \, \mapsto  \,  L_G(B) = G(B(\!(t)\!)).
$$
Both $A$--functors are equipped with the rotation action of $\GG_m$:
for each $A$--ring $B$, each $b \in B^\times$ and each
$g \in L_G(B)= G(B(\!(t)\!))$ we put $\delta(b).g=g(b^{-1} t)$.
The $A$--functor $L^+_G$ is representable by an $A$-scheme
and the $A$--functor $L_G$ is representable by an ind $A$-scheme.
More precisely $L_H$ is the union of the subfunctors
$L_{H,d}$ given by those matrices $A$ for which the entries
of $A$ are Laurent series of the form  $\sum\limits_{i \geq -d} a_i t^i$.
Each  $L_{H,d}$ is representable by an affine $A$--scheme
and we set $L_{G,d}=L_{H,d} \cap L_G$.

\smallskip

For $n \geq 1$, we denote by ${_nL}_G(B)= G\bigl(B(\!(t^{1/n})\!) \bigr)$
and similarly for $L_G^+$. 
As in the introduction, we consider the polynomial ring $B^u=B[u]$
seen as a subring of the Laurent polynomial ring $B[\lambda, \lambda^{-1}]$
by applying $u$ to $\lambda -1$.
We have a natural morphism
$L_G \to {_nL}_G$.
Let $r=m/n \in \QQ_{\geq 0}$ and consider the $A$--subfunctors of $L_G$
defined by $$
{^r\!L}_G(B)= \Bigl\{ [g] \in L_G(B)  \, \mid \,
 g^{-1} \sigma_r(g) \in G(B^u[[t^{1/n}]]) \Bigr\}.
$$
if $r >0$ and
$$
{^0\!L}_G(B)= \Bigl\{ [g] \in L_G(B)  \, \mid \,
 g^{-1} g(\lambda t) \in G(B[\lambda, \lambda^{-1}][[t^{1/n}]]) \Bigr\}.
$$
For $r>s \geq 0$, we have ${^s\!L}_G \subseteq {^r\!L}_G$ according to
Proposition \ref{prop_index} (resp.\, Lemma \ref{lem_0}) in the case $s>0$
(resp.\ $s=0$).

\begin{slemma}\label{lem_ind_var} (1) For each $d \in \ZZ_{\geq 0}$,
${^r\!L}_{G,d}= L_{G,d} \cap {^r\!L}_G$ is a closed $A$--subscheme of $L_{G,d}$
and even of $L_{G,d/N}$.

\smallskip

\noindent (2) ${^r\!L}_G$ is a closed ind-subscheme of $L_G$.

\end{slemma}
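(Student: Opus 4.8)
The plan is to reduce everything to $H=\SL_N$ and then recognise the defining condition as the vanishing of finitely many coordinate functions. Since $\rho\colon G\hookrightarrow \SL_N$ is a closed immersion, taking $B(\!(t)\!)$-points shows that $L_G\hookrightarrow L_H$ and $L_{G,d}=L_G\cap L_{H,d}\hookrightarrow L_{H,d}$ are closed immersions of $A$-functors. Moreover, for $g\in L_G(B)$ the element $g^{-1}\sigma_r(g)$ automatically lies in $G\bigl(B^u(\!(t^{1/n})\!)\bigr)$; as $B^u[[t^{1/n}]]\subseteq B^u(\!(t^{1/n})\!)$ and $G\hookrightarrow\SL_N$ is closed, it lies in $G\bigl(B^u[[t^{1/n}]]\bigr)$ if and only if it lies in $\SL_N\bigl(B^u[[t^{1/n}]]\bigr)$, i.e. if and only if its entries contain no negative power of $t^{1/n}$. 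Hence ${^r\!L}_G=L_G\cap {^r\!L}_H$, and it suffices to prove that ${^r\!L}_{H,d}$ is closed in $L_{H,d}$; intersecting with the closed subfunctor $L_G$ then gives part (1).

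For $H=\SL_N$ I would write a point $g\in L_{H,d}(B)$ as $g=t^{-d}\ug$ with $\ug\in\Mat_N(B[[t]])$ and $\det(\ug)=t^{Nd}$, so that the coefficients of the entries $P_{i,j}$ of $\ug$ are the coordinate functions of the affine scheme $L_{H,d}$. By the computation of Proposition \ref{prop_index}, the $(i,j)$-entry of $g^{-1}\sigma_r(g)$ is $(1+ut^r)^{-d}\bigl(\delta_{i,j}+t^{-Nd}\sum_{a\ge 0,\,b\ge 1}c^{a,b}_{i,j}\,t^{a+rb}u^b\bigr)$, where each $c^{a,b}_{i,j}$ is a polynomial, hence regular, function of the coordinates. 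As $(1+ut^r)^{-d}$ is a unit in $B^u[[t^{1/n}]]$, membership in ${^r\!L}_{H,d}$ is the requirement that all terms of negative $t^{1/n}$-exponent vanish. Writing $r=m/n$ with $(m,n)=1$, admissible pairs with the same $t$-exponent differ by $(m,-n)$ and so carry distinct powers of $u$; hence the condition is the family of individual equations $c^{a,b}_{i,j}=0$ for all $(i,j)$ and all $(a,b)$ with $-Nd+a+rb<0$. Since $a\ge 0$, $b\ge 1$ and $an+mb<Nnd$, there are finitely many such pairs, so ${^r\!L}_{H,d}$ is cut out by finitely many equations and is closed in $L_{H,d}$. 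For $r=0$ the same argument applies with $\sigma_0$ and the multiplicative variable $\lambda$, using Lemma \ref{lem_0}.

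Taking $r=Nd$ (so $n=1$), each exponent is $-Nd+a+Ndb=a+Nd(b-1)\ge 0$ because $a\ge 0$ and $b\ge 1$; therefore $g^{-1}\sigma_{Nd}(g)\in G(B^u[[t]])$ for every $g\in L_{G,d}(B)$, which shows $r(g)\le Nd$ over an arbitrary base $A$, i.e. $L_{G,d}\subseteq {^{Nd}\!L}_G$, strengthening Corollary \ref{cor_main}.(1). Placing ${^r\!L}_{G,d}$ inside the smaller $L_{G,d/N}$ is the delicate point, and is where I expect the real work: a crude entrywise bound on the pole order does not suffice, and the plan is to feed the determinant identity $\det(\ug)=t^{Nd}$ into the vanishing equations $c^{a,b}_{i,j}=0$ so as to turn them into a pole-order constraint carrying the factor $N$. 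I expect this determinant bookkeeping to be the main obstacle.

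Finally, part (2) is formal. One has ${^r\!L}_G=\bigcup_d {^r\!L}_{G,d}$ compatibly with the ind-presentation $L_G=\bigcup_d L_{G,d}$, since ${^r\!L}_{G,d}={^r\!L}_{G,d+1}\cap L_{G,d}$. By part (1) each ${^r\!L}_{G,d}={^r\!L}_G\cap L_{G,d}$ is a closed subscheme of $L_{G,d}$, so ${^r\!L}_G$ meets every step of the ind-scheme in a closed subscheme and is therefore a closed ind-subscheme of $L_G$.
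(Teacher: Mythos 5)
Your closedness argument is correct and is essentially the paper's own proof made explicit. The paper also reduces to $H=\SL_N$ and encodes the condition through the morphism $(g,u)\mapsto g^{-1}\sigma_r(g)$ from $L_{H,d}\times_A\mathbf{A}^1_A$ to ${_nL}_{H,nNd}\times_A\mathbf{A}^1_A$, but instead of writing down equations it observes that ``factoring through the closed subscheme ${_nL}_{H,0}\times_A\mathbf{A}^1_A$'' is a closed condition on $g$ because $\GG_{a,A}$ (resp.\ $\GG_{m,A}$ in the case $r=0$) is free over $A$, citing \cite[VI$_B$.6.2.4]{SGA3}. Your finitely many equations $c^{a,b}_{i,j}=0$ are the concrete form of that closed condition, and your remark that distinct pairs $(a,b)$ contribute distinct monomials $t^{a+rb}u^b$ is the right justification for splitting the vanishing into individual equations. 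One small caveat: for $r=0$ the finiteness of the system does not follow from the inequality $an+mb<Nnd$, which no longer bounds $b$; you need instead that $c^{a,b}_{i,j}=0$ for $b>a$, because the coefficient of $t^a$ in $\sum_k\Delta_{i,k}(t)\bigl(P_{k,j}(t(1+\epsilon))-P_{k,j}(t)\bigr)$ is a polynomial of degree at most $a$ in $\epsilon$. Part (2) is handled in the paper exactly as you do it.

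Concerning the clause ``and even of $L_{G,d/N}$'': you were right not to sink work into it, but for a stronger reason than you suspect. As literally stated the containment ${^r\!L}_{G,d}\subseteq L_{G,d/N}$ is false: for $\GG_m\subset\SL_2$ and $g=\mathrm{diag}(t^{-d},t^d)$ one has $r(g)=0$, so $g\in{^0\!L}_{\GG_m,d}$, yet the entry $t^{-d}$ prevents $g$ from lying in $L_{\GG_m,d/2}$. The paper's own justification of this clause is a one-line appeal to Corollary \ref{cor_main}, which bounds the index by $N$ times the pole order and therefore gives no upper bound on the pole order; the clause is garbled. What the paper actually extracts from this lemma (see the Remark following Corollary \ref{cor_main}) is precisely the statement you did prove: since $-Nd+a+Ndb\geq 0$ for $a\geq 0$, $b\geq 1$, one gets $L_{G,d}\subseteq{^{Nd}\!L}_G$, i.e.\ $r(g)\leq Nd$ over an arbitrary ring $A$. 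So your ``delicate point'' is a defect of the statement rather than a gap in your argument, and no determinant bookkeeping is needed.
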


\begin{proof} (1) Once again one can work with $H=\SL_N$.
\smallskip
\noindent{\it First case: $r>0$:}
For each $d \geq 0$, we have by definition
$$
{^r\!L}_{H,d}(B) =  \Bigl\{ g\in L_{H,d}(B)  \, \mid \,
 g^{-1} \sigma_r(g) \in \SL_N(B^u[[t^{1/n}]])={_nL}_{H,d}(B^u) \Bigr\}.
$$
We consider the $\mathbf{A}^1_A$-morphism
$\psi_{r,d}:
L_{H,d} \times_A \mathbf{A}^1_A \to  {_nL}_{H,nNd} \times_A \mathbf{A}^1_A$
defined
by $(g,u) \mapsto  g^{-1} \sigma_r(g)$.
The $A$--functor ${^r\!L}_{H,d}$ is the $A$--subfunctor of
$L_{H,d}$ consisting in the $g \in L_{H,d}$ such that
the map $u \mapsto  g^{-1} \sigma_r(g)=\psi_{r,d}(g)$ factorizes through
the closed $A$-subscheme ${_nL}_{H,0} \times_A \mathbf{A}^1_A$.
Since $\GG_{a,A}$ is free over $A$ (i.e. $A[t]$ is a free $A$--module),
it follows that the $A$--functor ${^r\!L}_{H,d}$ is representable by
a closed $A$--subscheme of $L_{H,d}$ according to \cite[VI$_B$.6.2.4]{SGA3}.
According to Corollary \ref{cor_main}, for each $A$--field $E$,
we have ${^r\!L}_{H,d}(E) \subseteq {L}_{H,N/d}(E)$
so that ${^r\!L}_{H} \subseteq {L}_{H,N/d}$.

\smallskip

\noindent{\it  Case $r=0$}. The preceding argument
works verbatim with the morphism \break
$\psi_{d}: {L_{H,d} \times_A \GG_{m,A}} \to  L_{H,Nd} \times_A \GG_{m,A}$,
$(g,\lambda) \,  \mapsto  \, g^{-1} g(\lambda t)$.

\smallskip

\noindent (2) This is a straightforward consequence of (1).
\end{proof}

\begin{sremark}{\rm The ${^r\!L}_G$'s have no reason to be
$A$--schemes and this happens already for $\GG_m$
since ${^0\!L}_{\GG_m}={L}_{\GG_m}$.
}
\end{sremark}

By specialization at $t=0$, we get the residue $A$--functors
$$
{^r\!\res}_G : {^r\!L}_{G} \to \Hom_{gr}(\GG_a,G)  \enskip (r>0),
{^0\!\res}_G :  {^0\!L}_{G} \to \Hom_{gr}(\GG_m,G).
$$

\medskip

By definition, the affine grassmannian $\cQ_G$ of $G$ is the
fppf sheafification of the $A$--functor
$$
B \, \mapsto  \,  \cF(B) = G(B(\!(t)\!))/ G(B[[t]]).
$$
Let $r \in \QQ_{\geq 0}$ and consider the $A$--subfunctor of $\cF$
defined by
$$
 \cF^{r}(B) =  \Bigl\{ [g] \in {^r\!L}_G(B) / L_G^+(B) \Bigr\}.
$$
It is an $A$--subfunctor granting to Lemma \ref{lem_sorites}.(2)
and we denote its fppc sheafification by ${^r\!\cQ}_G$.
Clearly the map $LG \to \cQ_G$ induces
an isomorphism $({^r\!L}_G/L^+_G)_{fppf} \simlgr {^r\!\cQ}_G$.
Also the residue $A$--functors give rise to
the $A$--functors
$$
{^r\!\res}_G : {^r\!\cQ}_{G} \to \overline{\Hom_{gr}}(\GG_a,G)  \enskip (r>0),
{^0\!\res}_G :  {^0\!\cQ}_{G} \to \overline{\Hom_{gr}}(\GG_m,G);
$$
where $\overline{\Hom_{gr}}(\GG_a,G)$ is the fppf quotient 
$\Hom_{gr}(\GG_a,G)/G$ and similarly for $\GG_m$. 

\begin{slemma} \label{lem_fixed} ${^0\!\cQ}_G$ is the 
fixed locus for the rotation action of $\GG_m$  on $\cQ_G$.
\end{slemma}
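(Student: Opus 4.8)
The plan is to identify the two subfunctors of $\cQ_G$ pointwise. Since the fixed locus $\cQ_G^{\GG_m}$ and ${^0\!\cQ}_G$ are both fppf subsheaves of $\cQ_G$, membership in either is an fppf-local condition on $B$; so, given a section of $\cQ_G(B)$, I may pass to an fppf cover and assume it is the class $[g]$ of an honest element $g \in L_G(B) = G(B(\!(t)\!))$, since $\cQ_G$ is the sheafification of $L_G/L^+_G$. Writing $R = B[\lambda,\lambda^{-1}]$ and letting $\lambda \in \GG_m(R)$ be the universal unit, the point $[g]$ lies in the fixed locus if and only if $\delta(\lambda).[g] = [g]$ in $\cQ_G(R)$. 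Because $\delta$ acts by $\delta(b).g = g(b^{-1}t)$ and preserves $L^+_G$, it descends to $\cQ_G$ with $\delta(\lambda).[g] = [g(\lambda^{-1}t)]$, so the fixed condition reads $[g(\lambda^{-1}t)] = [g]$ in $\cQ_G(R)$.

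Next I would translate this equality of classes into an integrality statement. By the definition of the sheafification $\cQ_G$, the identity $[g(\lambda^{-1}t)] = [g]$ holds in $\cQ_G(R)$ exactly when, fppf-locally on $R$, the element $\eta := g^{-1} g(\lambda^{-1}t) \in G(R(\!(t)\!))$ lies in $L^+_G$, i.e. there is a faithfully flat cover $R \to R'$ with $\eta_{R'} \in G(R'[[t]])$. This already yields the easy inclusion: if $[g] \in \cF^0(B)$, so that $g \in {^0\!L}_G(B)$ and $g^{-1}g(\lambda t) \in G(R[[t]])$, then applying the automorphism $\lambda \mapsto \lambda^{-1}$ of $R$ gives $\eta \in G(R[[t]]) = L^+_G(R)$, whence $[g(\lambda^{-1}t)] = [g]$ and $[g]$ is fixed; as fixedness is fppf-local, all of ${^0\!\cQ}_G$ lands in the fixed locus.

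The substantive direction is the converse, and the hard part will be upgrading the fppf-local integrality of $\eta$ to genuine integrality over $R$ itself — this is exactly what is needed to land in $\cF^0(B)$ without a further cover of $B$. Here I would argue as in Lemma \ref{lem_sorites2}: since $R \to R'$ is faithfully flat it is injective, and comparing coefficients gives $R[[t]] = R'[[t]] \cap R(\!(t)\!)$ inside $R'(\!(t)\!)$; because $G$ is affine, a point of $G(R(\!(t)\!))$ whose image in $G(R'(\!(t)\!))$ already lies in $G(R'[[t]])$ must lie in $G(R[[t]])$. Applied to $\eta$ this gives $g^{-1}g(\lambda^{-1}t) \in G(R[[t]])$, and the automorphism $\lambda \mapsto \lambda^{-1}$ turns it into $g^{-1}g(\lambda t) \in G(R[[t]]) = G(B[\lambda,\lambda^{-1}][[t]])$, i.e. $g \in {^0\!L}_G(B)$. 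Hence $[g] \in \cF^0(B) \subseteq {^0\!\cQ}_G(B)$, completing the identification. I expect the only delicate points to be the bookkeeping of where the covers live (over $R$ versus over $B$) and the verification that the rotation action descends to $\cQ_G$ compatibly with $\delta(\lambda).[g] = [g(\lambda^{-1}t)]$; the descent step powered by affineness of $G$ is the genuine engine of the proof.
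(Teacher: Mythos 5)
Your proof is correct and follows essentially the same route as the paper's: reduce fppf-locally to a lift $g \in L_G(B)$, test fixedness against the universal unit $\lambda \in \GG_m(B[\lambda,\lambda^{-1}])$, and identify the fixed condition with $g^{-1}g(\lambda t) \in G\bigl(B[\lambda,\lambda^{-1}][[t]]\bigr)$, i.e.\ $g \in {^0\!L}_G(B)$. The only difference is that you make explicit the descent step (using $R[[t]] = R'[[t]] \cap R(\!(t)\!)$ and affineness of $G$ to upgrade fppf-local integrality of $g^{-1}g(\lambda t)$ to integrality over $R$ itself), which the paper leaves implicit when it directly asserts $g(\lambda t) = g(t)h$ with $h \in L_G^+(C)$.
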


\begin{proof} The fixed locus  ${^\sharp \! \cQ}_G$ is the $A$--subfunctor of
 $\cQ_G$ defined by
  $$
  {^\sharp \! \cQ}_G(B)= \Bigl\{ x \in \cQ_G(B) \, \mid \, x(c t)=
  x(t) \enskip \forall \hbox{\enskip $B$--algebra $C$ and \enskip} \forall c \in
  C^\times \Bigr\}.
 $$
 Clearly ${^0\! \cQ}_G$ is an $A$--subfunctor of ${^\sharp \! \cQ}_G$.
 Conversely we are given an $A$--algebra $B$ and
 an element $x \in {^\sharp \! \cQ}_G(B)$. To show that
 $x$ is fixed is local for the fppf topology so that
 we may assume that $x$ lifts to an element $g \in L_G(B)$.
 We take $C=B[\lambda^{\pm 1}]$ so that $x(\lambda t)= x(t) \in \cQ_G(C)$
 It follows that $g(\lambda t) = g(t) h$ with $h \in L_G^+(C)$.
 hence  $g^{-1} \, g(\lambda t) \in G\bigl( B[\lambda^{\pm 1}][[t]] \bigr)$.
 In other words $g$ belongs  to ${^0\!L}_G(B)$ and $x \in {^0\! \cQ}_G(B)$.
\end{proof}

\subsection{Ind-schemes, II}
Now we assume that the quotient $\SL_N/G$ is representable
by an quasi--affine $A$--scheme.
In this case, the structure of ind $A$--scheme of $\cQ_{\SL_N}$
induces a structure of ind $A$--scheme on $\cQ_{G}$
such that the map $\cQ_G \to \cQ_{\SL_N}$ is a locally closed immersion
(which is closed if $\SL_N/G$ is affine), see \cite[lemma 2.14]{Go}.

\begin{sproposition}
 ${^r \!\cQ}_G$ is a closed  $A$--ind-subscheme of $\cQ_G$.
\end{sproposition}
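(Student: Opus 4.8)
The plan is to descend the closedness already established at the level of loop groups in Lemma \ref{lem_ind_var} through the fppf quotient $q \colon L_G \to \cQ_G$ obtained by sheafifying $B \mapsto L_G(B)/L^+_G(B)$. First I would record that ${^r\!L}_G$ is saturated for $q$, i.e. that ${^r\!L}_G = q^{-1}({^r\!\cQ}_G)$. Indeed, for $h \in L^+_G(B) = G(B[[t]])$ one has $(gh)^{-1}\sigma_r(gh) = h^{-1}\,\bigl(g^{-1}\sigma_r(g)\bigr)\,\sigma_r(h)$, and since $h, \sigma_r(h) \in G(B^u[[t^{1/n}]])$, the defining condition of ${^r\!L}_G$ is right $L^+_G$--invariant; this is exactly Lemma \ref{lem_sorites}.(1), and the case $r=0$ is identical using $\lambda$. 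Consequently ${^r\!\cQ}_G = ({^r\!L}_G/L^+_G)_{\mathrm{fppf}}$ is precisely the image of the closed ind-subscheme ${^r\!L}_G$.

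Being a closed ind-subscheme is checked term by term, so next I would fix a term $\cQ_G^{(i)}$ of an exhaustive filtration of $\cQ_G$ by closed quasi-projective $A$--subschemes. Such a filtration exists because, under the running hypothesis that $\SL_N/G$ is quasi-affine, $\cQ_G \hookrightarrow \cQ_{\SL_N}$ is a locally closed immersion and $\cQ_{\SL_N}$ carries the standard filtration by the bounded pieces $\cQ^{\leq d}_{\SL_N}$. It then suffices to prove that ${^r\!\cQ}_G \cap \cQ_G^{(i)}$ is closed in $\cQ_G^{(i)}$.

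Since $\cQ_G^{(i)}$ is quasi-compact, the restriction of the $L^+_G$--torsor $q$ to it is induced, at a suitable finite jet level, from a torsor under the finite-type group $G_w$. I would therefore choose an fppf cover $V \to \cQ_G^{(i)}$ by a finite-type affine $A$--scheme together with a lift $\tilde s \colon V \to L_G$ of $V \to \cQ_G^{(i)} \hookrightarrow \cQ_G$ whose image is contained in a bounded piece $L_{G,d}$; the bound exists because $\cQ_G^{(i)}$ sits inside some $\cQ^{\leq d_0}_{\SL_N}$, so the lattice attached to any point is squeezed between $t^{d_0}$ and $t^{-d_0}$ times the standard lattice $B[[t]]^N$ and the lift may be taken with $\tilde s,\tilde s^{-1}$ of bounded pole order. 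By the saturation recorded above, the pullback of ${^r\!\cQ}_G \cap \cQ_G^{(i)}$ along $V \to \cQ_G^{(i)}$ equals $\tilde s^{-1}({^r\!L}_{G,d})$; and by Lemma \ref{lem_ind_var}.(1) the subscheme ${^r\!L}_{G,d}$ is closed in $L_{G,d}$, so this pullback is a closed subscheme of $V$.

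Finally I would invoke fppf descent of closed immersions: since $V \to \cQ_G^{(i)}$ is faithfully flat of finite presentation and the closed subscheme $\tilde s^{-1}({^r\!L}_{G,d}) \subseteq V$ descends the subfunctor ${^r\!\cQ}_G \cap \cQ_G^{(i)}$, the latter is represented by a closed subscheme of $\cQ_G^{(i)}$. As $i$ is arbitrary, ${^r\!\cQ}_G$ is a closed $A$--ind-subscheme of $\cQ_G$. The main obstacle is the construction in the third step, namely producing a finite-type fppf cover of $\cQ_G^{(i)}$ together with a lift into bounded loops $L_{G,d}$; this is exactly where the standard structure of the affine Grassmannian enters — that the torsor $q$ is, over each quasi-compact piece, induced from a finite-dimensional jet-group torsor and admits bounded lifts — after which everything reduces to Lemma \ref{lem_ind_var} and routine fppf descent.
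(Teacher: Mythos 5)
Your proof is correct and follows essentially the same route as the paper's: descend the closedness of ${^r\!L}_{G,d}\subseteq L_{G,d}$ from Lemma \ref{lem_ind_var} through the locally trivial $L_G^+$--torsor $L_{G,d}\to\cQ_{G,d}$, using right $L_G^+$--saturation (Lemma \ref{lem_sorites}.(1)) and descent of closed immersions. The only cosmetic difference is that the paper first reduces to $H=\SL_N$ (where the torsor is \'etale--locally trivial and the general case follows via the locally closed immersion $\cQ_G\hookrightarrow\cQ_{\SL_N}$), whereas you work with $G$ directly by constructing a bounded fppf-local lift by hand.
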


\begin{proof}
 It is enough to consider the case of $\SL_N$.
 For each $d \geq 0$,
 the map $\SL_{N,d} \to \cQ_{H,d}$ is a $L_H^+$--torsor (locally trivial
 for the \'etale topology).
 It follows that the quotient sheaf
 ${^r\!\SL}_{N,d} / L_H^+$ is representable
 by a closed $A$-subscheme of the $A$-scheme $\cQ_{H,d}$.
 Thus ${^r \!\cQ}_G$ is a closed  $A$--ind-subscheme of $\cQ_G$.
\end{proof}

\subsection{Case of a split reductive $k$--group}
We work here over a base field $k$ and assume that $G$ is a
split reductive $k$--group equipped with a Killing couple
$(B,T)$. We denote by $\Phi=\Phi(G,T)$ the associated
root system and by $\Delta$ the subset of simple roots
with respect to $B$.
For each $\alpha \in \Phi$, we denote by $U_\alpha$ the image of 
the root homomorphism $u_\alpha: \GG_a \to
G$ attached to $\alpha$.
We shall use also the $k$--group $L^{<0}_G$ defined by 
$L^{<0}_G(R)= G\bigl(R[\frac{1}{t}]\bigr)$ for each $k$--algebra $R$.

We  consider the cell $L_G^\mu = L_G^+ \,  t^\mu \, L_G^+$ where $\mu$ 
is a non-negative coweight $\mu: \GG_m \to T$ and the corresponding  affine Schubert cell
$\cQ_\mu = (L_G^+ \, t^\mu\, L_G^+)/ L_G^+$ of $\cQ$. We know 
that $\cQ_\mu$ is a smooth $k$--variety.
Also the map $s_0: L_G^+ \to G$ induces a $G$--equivariant map
$$
p_\mu: \cQ_\mu \to G/P_G(-\mu)
$$
where $P_G(-\mu)$ stands for the Richardson
parabolic subgroup attached to $\mu$ \cite[\S 2]{NP}.
The orbit map $G \to \cQ_\mu$, $g \mapsto  \, g \,  t^\mu$ induces
a closed immersion  $i_\mu:  G/P_G(-\mu) \to \cQ_\mu$ which
is a section of $p_\mu$.

We consider the $k$--groups $L^{\geq \mu} G= t^\mu L^+_G t^{-\mu}$
and $L^{<\mu} G= t^\mu L^{<0}_G t^{-\mu}$.
Let $J= L^+_G \cap s_0^{-1}(B)$ 
and define $J^{\geq \mu}= J \cap L^{\geq \mu}_G$
and $J^\mu=  J \cap L^{< \mu}_G$.

According to \cite[lemme 2.2]{NP}, we have an isomorphism
$J^\mu \times J^{\geq \mu} \simlgr  J$ and the 
map $J^\mu \to \cQ_\mu$, 
$g \to g \, t^\mu$ is an open immersion; we denote by $\Omega^\mu$ its image.
We choose a total order on $\Phi_+$; furthermore the product in $L^+_G$ induces
an isomorphism
(of $k$--varieties)

\[
\xymatrix{
j_\mu : & \prod\limits_{ \alpha \in \Phi
\mid \langle \mu , \alpha \rangle \geq 1 }
\prod\limits_{ i=0}^{\langle \mu , \alpha \rangle} U_{\alpha,i}
  & \simlgr    &  J^\mu
  }
\]
where $U_{\alpha,i}$ is the image of $u_{\alpha,i}: \GG_a\to L_G^+$,
$x \mapsto u_{\alpha}(t^i x)$.

\begin{slemma} \label{lem_bruhat} The following diagram 

\[
\xymatrix{
\prod\limits_{ \alpha \in \Phi
\mid \langle \mu , \alpha \rangle \geq 1 }
\prod\limits_{ i=0}^{\langle \mu , \alpha\rangle} U_{\alpha,i} \ar[d] & \simlgr    &  
J^\mu \enskip  \ar@{^{(}->}[r] & \cQ_\mu \ar[d]^{p_\mu} \\
\prod\limits_{ \alpha \in \Phi
\mid \langle \mu , \alpha \rangle \geq 1 }
\GG_{a} & \simlgr& U_G(\mu) \enskip  \ar@{^{(}->}[r] & G/P_G(-\mu) 
}
\]
commutes and is cartesian where the bottom map is induced by 
the embedding $U_G(\mu) \hookrightarrow G$ and the left vertical map is the projection on
the factors involving $i=0$.
 
\end{slemma}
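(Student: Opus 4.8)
The plan is to verify both assertions functorially, on $R$--points for an arbitrary $k$--algebra $R$. Since the two horizontal composites begin with the isomorphisms $j_\mu$ and $\prod_\alpha \GG_a \simlgr U_G(\mu)$, the whole content lies in the outer square, so I write a point of the source as $g = j_\mu\bigl((x_{\alpha,i})\bigr) = \prod_{\alpha,i} u_\alpha(t^i x_{\alpha,i}) \in J^\mu(R)$. Recall that $p_\mu$ sends $g\, t^\mu L_G^+$ to $s_0(g)\, P_G(-\mu)$, where $s_0\colon L_G^+ \to G$ is reduction modulo $t$. First I would prove commutativity: as $s_0$ is a group homomorphism and $s_0\bigl(u_\alpha(t^i x)\bigr)$ equals $u_\alpha(x)$ for $i=0$ and is trivial for $i\ge 1$, one gets $s_0(g) = \prod_\alpha u_\alpha(x_{\alpha,0}) \in U_G(\mu)(R)$. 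Hence the top-then-right path sends $(x_{\alpha,i})$ to the big-cell point of coordinates $(x_{\alpha,0})_\alpha$, which is exactly the left-then-bottom path. This settles commutativity.

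Next I would reformulate the cartesian property. Base-changing the open immersion $U_G(\mu) \hookrightarrow G/P_G(-\mu)$ along $p_\mu$ identifies the fibre product $U_G(\mu)\times_{G/P_G(-\mu)} \cQ_\mu$ with the open subscheme $p_\mu^{-1}\bigl(U_G(\mu)\bigr)$ of $\cQ_\mu$, and under this identification the comparison map is just the open immersion $J^\mu \simlgr \Omega^\mu \hookrightarrow \cQ_\mu$. Thus the square is cartesian if and only if $\Omega^\mu = p_\mu^{-1}\bigl(U_G(\mu)\bigr)$. The inclusion $\Omega^\mu \subseteq p_\mu^{-1}\bigl(U_G(\mu)\bigr)$ is precisely the commutativity just proved. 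For the reverse inclusion I would exploit that $\mu$ is dominant, so every $\alpha$ with $\langle \mu,\alpha\rangle \geq 1$ lies in $\Phi_+$ and therefore $U_G(\mu) \subseteq B \subseteq J$. Viewing $U_G(\mu)$ as constant loops, it then preserves the $J$--orbit $\Omega^\mu = J\, t^\mu L_G^+$ and acts on $G/P_G(-\mu)$ through $s_0$ compatibly with the $G$--equivariant map $p_\mu$. Since $U_G(\mu)$ acts simply transitively on its big cell, $p_\mu^{-1}\bigl(U_G(\mu)\bigr) = U_G(\mu)\cdot p_\mu^{-1}(\overline e)$, and the problem reduces to the single inclusion $p_\mu^{-1}(\overline e) \subseteq \Omega^\mu$ for the base point $\overline e$ of $G/P_G(-\mu)$.

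The main obstacle is exactly this last inclusion: a priori the fibre of $p_\mu$ over $\overline e$ could meet the complement $\cQ_\mu \setminus \Omega^\mu$. Inside $\Omega^\mu \cong J^\mu$ this fibre is cut out, by the commutativity computation together with $U_G(\mu)\cap P_G(-\mu)=\{e\}$, as the coordinate affine space $\{x_{\alpha,0}=0\}$, namely $\prod_\alpha \prod_{i=1}^{\langle\mu,\alpha\rangle} U_{\alpha,i}$, whose dimension already equals that of the whole fibre of the fibration $p_\mu$. To rule out further points I would invoke the explicit description of $p_\mu$ from \cite{NP} as an (iterated) affine-space bundle over $G/P_G(-\mu)$ with section $i_\mu$, equivalently realise $p_\mu$ as the Bialynicki--Birula attractor map for the rotation $\GG_m$--action contracting $\cQ_\mu$ onto the fixed locus $i_\mu\bigl(G/P_G(-\mu)\bigr)$: the attracting set of the big cell $i_\mu\bigl(U_G(\mu)\bigr)\subseteq \Omega^\mu$ is then an open piece contained in $\Omega^\mu$ and equal to $p_\mu^{-1}\bigl(U_G(\mu)\bigr)$. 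Once $p_\mu^{-1}(\overline e)\subseteq \Omega^\mu$ is secured, the equality $\Omega^\mu = p_\mu^{-1}\bigl(U_G(\mu)\bigr)$ follows, and with it the cartesian property. I expect the verification that the fibre does not escape the open cell, rather than the bookkeeping of root coordinates, to be the only genuinely delicate point.
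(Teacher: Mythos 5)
Your reduction is sound and your commutativity computation via $s_0$ is exactly right, but the route you take for the decisive inclusion differs from the paper's and shifts all the weight onto external structure theory. You correctly identify that everything comes down to $p_\mu^{-1}\bigl(U_G(\mu).[1]\bigr)\subseteq \Omega^\mu$, and your reduction to the single fibre $p_\mu^{-1}([1])$ via the $U_G(\mu)$--action is a clean (and valid) simplification the paper does not make. For the remaining inclusion, however, the paper argues elementarily: it reduces to $\overline{k}$--points, writes a point of the fibre as $g.t^\mu$ with $g\in L_G^+(k)$, observes that since $g(0).[1]$ lies in the \emph{open} big cell $V=U_G(\mu).[1]$ the whole $k[[t]]$--point $g.[1]$ lies in $V$, hence $g\in U_G(\mu)(k[[t]])\,P_G(-\mu)(k[[t]])$; the $P_G(-\mu)(k[[t]])$ factor is absorbed because $t^{-\mu}P_G(-\mu)(k[[t]])\,t^\mu\subseteq L_G^+(k)$, and then the factorization $J^\mu\times J^{\geq\mu}\simlgr J$ restricted to $L^+_{U_G(\mu)}$ places $g.t^\mu$ in $\Omega^\mu$. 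Nothing beyond the openness of the Bruhat cell and the Iwahori-type decomposition already quoted from \cite{NP} is used.

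By contrast, your two proposed ways of closing the argument both import more than the paper has available at this point. Citing ``$p_\mu$ is an (iterated) affine-space bundle with section $i_\mu$'' is circular in the paper's own development: the representation $\cQ_\mu\cong G\wedge^{P_G(-\mu)}E^\mu$ with $E^\mu=p_\mu^{-1}([1])$ an affine space is proved in the lemma immediately \emph{after} this one, using precisely the cartesian square you are trying to establish. The Bia{\l}ynicki--Birula variant does work --- since $\Omega^\mu$ is open, stable under the rotation action, and contains the fixed points $i_\mu\bigl(U_G(\mu).[1]\bigr)$, the closed stable complement cannot contain a point whose limit lands in $\Omega^\mu$ --- but it requires the identification of $p_\mu$ with the attractor map of the rotation $\GG_m$--action, which the paper nowhere proves or assumes (it cites \cite{Zhu} only for the fixed-locus statement), and you also leave the $\GG_m$--stability of $\Omega^\mu$ unchecked. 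So: your argument can be made to go through as a heavier, citation-dependent proof, whereas the paper's is self-contained; if you want to keep your fibre-over-the-base-point reduction, you should still finish with the paper's $k[[t]]$--point argument rather than with BB theory.
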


Note that  $U_G(\mu) \to G/P_G(-\mu) $ is an open embedding (Bruhat big cell).

\begin{proof} The commutativity is obvious. We have the 
 the inclusions $\Omega_\mu \subseteq p_\mu^{-1}(U_G(\mu).[1]) \subset \cQ_\mu$
 and we have to show that $\Omega_\mu \subseteq p_\mu^{-1}(U_G(\mu).[1])$.
 
Up to extend $k$ to its algebraic closure, 
it enough to show that $p_\mu^{-1}(U_G(\mu).[1])(k) \subseteq \Omega_\mu(k)$.
We are given an element $g.t^\mu \in  p_\mu^{-1}(U_G(\mu).[1])(k)$
with $g \in L^+_G(k)$. Since $g(0).[1]$ belongs in the big Bruhat cell
$V= U_G(\mu).[1]$ of $G/ P_G(-\mu)$, it follows that 
$g.[1] \in V(k[[t]])$.
We get that $g \in  U_G(\mu)(k[[t]]) \,  P_G(-\mu)(k[[t]])$.
We can assume that $g \in U_G(\mu)(k[[t]])=L_{U_G(\mu)}^+(k)$.
The decomposition $J^\mu \times J^{\geq \mu} \simlgr  J$
induces a decomposition ${J^\mu \times \bigl(J^{\geq \mu} \cap L_{U_G(\mu)}^+\bigr) } \simlgr  L_{U_G(\mu)}^+$.
Since $J^{\geq \mu}$ fixes $t^\mu$, we get that 
$g.t^\lambda \in \Omega_{\mu}(k)$. 
\end{proof}

We get then an  an isomorphism

\[
\xymatrix{
 J^\mu_+:=\prod\limits_{ \alpha \in \Phi
\mid \langle \mu , \alpha \rangle \geq 1 }
\prod\limits_{i=1}^{ \langle \mu , \alpha \rangle}
U_{\alpha,i}  & \simlgr   & 
p_\mu^{-1}([1]) \enskip \subset \enskip  \cQ_\mu}.
\]
We put  $E^\mu=p_\mu^{-1}([1])$, it is an affine 
$k$--space equipped with a left action of $P_G(-\mu)$.
We get then a $P_G(-\mu)$-equivariant map
$G \times_k E^\mu \to \cQ_\mu$ where $G$ is equipped with the
left action provided by right translations.
The next statement is well-known.

\begin{slemma} The quotient $G \wedge^{P_G(-\mu)} E^\mu$
is representable by the smooth $k$-variety $\cQ_\mu$.
\end{slemma}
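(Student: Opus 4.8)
The plan is to realize the natural map $\bar a : G \wedge^{P} E^\mu \to \cQ_\mu$ as an isomorphism, where I abbreviate $P := P_G(-\mu)$, and to read off both representability and smoothness from the local structure over $G/P$. First I would record that the multiplication map $a : G \times_k E^\mu \to \cQ_\mu$, $(g,e) \mapsto g \cdot e$, given by the left $G$--action on $\cQ_\mu$, is invariant under the $P$--action $p \cdot (g,e) = (g p^{-1}, p \cdot e)$ defining the contracted product: here $P$ acts on $E^\mu = p_\mu^{-1}([1])$ as the stabilizer of $[1]$, and $a(gp^{-1}, p \cdot e) = (gp^{-1}) \cdot (p \cdot e) = g \cdot e = a(g,e)$. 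Thus $a$ descends to a $G$--equivariant morphism $\bar a$ which, by construction, is compatible with the projections to $G/P$ (the bundle projection on the source, $p_\mu$ on the target) and restricts to the identity on the fibre $E^\mu$ over $[1]$.

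Next I would establish representability and smoothness of the source. Since $G$ is split reductive and $P$ is a parabolic $k$--subgroup, the quotient $G \to G/P$ is a Zariski--locally trivial $P$--torsor, the big cell $U_G(\mu) \cdot [1]$ furnishing a section $U_G(\mu) \hookrightarrow G$ over an open subset of $G/P$ whose $G$--translates cover $G/P$. As $E^\mu$ is an affine space (via the isomorphism $J^\mu_+ \simlgr E^\mu$), the associated bundle $G \wedge^P E^\mu$ is then representable by a smooth $k$--scheme, being Zariski--locally isomorphic to $W \times_k E^\mu$ for $W$ an open of $G/P$.

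Then I would show $\bar a$ is an isomorphism by reducing to the big cell. By $G$--equivariance and the fact that the $G$--translates of $U_G(\mu) \cdot [1]$ cover $G/P$ after base change to $\overline k$, it suffices to check that $\bar a$ is an isomorphism over $U_G(\mu) \cdot [1]$. Over the section $U_G(\mu) \hookrightarrow G$ the source trivializes to $U_G(\mu) \times_k E^\mu$, while $p_\mu^{-1}(U_G(\mu) \cdot [1])$ is the open cell $\Omega^\mu$. Writing $g = u\, j_+$ with $u \in U_G(\mu)$ (the $i{=}0$ factors) and $j_+ \in J^\mu_+$, one has $g\, t^\mu = u \cdot (j_+\, t^\mu)$ with $j_+\, t^\mu \in E^\mu$, so under the identifications $\Omega^\mu \simlgr J^\mu \simlgr U_G(\mu) \times E^\mu$ supplied by the product decomposition of $J^\mu$, the map $\bar a$ becomes the identity.

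The statement is classical, so the only genuine work is the bookkeeping in this last step: making sure that the two decompositions of the open cell --- the one coming from the $P$--torsor trivialization of the source and the one coming from $J^\mu = (i{=}0) \times J^\mu_+$ on the target --- match up, so that $\bar a$ is an isomorphism of schemes and not merely a bijection on points. The cartesian diagram of Lemma \ref{lem_bruhat}, together with the open immersion $U_G(\mu) \hookrightarrow G/P$, provides exactly this compatibility, after which $G$--equivariance propagates the isomorphism over all of $G/P$; smoothness of $\cQ_\mu$ is then either inherited from the local product structure or simply the already-known smoothness of the affine Schubert cell.
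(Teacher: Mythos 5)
Your proof is correct, but it follows a genuinely different route from the paper's. The paper establishes representability abstractly: it invokes Lemma \ref{lem_artin} (fpqc descent for affine morphisms along the $P_G(-\mu)$--torsor $G \to G/P_G(-\mu)$, using freeness of the action and affineness of $G\times_k E^\mu \to G$), gets finite type and smoothness from the permanence and descent results of EGA IV, and then proves that the comparison map to $\cQ_\mu$ is an isomorphism by checking it on geometric fibres over $G/P_G(-\mu)$ and appealing to Grothendieck's fibrewise isomorphism criterion \cite[$_4$.17.9.5]{EGA4}. You instead exploit the special structure available here: Zariski--local triviality of $G \to G/P_G(-\mu)$ (via the big cell and its translates), which makes the associated bundle visibly a scheme, smooth because it is Zariski--locally $W \times_k E^\mu$; and you verify the isomorphism explicitly over the big cell by matching the torsor trivialization of the source with the decomposition $J^\mu \simeq U_G(\mu)\times J^\mu_+$ supplied by the cartesian diagram of Lemma \ref{lem_bruhat}, then propagate by $G$--equivariance. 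Your argument is more elementary and self-contained within the split reductive setting (it avoids both the descent lemma and the fibrewise criterion), at the cost of being tied to the existence of Zariski--local sections, whereas the paper's argument is the one that generalizes to torsors that are only fppf--locally trivial. Both are complete; the one point you rightly flag --- that the two trivializations of $\Omega^\mu$ agree --- is indeed exactly what the cartesian property in Lemma \ref{lem_bruhat} delivers.
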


\begin{proof} The action $P_G(-\mu)$ on  $G \times E^\mu$  (and on $G$) is free.
We apply Lemma \ref{lem_artin} to  the $P_G(-\mu)$-morphism
$G \times_k E^\mu \to G$ and get that the fppf quotient  $G \wedge^{P_G(-\mu)} E^\mu$
is representable by a $k$--scheme; we observe that this $k$--scheme 
is of finite type according to the permanence properties \cite[$_2$.2.7.1]{EGA4}.
Similarly,  using \cite[$_4$.17.7.3]{EGA4}, we 
see that $G \wedge^{P_G(-\mu)} E^\mu$ is smooth over $G/P_G(-\mu)$.

By construction, the map  $G/P_G(-\mu)$-map $G \wedge^{P_G(-\mu)} E^\mu \to \cQ_\mu$ 
is an isomorphism over all geometric fibers over  $G/P_G(-\mu)$.
According to Grothendieck's fiberwise isomorphism
criterion \cite[$_4$.17.9.5]{EGA4}, we conclude that $Z \to \cQ_\mu$ 
is an isomorphism.

\end{proof}

\begin{sremarks}{\rm
 (a) If $\mu$ is minuscule, then the map  $p_\mu$ is an isomorphism.

 \smallskip

 \noindent (b) If $\mu$ is quasi-minuscule (i.e. minimal
 but not minuscule), $p_\mu$ is the line bundle of
  Ng\^o-Polo \cite[\S 7]{NP}.
 }
\end{sremarks}

\begin{sproposition}\label{prop_fixed0} \cite[after 2.1.11]{Zhu}
The morphism $i_\mu$ induces an isomorphism \break
$G/P_G(-\mu) \simlgr  {^\sharp\!\cQ}_\mu$ on the fixed point locus
for the rotation action. 
\end{sproposition}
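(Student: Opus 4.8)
The plan is to identify ${^\sharp\!\cQ}_\mu$ --- the fixed locus of the rotation $\GG_m$-action on the Schubert cell $\cQ_\mu$, in the sense of Lemma \ref{lem_fixed} --- with the image of the section $i_\mu$, by reducing the computation to the single fibre $E^\mu = p_\mu^{-1}([1])$. First I would record how $\delta$ interacts with the rest of the structure. For $h = g\,t^\mu \in L_G$ one computes $\delta(c).[g\,t^\mu] = [h(c^{-1}t)] = [g(c^{-1}t)\,c^{-\mu}\,t^\mu]$; since $c^{-\mu} = \mu(c^{-1}) \in T \subseteq L^+_G$ commutes with $t^\mu$, this simplifies to $[g(c^{-1}t)\,t^\mu]$. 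Two consequences are immediate: left translation by a constant $g_0 \in G$ commutes with $\delta(c)$, and $p_\mu$, being read off from the value at $t=0$, is rotation-invariant. In particular, for constant $g$ the point $[g\,t^\mu]$ is fixed, so $i_\mu(G/P_G(-\mu)) = G\cdot[t^\mu]$ lands inside ${^\sharp\!\cQ}_\mu$; as $i_\mu$ is a closed immersion this realises $G/P_G(-\mu)$ as a closed sub-ind-scheme of ${^\sharp\!\cQ}_\mu$.

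For the reverse inclusion I would use the identification $\cQ_\mu \cong G \wedge^{P_G(-\mu)} E^\mu$ from the preceding lemma. Because $\delta$ commutes with the left $G$-action and preserves the fibres of $p_\mu$, one has $\delta(c).[g,e] = [g,\delta(c).e]$, whence ${^\sharp\!\cQ}_\mu = G \wedge^{P_G(-\mu)} (E^\mu)^{\GG_m}$ and everything reduces to computing $(E^\mu)^{\GG_m}$. Here I would invoke the explicit isomorphism $\prod_{\langle\mu,\alpha\rangle\geq 1}\prod_{i=1}^{\langle\mu,\alpha\rangle} U_{\alpha,i} \simlgr E^\mu$: a point is $\prod u_\alpha(t^i x_{\alpha,i})\,t^\mu$, and the formula above gives $\delta(c).u_\alpha(t^i x_{\alpha,i}) = u_\alpha(t^i\,c^{-i}x_{\alpha,i})$, so $\delta(c)$ acts linearly on the affine space $E^\mu$ with the coordinate $x_{\alpha,i}$ of weight $-i$. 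Since every exponent satisfies $i \geq 1$, all weights are strictly negative, and therefore the fixed subscheme $(E^\mu)^{\GG_m}$ is the origin, i.e. the base point $[t^\mu] = i_\mu([1])$. Feeding this back through $G\wedge^{P_G(-\mu)}(-)$ identifies ${^\sharp\!\cQ}_\mu$ with $G\wedge^{P_G(-\mu)}\{[t^\mu]\} = G/P_G(-\mu)$, and this identification is exactly $i_\mu$.

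The main difficulty I anticipate is not any single computation but the bookkeeping needed to run the fibrewise reduction at the level of fppf sheaves and ind-schemes rather than merely on $k$-points: one must check that the base point $[t^\mu]$ is genuinely $P_G(-\mu)$-fixed (which holds because $t^{-\mu}\,p\,t^\mu \in L^+_G$ for $p \in P_G(-\mu)$, by the very definition of the Richardson parabolic), that forming the fixed locus $(E^\mu)^{\GG_m}$ commutes with the associated-bundle construction $G\wedge^{P_G(-\mu)}(-)$, and that ``all weights strictly negative'' yields the reduced origin as the fixed functor in every characteristic. Granting these formal points, the weight computation on $E^\mu$ is the crux, and it is clean.
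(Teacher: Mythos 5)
Your proof is correct and follows essentially the same route as the paper: both reduce to the fibre $E^\mu$ over the base point via the $G$-action and observe that the rotation action there has all weights strictly negative, so its fixed locus is the single point $i_\mu([1])$. The only difference is in the bookkeeping you flagged: the paper justifies the reduction by noting that ${^\sharp\!\cQ}_\mu$ is smooth, hence it suffices to check a bijection on $\ol{k}$-points (using that $i_\mu$ is a closed immersion), whereas you carry the decomposition $\cQ_\mu \cong G \wedge^{P_G(-\mu)} E^\mu$ through the fixed-locus functor scheme-theoretically.
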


\begin{proof} Since $\cQ_\mu$ is smooth, 
the $k$-subvariety ${^\sharp\!\cQ}_\mu$ is smooth as well.
It is then enough to check that $i_\mu$ induces 
an isomorphism $\bigl( G/P_G(-\mu) \bigr)(\ol k) 
\simlgr {^\sharp\!\cQ}_\mu(\ol k)$.
Let $q \in  {^\sharp\!\cQ}_\mu(\ol k)$.
Up to conjugate by an element of $G(k)$ we can
assume that $p_\mu(x)=1$ so that $x \in E^\mu(k)$.
But ${^\sharp\!E_\mu}(k)=\{  p_\mu(0) \}$ so that $x =i_\mu([1])$.
\end{proof}

Taking into account Lemma \ref{lem_fixed} we get an
isomorphism $G/P_G(-\mu) \simlgr {^0 \!\cQ}_\mu$.

\begin{sproposition}\label{prop_fixed} Assume that $\mu >0$. 
Let $g \in L_G^\mu(R)$ where $R$ is a $k$--algebra.

\smallskip

\noindent (1) If $R$ is  semilocal, 
then $r(g)=0$ if and only if $g \in G(R) \, t^\mu L_G^+(R)$. 

\smallskip 

\noindent (2) Assume that $r(g)=0$ and denote by $x$ the image 
of $g$ in $G/P_G(-\mu)(R) \simlgr {^0\!\cQ_\mu}(R)$.
It defines a $P_G(-\mu)$--torsor $E(g)$. 
Then $E(g)$ is a trivial $P_G(-\mu)$--torsor if and only if
$g \in G(R)\, t^\mu L_G^+(R)$.

\end{sproposition}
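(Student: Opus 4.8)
The plan is to deduce both statements from the isomorphism ${^0\!\cQ}_\mu \simlgr G/P_G(-\mu)$ furnished by $i_\mu$ (combine Lemma \ref{lem_fixed} with Proposition \ref{prop_fixed0}), which converts the analytic condition $r(g)=0$ into the geometric condition that the class of $g$ be a rotation-fixed point of $\cQ_\mu$. First I would note that for $g \in L_G^\mu(R)$ with $\mu>0$ one has $g \notin L^+_G(R)$: in the ambient $\SL_N$ the element $t^\mu=\mu(t)$ is conjugate to a diagonal matrix with exponents summing to $0$ and not all zero, so $\mu(t) \notin \SL_N(R[[t]])$. Hence $r(g) \neq -1$, and by Lemma \ref{lem_0} the equality $r(g)=0$ is equivalent to $g \in {^0\!L}_G(R)$, i.e. to $[g] \in {^0\!\cQ}_\mu(R)$. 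Through the isomorphism above this means exactly $[g]=i_\mu(x)$ with $x=p_\mu([g]) \in (G/P_G(-\mu))(R)$.

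I would treat (2) first, as it carries the structural content. Since $i_\mu$ is induced by the orbit map $h \mapsto [h\,t^\mu]$, if $x$ lifts to some $h \in G(R)$ then $[g]=i_\mu(x)=[h\,t^\mu]$ in $\cQ_G(R)$; setting $\ell=(h\,t^\mu)^{-1}g \in L_G(R)$, its class in $\cQ_G$ is the base point. Because $L^+_G \hookrightarrow L_G$ is a closed immersion (it is $L_{G,0}$ inside $L_G$, cf. Lemma \ref{lem_ind_var}), being an $R$-point of $L^+_G$ is a condition that descends along a faithfully flat cover, so $\ell \in L^+_G(R)$ and $g = h\,t^\mu\,\ell \in G(R)\,t^\mu\,L^+_G(R)$. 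Conversely, if $g = h\,t^\mu\,\ell$ with $h \in G(R)$ and $\ell \in L^+_G(R)$, then $x=p_\mu([g])$ equals the image $\overline h$ of $h$ and so lifts. In both formulations, $x$ lifts to $G(R)$ precisely when the fibre $E(g)$ of $\pi\colon G \to G/P_G(-\mu)$ over $x$ is a trivial $P_G(-\mu)$-torsor, which is the asserted equivalence.

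For (1) the implication $(\Leftarrow)$ is easy: if $g = g_1\,t^\mu\,h$ with $g_1 \in G(R)$ and $h \in L^+_G(R)$, then Lemma \ref{lem_sorites}.(1) gives $r(g)=r(t^\mu)$, and since $\im\mu$ is commutative one has $\mu(t)^{-1}\mu(\lambda t)=\mu(\lambda) \in G(R[\lambda,\lambda^{-1}][[t]])$, so Lemma \ref{lem_0} forces $r(t^\mu)\le 0$; combined with $t^\mu \notin L^+_G(R)$ this gives $r(t^\mu)=0$. For $(\Rightarrow)$ I would rerun the reduction of (2): $r(g)=0$ produces $x \in (G/P_G(-\mu))(R)$ together with its torsor $E(g)$, and it remains to see that $E(g)$ is trivial when $R$ is semilocal. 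Here I would invoke the surjectivity of $G(R) \to (G/P_G(-\mu))(R)$ for a split reductive $G$ over a semilocal ring (the projection $\pi$ is Zariski-locally trivial via the big cell, and over a semilocal base the relevant sections lift, cf. \cite{SGA3}); granting this, $x$ lifts and (2) yields $g \in G(R)\,t^\mu\,L^+_G(R)$.

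I expect two delicate points. In (2), one must keep track that $[g]=i_\mu(x)$ is an equality in the fppf sheaf $\cQ_G$, so the passage from $[g]=[h\,t^\mu]$ to $\ell \in L^+_G(R)$ genuinely relies on $L^+_G$ being closed in $L_G$ and on faithfully flat descent; without closedness one would only obtain an fppf-local statement. The real obstacle is in (1): the semilocal surjectivity $G(R)\twoheadrightarrow(G/P_G(-\mu))(R)$ is exactly the assertion that the obstruction torsor $E(g)$ of part (2) always splits over a semilocal ring, and it is precisely what removes the torsor-theoretic correction and produces the clean double-coset description.
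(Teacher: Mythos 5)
Your proof is correct and follows essentially the same route as the paper: both identify $r(g)=0$ with $[g]\in{^0\!\cQ}_\mu(R)\simeq (G/P_G(-\mu))(R)$ via Lemma \ref{lem_fixed} and Proposition \ref{prop_fixed0}, reduce everything to whether the point $x$ lifts to $G(R)$, and for (1) invoke the semilocal surjectivity of $G(R)\to (G/P_G(-\mu))(R)$ (the paper cites \cite[XXVI.5.10.(i)]{SGA3} for exactly this). Your extra care in deriving (1) from (2) and in justifying $\ell\in L^+_G(R)$ by descent is a harmless refinement, not a different argument.
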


It follows of (1) that the strata $\cQ_\mu$ which contains $[g]$
is encoded in $\res(g)$.

\begin{proof}
(1)  If $g \in G(R)\, t^\mu L_G^+(R)$, then $r(g)=r(t^\mu)=0$.
Conversely we assume that $r(g)=0$.
We denote by $x \in {^0\!\cQ_\mu}(R)$
the  image of $g$. Since $G/P_G(-\mu) \simlgr {^0\!\cQ_\mu}$ 
and $R$ is semilocal there exists $h \in G(R)$ such that 
$x=[h.t^\mu]$ \cite[XXVI.5.10.(i)]{SGA3}. We conclude  that $g \in  G(R) \,  t^\mu L_G(R)$.  
 
 \smallskip
 
\noindent (2) If $g =h \, t^\mu \, h' \in G(R)\, t^\mu L_G^+(R)$, then 
$x=[h] \in (G/P_G(-\mu))(R)$. We have $E(g)(R) \not=\emptyset$
so that the $P_G(-\mu)$--torsor $E(g)$ is trivial.
Conversely we assume that $E(g)$ is the trivial $P_G(-\mu)$--torsor.
Then $E(g)(R) \not=\emptyset$, that is there exists $h \in G(R)$
such that $x=[h]$. It follows that $h t^\mu=g \in L_G(R)/L_G^+(R)$
whence $g \in G(R)\, t^\mu L_G^+(R)$.
\end{proof}

\begin{sproposition}\label{prop_lattice} 
\noindent (1) The ind $k$-variety ${^\sharp \!\cQ}$ is a $k$--scheme and 
we have ${^\sharp \!\cQ} =  \bigsqcup\limits_{ \mu \geq 0}  {^0\!\cQ}_\mu$.

 \smallskip
 
\noindent (2) Let $R$ be a semi-local connected $k$--algebra.
We have $${^0\!L}_G(R)= \bigsqcup\limits_{ \mu \geq 0}  G(R) \, t^\mu \, L_G^+(R) .$$

\end{sproposition}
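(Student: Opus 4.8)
The plan is to deduce both assertions from the Cartan stratification of the affine Grassmannian together with the identifications of fixed loci already established. Recall the decomposition $\cQ = \bigsqcup_{\mu \geq 0} \cQ_\mu$, under which $\cQ$ is the filtered union of the projective affine Schubert varieties $\overline{\cQ_\mu} = \bigsqcup_{\nu \leq \mu} \cQ_\nu$; here each cell $\cQ_\mu = (L_G^+ t^\mu L_G^+)/L_G^+$ is stable under the rotation action of $\GG_m$, since $\delta(b)$ preserves $L_G^+$ and sends $t^\mu$ to $\mu(b)^{-1} t^\mu \in L_G^+ t^\mu L_G^+$. Passing to fixed points then respects the stratification, and by Lemma \ref{lem_fixed} and Proposition \ref{prop_fixed0} the fixed locus of each cell is ${^\sharp\!\cQ}_\nu = {^0\!\cQ}_\nu \simlgr G/P_G(-\nu)$.

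For (1), I would work inside each projective Schubert variety. Since $\cQ_\mu$ is rotation-stable, the fixed locus satisfies ${^\sharp\!\overline{\cQ_\mu}} = \bigsqcup_{\nu \leq \mu} {^0\!\cQ}_\nu$, a decomposition of a projective $k$-scheme into the finitely many pieces $G/P_G(-\nu)$ indexed by $\{\nu \leq \mu\}$. Each such piece is proper, hence closed in the separated scheme ${^\sharp\!\overline{\cQ_\mu}}$; as they are finitely many, pairwise disjoint and cover, each is also open. Thus ${^\sharp\!\overline{\cQ_\mu}}$ is the disjoint union, as a $k$-scheme, of the projective varieties $G/P_G(-\nu)$. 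Taking the filtered colimit over $\mu$ shows that ${^\sharp\!\cQ}$ is represented by $\bigsqcup_{\mu \geq 0} {^0\!\cQ}_\mu$, which is a genuine $k$-scheme.

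For (2) I would exploit connectedness of $R$. First, each $G(R)\,t^\mu L_G^+(R)$ lies in ${^0\!L}_G(R)$: the index is left $G(R)$-invariant and right $L_G^+(R)$-invariant by Lemma \ref{lem_sorites}.(1), and $r(t^\mu)=0$, so these elements have $r \leq 0$, i.e. belong to ${^0\!L}_G(R)$ by Lemma \ref{lem_0}; moreover their images in $\cQ(R)$ lie in the components ${^0\!\cQ}_\mu$ of ${^\sharp\!\cQ}$, which are pairwise disjoint by (1), so the union on the right is disjoint. Conversely, for $g \in {^0\!L}_G(R)$ let $x \in {^\sharp\!\cQ}(R)$ be its image. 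Since $R$ is connected, $\Spec R$ meets a single open-closed component, so $x \in {^0\!\cQ}_\mu(R)$ for a unique $\mu \geq 0$. The case $\mu=0$ gives $g \in L_G^+(R) = G(R)\,t^0 L_G^+(R)$ at once. For $\mu>0$, the inclusion ${^0\!\cQ}_\mu \subset \cQ_\mu$ gives $[g] \in \cQ_\mu(R)$, whence $g \in L_G^\mu(R)$ because $L_G^\mu = L_G \times_{\cQ} \cQ_\mu$; and $\mu>0$ forces $r(g)=0$ (otherwise $g \in L_G^+(R)$ lands at the base point of $\cQ_0$), so Proposition \ref{prop_fixed}.(1) yields $g \in G(R)\,t^\mu L_G^+(R)$. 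This establishes the claimed equality.

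The main obstacle is the scheme-theoretic content of (1): one must upgrade the evident set-theoretic decomposition of the fixed points into a decomposition by open-and-closed subschemes, since it is precisely this that makes ${^\sharp\!\cQ}$ a $k$-scheme rather than merely an ind-scheme and that licenses the connectedness argument in (2). The crucial leverage is the properness of each ${^0\!\cQ}_\nu \simlgr G/P_G(-\nu)$ inside the projective variety ${^\sharp\!\overline{\cQ_\mu}}$, which forces the finitely many disjoint strata there to be open as well as closed. A secondary point needing care is the implication $[g] \in \cQ_\mu(R) \Rightarrow g \in L_G^\mu(R)$ used to feed Proposition \ref{prop_fixed}, which rests on identifying $L_G^\mu$ with the fibre product $L_G \times_{\cQ} \cQ_\mu$.
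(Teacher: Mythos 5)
Your treatment of (2) is essentially the paper's: granting (1), connectedness of $\Spec R$ isolates a single component ${^0\!\cQ}_\mu$, and Proposition \ref{prop_fixed}.(1) (this is where semilocality enters) converts membership in that component into the double coset $G(R)\,t^\mu\,L_G^+(R)$; your two flagged side-points (disjointness of the union, and $[g]\in\cQ_\mu(R)\Rightarrow g\in L_G^\mu(R)$) are handled as in the paper. The problem is in (1), at exactly the step you call the main obstacle. Your properness argument shows that the pieces ${^0\!\cQ}_\nu$, $\nu\leq\theta$, are closed subschemes of ${^\sharp\!\overline{\cQ}}_\theta$ with pairwise disjoint supports that cover it, hence that each \emph{support} is open and closed; equivalently, the canonical map $\psi_\theta:\bigsqcup_{\nu\leq\theta}{^0\!\cQ}_\nu\to{^\sharp\!\overline{\cQ}}_\theta$ is a bijective closed immersion. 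That does not yet make $\psi_\theta$ an isomorphism: a closed subscheme whose support is open need not be an open subscheme (compare $\Spec k\hookrightarrow\Spec k[\epsilon]/(\epsilon^2)$), so the open-closed component of ${^\sharp\!\overline{\cQ}}_\theta$ supported on $|{^0\!\cQ}_\nu|$ could a priori be strictly larger than ${^0\!\cQ}_\nu$, e.g. carry nilpotents in directions transverse to $\overline{\cQ}_\nu$. This is a genuine concern because $\overline{\cQ}_\theta$ is singular along its smaller strata, so the smoothness of the fixed locus invoked in Proposition \ref{prop_fixed0} is unavailable there, and $\GG_m$-fixed loci of singular projective varieties can be non-reduced.

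The paper closes precisely this hole. It first observes that the connected component $Z_{\theta,\theta}$ of ${^\sharp\!\overline{\cQ}}_\theta$ containing ${^0\!\cQ}_\theta$ has support inside the \emph{open} stratum $\cQ_\theta$; being an open subscheme of the fixed locus it is itself rotation-invariant, hence contained in ${^\sharp\!\cQ}_\theta={^0\!\cQ}_\theta$, forcing $Z_{\theta,\theta}={^0\!\cQ}_\theta$. The components attached to the lower strata are then identified by the compatibility $Z_{\theta,\mu}=Z_{\theta',\mu}$ along the filtration in $\theta$, which reduces each $Z_{\theta,\mu}$ to the top component of the smaller Schubert variety $\overline{\cQ}_\mu$ and hence to ${^0\!\cQ}_\mu$. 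Some version of this descending identification (or another argument controlling the scheme structure of ${^\sharp\!\overline{\cQ}}_\theta$ along the boundary strata) must be added to your proof of (1); without it the scheme-theoretic decomposition, and therefore the connectedness argument you use in (2), is not yet licensed.
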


\begin{proof}
(1) We write $\cQ= \limind_{\theta  \geq 0} \overline{\cQ}_\theta$ as an inductive limit of
projective varieties. Let $\theta$ be a non-negative coweight.
The $\cQ_\mu$'s for $0\leq \mu \leq \theta$ provide a stratification of 
$\overline{\cQ}_\theta$.
We consider the  map $\psi_\theta: \bigsqcup\limits_{0 \leq \mu \leq \theta}
{^0\!\cQ}_\mu \to   
{^\sharp\!\overline{\cQ}}_\theta$ and claim that it is an isomorphism.
The left-hand side is a projective variety and so is the right handside. 
Each piece ${^0\!\cQ}_\mu$ is a closed (smooth) subvariety of  $\overline{\cQ}_\theta$.
Also  ${^0\!\cQ}_\mu \cap {^0\!\cQ}_{\mu'} =\emptyset$ for $\mu ' \not = \mu$
 since the residue encodes the strata.
 It follows that $\psi_\theta$ is a closed immersion. 
 On the other hand, the map $\bigsqcup\limits_{0 \leq \mu \leq \theta} 
 {^0\!\cQ}_\mu(\ol k) \to   
{^\sharp\!\overline{\cQ}}_\theta(\ol k)$ is bijective so that 
$\psi_\theta$ is a bijective closed immersion.
If follows that $^\sharp\!\overline{\cQ}_\theta = 
\bigsqcup\limits_{0 \leq \mu \leq \theta} Z_{\theta, \mu}$
where $Z_{\theta, \mu}$ is the unique connected component of
${^\sharp\!\overline{\cQ}}_\theta$
containing the image of ${^0\!\cQ}_{\mu}$ for each $\mu$.
We observe that  $Z_{\theta, \theta}$ is closed in ${^\sharp\!\overline{\cQ}}_\theta$
and does not intersect the boundary ${^\sharp\!\overline{\cQ}}_\theta \setminus \cQ_\theta$
so that    $ {^\sharp\!\cQ}_\theta \subseteq  Z_{\theta, \theta} \subset \cQ_\theta$.
Thus  ${^\sharp\!\cQ}_\theta =  Z_{\theta, \mu}$ by taking the invariants under
the rotation action.

We oberve that $Z_{\theta, \mu}= Z_{\theta',\mu}$ for all coweights
satisfying $0 \leq \leq \theta \leq \theta'$
so that ${^\sharp\!\cQ}_\mu= Z_{\theta, \mu}$  
for all coweights
satisfying $0 \leq  \mu \leq \theta$.
Thus $\psi_\theta$ is an isomorphism for all non-negative
coweights $\theta$.

Passing to the limit on $\theta$ yieds the wished statement.

\smallskip

\noindent (2) We combine (1) and Proposition \ref{prop_fixed}.
\end{proof}

\medskip

Given an element $g$ of $L_G^\mu(k)$, we would like to investigate
in a few cases its index and its residue. We write it
$g=g_0\,  \Bigl(\prod\limits_{ \alpha \in \Phi \mid \langle \mu , \alpha \rangle \geq 2 }
u_{\alpha}\bigl( \sum\limits_{i=1}^{\langle \mu , \alpha \rangle -1} t^i x_{\alpha,i}   \bigr)
\Bigr) \enskip t^\mu$. Then $g$ and
$\Bigl(\prod\limits_{ \alpha \in \Phi \mid \langle \mu , \alpha \rangle \geq 2 }
u_{\alpha}\bigl( \sum\limits_{i=1}^{\langle \mu , \alpha \rangle -1} t^i x_{\alpha,i}   \bigr)
\Bigr) \enskip t^\mu$ have same index and conjugated residues,
so that we may work with
$g= \Bigl(\prod\limits_{ \alpha \in \Phi \mid \langle \mu , \alpha \rangle \geq 2 }
u_{\alpha}\bigl( \sum_{i=1}^{\langle \mu , \alpha \rangle -1 } t^i x_{\alpha,i}   \bigr)
\Bigr) \enskip  t^\mu$.

\noindent{\it The minuscule case.} We have $g= t^\mu$ hence $r(g)=0$ and
$\res(g)=t^\mu$.

\noindent{\it The quasi-minuscule case.} Let $\gamma$ be the unique root
 satisfying $\langle \mu ,  \gamma \rangle \geq 2$.
We have
$g= u_\gamma\bigl(  t x \bigr) \, t^\mu$ for some  $x \in k$.
For $r=m/n$, it follows that

\begin{eqnarray} \nonumber
g^{-1} \, \sigma_r(g) &= & t^{-\mu} \, u_{\gamma}(-tx) \,
u_{\gamma}( t(1+ut^r)x)\,  t^\mu \bigl( \sigma_r(t)/t\bigr)^\mu \\ \nonumber
&=&t^{-\mu} \, u_{\gamma}\bigl( u t^{r+1} x \bigr) \, t^{-\mu} \, 
\bigl( \sigma_r(t)/t\bigr)^\mu \\ \nonumber
&=& u_{\gamma}\bigl( u t^{r+1-\langle \mu ,  \gamma \rangle  } x \bigr) \, t^{-\mu} \,  \bigl( \sigma_r(t)/t\bigr)^\mu.
\end{eqnarray}

\noindent It follows that $r =\langle \mu ,  \gamma \rangle -1$ 
if $x \not =0$.

\smallskip

\noindent{\it The rank one case.}
There exists a unique  root $\alpha $ satisfying
$\langle \mu , \alpha \rangle \geq 1$ and
we assume that $\langle \mu , \alpha \rangle \geq 2$ (since the case
$\langle \mu , \alpha \rangle =1$ is minuscule).
We have
$g= \bigl(  P(t) \bigr) \, t^\mu$ for some polynomial $P \in k[t]$
of degree $\leq  \langle \mu , \alpha \rangle -1$ satisfying $P(0)=0$.
For $r=m/n$, it follows that
$$
g^{-1} \, \sigma_r(g)=  t^{-\mu} \, u_{\alpha}(-P(t)) \,
u_{\alpha}( P(t(1+ut^r))\,  t^\mu \bigl( \sigma_r(t)/t\bigr)^\mu
=u_{\alpha}\Bigl(  \frac{P(t)- P(t(1+ut^r))}{t^{\langle \mu , \alpha \rangle}} \Bigr) \, \bigl( \sigma_r(t)/t\bigr)^\mu.
$$
It follows that
$$
r \geq r(g)  \enskip \Longleftrightarrow   \enskip
\frac{P(t)- P(t(1+ut^r))}{t^{\langle \mu , \alpha \rangle}} \in k^u[[t^{1/n}]].
$$
For example, for $P(t)=t$, we find that $r(g)= \langle \mu , \alpha \rangle$
and that $\res(g)=u_\alpha$.

\begin{scorollary}
With the preceding notations we  have $r(g)=0$ if and only if $P=0$.
\end{scorollary}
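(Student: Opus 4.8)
The plan is to invoke Lemma \ref{lem_0}, which identifies $\bigl\{g : r(g)\leq 0\bigr\}$ with $\bigl\{g : g^{-1}\,g(\lambda t)\in G(k[\lambda,\lambda^{-1}][[t]])\bigr\}$, rather than the $\sigma_r$--criterion used just above for $r>0$. Since $\mu>0$ we have $g\notin G(k[[t]])$, so $r(g)\geq 0$ and hence $r(g)=0$ is equivalent to $r(g)\leq 0$. Everything therefore reduces to showing that $g^{-1}\,g(\lambda t)$ is integral, i.e. lies in $G(k[\lambda,\lambda^{-1}][[t]])$, exactly when $P=0$.

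First I would compute $g^{-1}\,g(\lambda t)$ explicitly. Put $c=\langle\mu,\alpha\rangle$. Substituting $t\mapsto\lambda t$ gives $g(\lambda t)=u_\alpha\bigl(P(\lambda t)\bigr)\,\lambda^\mu\,t^\mu$, whence $g^{-1}\,g(\lambda t)=t^{-\mu}\,u_\alpha\bigl(P(\lambda t)-P(t)\bigr)\,\lambda^\mu\,t^\mu$ by commutativity of $U_\alpha$. Since $\lambda^\mu$ and $t^\mu$ both lie in the torus $T$ they commute, and $t^{-\mu}\,u_\alpha(x)\,t^{\mu}=u_\alpha\bigl(t^{-c}x\bigr)$, so the expression collapses to
\[
g^{-1}\,g(\lambda t)=u_\alpha\!\left(\frac{P(\lambda t)-P(t)}{t^{c}}\right)\lambda^\mu .
\]

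Now $\lambda^\mu\in T\bigl(k[\lambda,\lambda^{-1}]\bigr)\subset G\bigl(k[\lambda,\lambda^{-1}][[t]]\bigr)$ and $U_\alpha\cong\GG_a$ is a closed $k$--subgroup of $G$, so the product above is integral if and only if $w:=\frac{P(\lambda t)-P(t)}{t^{c}}$ lies in $k[\lambda,\lambda^{-1}][[t]]$. Writing $P(t)=\sum_{i=1}^{c-1}a_i t^{i}$ (recall $P(0)=0$ and $\deg P\leq c-1$), we obtain $w=\sum_{i=1}^{c-1}a_i(\lambda^{i}-1)\,t^{\,i-c}$, all of whose $t$--exponents $i-c$ are strictly negative. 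Integrality thus forces every coefficient $a_i(\lambda^{i}-1)$ to vanish in $k[\lambda,\lambda^{-1}]$; since $\lambda^{i}-1$ is a nonzero, hence non-zero-divisor, element of this domain for $i\geq 1$, we conclude $a_i=0$ for all $i$, i.e. $P=0$. The main obstacle is merely the careful bookkeeping of the two conjugations by $t^\mu$ and $\lambda^\mu$ inside the noncommutative group $G$; once the whole element has been pushed into the single root subgroup $U_\alpha$, the criterion becomes the transparent Laurent-series statement above, and working over the domain $k[\lambda,\lambda^{-1}]$ (rather than in the $\sigma_r$--picture, where characteristic-$p$ phenomena would intervene) keeps the argument clean and characteristic-free.
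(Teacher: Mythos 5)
Your proof is correct and takes essentially the same route as the paper: both reduce the statement to the integrality of $g^{-1}\,g(\lambda t)=u_\alpha\bigl((P(\lambda t)-P(t))/t^{\langle\mu,\alpha\rangle}\bigr)\,\lambda^\mu$ over $k[\lambda^{\pm 1}][[t]]$ and conclude from the degree bound $\deg P<\langle\mu,\alpha\rangle$ together with $P(0)=0$. Your version is simply more explicit in citing Lemma \ref{lem_0} and in extracting the coefficients $a_i(\lambda^i-1)$ of the strictly negative powers of $t$.
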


Note that this is coherent 
with Proposition \ref{prop_fixed}.

\begin{proof} If $P=0$, we have $g=t^\mu$ so that $r(g)=0$.
Conversely, we assume that $r(g)=0$ so that
$g^{-1} \, g(\lambda t) \in G\bigl( k[\lambda^{\pm 1}][[t]] \bigr)$.
The above computation with $\lambda=1+ut^0$ shows that
$$
g^{-1} \, g(\lambda t)
=u_{\alpha}\bigl( \frac{P(t)- P(\lambda t)}{t^{\langle \mu , \alpha \rangle}} \bigr)
\, \mu(\lambda)  \in G\bigl( k[\lambda^{\pm 1}][[t]] \bigr)
$$
Since $P$ is of degree $<\langle \mu , \alpha \rangle$,
we conclude that $P=0$.
\end{proof}

\medskip

\section{Appendix: descent}

\begin{slemma}\label{lem_descent}
Let  $S$ be a scheme. Let  $G$ be a  $S$--group scheme flat of finite type.
We are given a left action of $G$ on an $S$-scheme 
 $X$ of finite type such that $X$ admits a  $G$--linearized line 
 bundle $\mathcal{L}$ which is relatively ample over  $S$. 
Let  $T$ be an  $S$--scheme and let   $f: E \to T$ be a  $G_T$-torsor.
Then the contracted product  $E \wedge^G X_T$ over $T$ is representable 
by a  $T$--scheme. Furthermore we have

\begin{romlist}
 \item the line bundle
$\mathcal{M}=E \wedge^G \mathcal{L}$
on  $E \wedge^G X_T$  is relatively ample over   $T$.
 \item If  a  $S$-group scheme   $J$ acts  (on the left) on  $f: E \to T$
 such that   $T$ admits a $J$-linearized line bundle  which is  relatively ample
 over $S$, then   $E \wedge^G X_T$  admits a $J$-linearized line bundle
 which is  relatively ample  over $T$.
\end{romlist}

\end{slemma}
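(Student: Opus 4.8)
The plan is to reduce everything to the case of the trivial torsor by faithfully flat descent, the one nontrivial input being the effectivity of fppf descent for schemes carrying a relatively ample line bundle compatible with the descent datum. Since $f\colon E\to T$ is a torsor under the flat finite type group scheme $G_T$, it is itself an fppf covering of $T$, and there is an fppf covering $p\colon T'\to T$ together with a trivialization $E_{T'}\simlgr G_{T'}$ of $G_{T'}$--torsors. Base changing the diagonal $G_T$--action that defines the contracted product, one obtains canonical identifications $(E\wedge^G X_T)_{T'}\simlgr G_{T'}\wedge^{G_{T'}}X_{T'}\simlgr X_{T'}$, under which the candidate bundle $\mathcal{M}_{T'}$ corresponds to the pullback $\mathcal{L}_{T'}$ of $\mathcal{L}$. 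The two projections $T'\times_T T'\to T'$ equip $X_{T'}$ with a descent datum $\varphi$ encoding the torsor $E$, and the chosen $G$--linearization of $\mathcal{L}$ is exactly what makes $\mathcal{L}_{T'}$ a relatively ample line bundle compatible with $\varphi$.

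First I would invoke the effective descent theorem in the presence of a relatively ample line bundle (see \cite{St}): the descent datum $(X_{T'},\mathcal{L}_{T'},\varphi)$ is effective, so it descends to a pair $(Y,\mathcal{M})$ over $T$, and by construction $Y$ represents the fppf sheaf $E\wedge^G X_T$; this settles representability. For the relative ampleness claimed in (i), I would use that relative ampleness is local for the fppf topology on the base: as $\mathcal{M}_{T'}=\mathcal{L}_{T'}$ is the base change of the relatively ample $\mathcal{L}$ and hence relatively ample over $T'$, the descended bundle $\mathcal{M}$ is relatively ample over $T$. Finiteness of type of $Y\to T$ is inherited from $X\to S$ through the permanence properties of \cite{EGA4}.

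For (ii) the point is that the construction is functorial in the equivariant data. The action of $J$ on $f\colon E\to T$, combined with the action of $J$ on $X_T=X\times_S T$ through the factor $T$, makes $E\times_T X_T$ into a $J$--scheme whose $J$--action commutes with the diagonal $G_T$--action; passing to the $G_T$--quotient produces a $J$--action on $Y$. Since the projection $E\times_T X_T\to X$ is $J$--invariant, the pullback of $\mathcal{L}$ from which $\mathcal{M}$ is built is canonically $J$--linearized, and this linearization descends to $\mathcal{M}$. Thus $\mathcal{M}$ is already a $J$--linearized line bundle, relatively ample over $T$ by (i). The hypothesis that $T$ carries a $J$--linearized line bundle $\mathcal{N}$ relatively ample over $S$ is then used to rigidify the positivity over the base: for $n$ sufficiently large the tensor product $\mathcal{M}\otimes\pi^{*}\mathcal{N}^{\otimes n}$, with $\pi\colon Y\to T$, stays $J$--linearized, remains relatively ample over $T$ (because $\pi^{*}\mathcal{N}$ is trivial on the fibres of $\pi$), and in addition becomes relatively ample over $S$.

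The step I expect to be the main obstacle is the verification that descent is effective at this level of generality, and in particular the compatibility of $\mathcal{L}_{T'}$ with the descent datum $\varphi$. Concretely, one must check that the cocycle isomorphism defining $\varphi$ over $T'\times_T T'$ is matched by the linearization isomorphism of $\mathcal{L}$, so that $(\mathcal{L}_{T'},\varphi)$ is a genuine descent datum of line bundles; once the torsor is trivialized this is precisely the cocycle condition built into the notion of a $G$--linearization, so no positivity input beyond $\mathcal{L}$ (and $\mathcal{N}$ in (ii)) enters. I would also take care that the effective descent result is applied in a form valid without any quasi--projectivity assumption on $T$, relative ampleness of $\mathcal{M}$ being the only hypothesis needed.
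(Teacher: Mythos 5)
Your argument is correct and is essentially the paper's: the representability and part (i) are obtained in the paper by citing \cite[\S 10.2, lemme 6]{BLR}, whose proof is precisely the fppf-descent-along-the-torsor argument with a compatible relatively ample sheaf that you write out, and for (ii) the paper likewise twists $\mathcal{M}$ by a large power of $h^*(\mathcal{N}_0)$ and invokes \cite[4.6.13.(ii)]{EGA2}. Your observation that $\mathcal{M}$ is already canonically $J$-linearized (so that the twist only serves to improve ampleness relative to $S$) is implicit in, and consistent with, the paper's one-line treatment of (ii).
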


\medskip

\begin{proof}
The first part with property (i)  is  \cite[\S 10.2, lemme 6]{BLR}.
For establishig (ii), we are given a  $J$--linearized line bundle
$\mathcal{N}_0$ on  $T$ which is relatively ample over  $S$.
We consider the mapping $h: E \wedge^G X_T \to T$, 
we know that there exists a positive integer  $n$ such that 
the line bundle $\mathcal{N}=   h^*(\mathcal{N}_0)^{\otimes n}  \otimes 
\mathcal{M} $  on  
$E \wedge^G X_T \to T$ is relatively ample over $T$ \cite[4.6.13.(ii)]{EGA2}.
This line bundle is $J$--linearized as desired.
\end{proof}

\begin{slemma} \label{lem_artin} Let $S$ be a scheme and let $G$ be a flat  
$S$-group scheme locally of finite 
type. We are given a $G$--morphism of $S$-schemes $f: X \to Y$.
We assume that $G$ acts freely on $X$ and on $Y$, that $f$ is affine
and that the fppf quotient $Y/G$ is representable by a 
$S$--scheme. Then the  fppf quotient $X/G$ is representable
by an $S$-scheme.
\end{slemma}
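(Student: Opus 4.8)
The plan is to realize the projection $q\colon Y \to Y/G =: B$ as an fppf $G$-torsor and then to descend the affine $G$-equivariant morphism $f\colon X \to Y$ along it. Since $B$ is representable by an $S$-scheme by hypothesis, producing an affine $B$-scheme $Z$ together with a canonical isomorphism $Z \simlgr X/G$ will immediately yield that $X/G$ is representable by an $S$-scheme. So the whole argument reduces to (i) upgrading the bare representability of $B$ to the torsor property of $q$, and (ii) invoking effective descent for affine morphisms.

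First I would check that $q\colon Y \to B$ is a $G$-torsor for the fppf topology. Because $B$ is the fppf sheafification of the naive quotient, $q$ is an epimorphism of fppf sheaves, hence admits sections fppf-locally on $B$: there is an fppf cover $B' \to B$ and a section of $Y \times_B B' \to B'$. Combining this section with the freeness of the action on $Y$, the action map trivializes $Y \times_B B'$ into $G \times_S B'$; since $G \to S$ is flat and locally of finite type, this exhibits $q$, after the fppf base change $B' \to B$, as the flat and locally finitely presented projection $G \times_S B' \to B'$. By fppf descent of flatness and of local finite presentation, $q$ itself is flat and locally of finite presentation, so it is an fppf $G$-torsor (fppf-locally trivial). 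This is the step I expect to carry the weight of the argument, namely converting the mere representability of $Y/G$ together with the free action into genuine flatness of $q$ by means of fppf-local sections; everything afterwards is formal.

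With $q$ a $G$-torsor, the canonical identification $Y \times_B Y \simlgr G \times_S Y$ identifies a $G$-equivariant structure on the $Y$-scheme $f\colon X \to Y$ with fppf descent data for $X$ relative to $q$, the cocycle condition corresponding exactly to the action axioms. Since $f$ is affine, descent is effective — affine morphisms, equivalently quasi-coherent sheaves of $\mathcal{O}$-algebras, descend along fppf covers — so there is an affine morphism $h\colon Z \to B$, unique up to canonical isomorphism, together with a $G$-equivariant $Y$-isomorphism $Z \times_B Y \simlgr X$, where $G$ acts on $Z \times_B Y$ only through the second factor. Being affine over the $S$-scheme $B$, the scheme $Z$ is an $S$-scheme. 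Finally I would identify $Z$ with the quotient: the projection $Z \times_B Y \to Z$ is the pullback of the $G$-torsor $Y \to B$ along $h$, hence itself a $G$-torsor, so its fppf-sheaf quotient by $G$ is $Z$. Transporting along $Z \times_B Y \simlgr X$ gives $X/G \cong (Z \times_B Y)/G \cong Z$, and therefore $X/G$ is represented by the $S$-scheme $Z$, as desired.
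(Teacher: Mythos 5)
Your proof is correct and follows essentially the same route as the paper: both arguments descend the affine morphism $f\colon X\to Y$ along the torsor $Y\to Y/G$, using that affine morphisms satisfy effective descent, and then identify the descended scheme with $X/G$. The only cosmetic differences are that you phrase the $G$-equivariant structure directly as a descent datum via $Y\times_{Y/G}Y\cong G\times_S Y$ (where the paper first treats the trivial-torsor case by hand via $X_0=f^{-1}(\{e\}\times Z)$ and then descends), and that you take the extra care of verifying that $Y\to Y/G$ is indeed an fppf torsor, a point the paper takes for granted.
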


\begin{proof}
Put $Z:=Y/G$.
Assume first that the $G$-torsor $Y \lra Z$ is trivial. Equivalently, the $G$-scheme $Y$ is isomorphic, over $S$, to $G \times_S Z$. Choosing such an isomorphism allows to consider $f$ as a $G$-morphism $X \lra G \times_S Z$. Put $X_0:=f^{-1}( {e} \times_S Z)$; it is a closed subscheme of $X$, which is transverse to the $G$-action: the natural $G$-morphism $G \times_S X_0 \lra X$ is an isomorphism. Thus, $X/G$ is represented by $X_0$.\\
For the general case, we proceed by descent. Considering $f$ as a morphism of $Z$-schemes, we may replace $S$ by $Z$ (and $G$ by $G \times_S Z$), and assume for simplicity that $Z=S$. Hence, $Y$ is a $G$-torsor over $S$. Put $S':=Y$. We are going to base-change the situation via the morphism $S' \lra S$. Put \[ f':=f \times_S S': X':=X \times_S S' \lra Y':=Y \times_S S'.\] Now, the $G$-torsor $Y' \lra S'$ is trivial, so that the quotient $X' \lra X'/G$ exists (and is a trivial $G$-torsor) by the discussion above. Since $X' \longrightarrow Y'$ is affine,  $X'/G \lra Y'/G=S'$ is affine as well.  It is equipped with a canonical descent data for the fpqc morphism $S' \lra S$. Hence, this data is effective (by fpqc descent for affine schemes), yielding an arrow $\tilde X \lra S$. Now, using descent for morphisms, the $S'$-arrow  $X' \lra X'/G$ descends to an $S$-arrow $X \lra \tilde X$. This is the quotient $X \lra X/G$ we sought for. This fact can, again, be checked by descent.
\end{proof}

\bigskip

\bigskip

\medskip

\end{document}